\numberwithin{equation}{section} 
\numberwithin{figure}{section} 
  \theoremstyle{plain}
  \newtheorem{thm}{Theorem}[section]
  \theoremstyle{remark}
  \newtheorem*{summary*}{Summary}
 \theoremstyle{definition}
  \newtheorem{example}[thm]{Example}
  \theoremstyle{plain}
  \newtheorem{prop}[thm]{Proposition}
  \theoremstyle{definition}
  \newtheorem{defn}[thm]{Definition}
  \theoremstyle{remark}
  \newtheorem{rem}[thm]{Remark}
  \theoremstyle{plain}
  \newtheorem{lem}[thm]{Lemma}
  \theoremstyle{plain}
  \newtheorem{cor}[thm]{Corollary}
\begin{document}

\title{Sur le spectre semi-classique d'un système intégrable de dimension
1 autour d'une singularité hyperbolique}

\author{Olivier Lablée}

\date{17 Novembre 2008}

\maketitle
\begin{summary*}
Dans cette article on décrit le spectre semi-classique d'un opérateur
de Schrödinger sur $\mathbb{R}$ avec un potentiel type double puits.
La description qu'on donne est celle du spectre autour du maximun
local du potentiel. Dans la classification des singularités de l'application
moment d'un système intégrable, le double puits représente le cas
des singularités non-dégénérées de type hyperbolique.
\end{summary*}

\section{Introduction}

Sur la variété $M=\mathbb{R}$, l'opérateur de Schrödinger \foreignlanguage{english}{$P_{h}$}
de potentiel $V$, $V$ étant une fonction de $\mathbb{R}$ dans $\mathbb{R}$,
est l'opérateur linéaire non-borné sur l'espace des fonctions $\mathcal{C}^{\infty}$
à support compact $\mathcal{C}_{c}^{\infty}(\mathbb{R},\mathbb{R})$
définit par :\[
P_{h}:=-\frac{h^{2}}{2}\Delta+V,\]
où $V$ est l'opérateur de multiplication par la fonction $V$, le
laplacien est donné par $\Delta=\frac{d^{2}}{dx^{2}}$ et $h$ est
le paramètre semi-classique. Le spectre semi-classique d'un opérateur
de Schrödinger en dimension un est bien connu \textbf{{[}20]}, \textbf{{[}21]}
et \textbf{{[}11]} dans les zones dites elliptiques, c'est-à-dire
en dehors des maxima locaux de la fonction potentiel $V$; on parle
alors de règles de Bohr-Sommerfeld régulières. Dans cet article on
se concentre sur le cas d'un opérateur de Schrödinger avec un potentiel
type double puits, c'est-à-dire que $V\in\mathcal{C}^{\infty}(\mathbb{R})$
avec ${\displaystyle \lim_{|x|\rightarrow\infty}V(x)=+\infty}$ et
$V$ possédant exactement un maximum local non-dégénéré, que l'on
supposera par exemple atteint en $0$. Le modèle du double puits à
été beaucoup étudié \textbf{{[}3]}, \textbf{{[}22]} cependant son
spectre reste globalement assez mystérieux. Dans l'étude des singularités
de l'application moment d'un système complètement intégrable, l'opérateur
de Schrödinger avec double puits est le modèle type pour les singularités
non-dégénérées de type hyperbolique\textbf{ {[}34]}. En effet, pour
un hamiltonien $p\,:\, M\rightarrow\mathbb{R}$ tel que $0$ soit
valeur critique de $p$, et tel que les fibres dans un voisinage de
$0$ soient compactes et connexes et ne contiennent qu'un unique point
critique non-dégénéré de type hyperbolique: la fibre $\Lambda_{0}:=p^{-1}(0)$
est alors un ''huit'' et le feuilletage dans un voisinage de la
fibre singulière $\Lambda_{0}$ est difféomorphe à celui du double
puits.

\begin{center}
\includegraphics[scale=0.54]{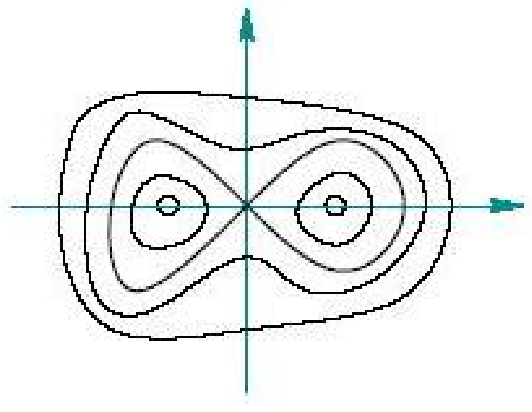}
\par\end{center}

\begin{center}
\textit{Fig. 1: Le feuilletage autour d'une singularité hyperbolique
dans l'espace de phases.}
\par\end{center}

\vspace{+0.25cm}

Dans une série de trois articles \textbf{{[}8]}, \textbf{{[}9]} et
\textbf{{[}10]} Y. Colin de Verdière et B. Parisse se sont intéressés
au spectre semi-classique de l'opérateur de Schrödinger, en dimension
1 avec un potentiel ayant un maximun local non-dégénéré. Dans \textbf{{[}10]}
ils traitent de manière générale l'étude des singularités. Dans \textbf{{[}8]}
et \textbf{{[}9]} les deux auteurs donnent une condition nécessaire
et suffisante pour trouver le spectre semi-classique dans un compact
de diamètre $h$ centré autour de l'origine de l'opérateur linéaire
: \[
P_{h}:=-\frac{h^{2}}{2}\frac{d^{2}}{dx^{2}}+V\]
 avec un potentiel $V$ type double puits. 

Dans la première partie de cet article, on fait quelques rappels sur
les outils semi-classiques. Dans la partie suivante, on rappelle la
formule donnée par Y. Colin de Verdière et B. Parisse. Dans la dernière
partie on utilise cette formule pour expliciter, dans une certaine
mesure, le spectre de l'opérateur ${\displaystyle P_{h}}$. On montre
en particulier que le spectre de l'opérateur $P_{h}$ dans le compact
$\left[-\sqrt{h},\sqrt{h}\right]$ est constitué de deux familles
de réels en quinconce, et que l'interstice spectral est de l'ordre
de $O(h/\left|\ln(h)\right|)$: voir le théorème 4.1.

\section{Préliminaires }

Dans toute cette section $X$ est une variété différentielle lisse
de dimension $n$ et on notera souvent par $M:=T^{*}X$ la variété
symplectique associée.

\subsection{Outils semi-classiques}

Pour le lecteur qui voudrait en savoir plus sur l'analyse semi-classique,
on conseille par exemple la littérature suivante : Y. Colin de Verdière
\textbf{{[}12]}, Dimassi-Sjöstrand \textbf{{[}14]}, L. Evans et M.
Zworski \textbf{{[}15]}, A. Martinez \textbf{{[}27]}, D. Robert \textbf{{[}30]},
S. Vu Ngoc \textbf{{[}34].} 

Sur la variété $X:=\mathbb{R}^{n}$, et pour $k,m\in\mathbb{Z}^{2}$,
on définit l'ensemble de symboles d'indice $k$ et de poids $\left\langle z\right\rangle ^{m}$
sur la variété $X$ où $\left\langle z\right\rangle =\left\langle x,\xi\right\rangle :=\left(1+|z|^{2}\right)^{\frac{1}{2}}$,
par :\[
S^{k}\left(X,\left\langle z\right\rangle ^{m}\right)\]
\[
:=\left\{ a_{h}(z)\in\mathcal{C}^{\infty}\left(T^{*}X\right),\,\forall\alpha\in\mathbb{N}^{n},\,\exists C_{\alpha}\geq0,\,\forall z\in T^{*}X,\,\left|{\displaystyle \partial_{z}^{\alpha}}a_{h}(z)\right|\leq C_{\alpha}h^{k}\left\langle z\right\rangle ^{m}\right\} .\]
De manière très formelle, la quantification de Weyl consiste à associer
à une fonction symbole $a_{h}:\,(x,\xi)\mapsto a_{h}(x,\xi)\in S^{k}\left(\left\langle z\right\rangle ^{m}\right)$
un opérateur linéaire $O_{p}^{w}(a_{h})$ de l'espace de Schwartz
$\mathcal{S}(X)$ dans lui même et admettant une représentation intégrale
: pour toute fonction $u\in\mathcal{S}(X)$ et pour tout $x\in X$:\[
O_{p}^{w}(a_{h})(u)(x):=\frac{1}{(2\pi h)^{n}}\int\int_{T^{*}X}e^{\frac{i}{h}(x-y)\xi}a\left(\frac{x+y}{2},\xi\right)u(y)\, dyd\xi.\]

\begin{example}
\textit{Le quantifié de Weyl de la fonction $(x,\xi)\mapsto x_{j}$
est l'opérateur de multiplication par la variable $x_{j}$. Le quantifié
de Weyl de la fonction $(x,\xi)\mapsto\xi_{j}$ est l'opérateur de
dérivation $-ih\frac{\partial}{\partial x_{j}}.$}

Rappelons aussi la :
\end{example}
\begin{prop}
Pour deux symboles $a_{h}$ et $b_{h}$ nous avons que :\textup{\[
O_{p}^{w}(a_{h}b_{h})=O_{p}^{w}(a_{h})\circ O_{p}^{w}(b_{h})+O(h)\]
\[
O_{p}^{w}\left\{ a_{h},b_{h}\right\} =\left[O_{p}^{w}(a_{h}),O_{p}^{w}(b_{h})\right]+O(h^{2})\]
}$\left\{ .,.\right\} $ étant les crochets de Poisson et $[.,.]$
le commutateur.
\end{prop}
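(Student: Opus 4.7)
La stratégie consiste à ramener les deux identités à un unique développement asymptotique, celui du produit de Moyal. En partant de la définition intégrale rappelée plus haut, on calcule d'abord l'action de la composition $O_p^w(a_h)\circ O_p^w(b_h)$ sur une fonction $u\in\mathcal{S}(X)$, en insérant successivement les représentations intégrales de $a_h$ et $b_h$. Après un changement de variables judicieux (typiquement passage aux variables milieu et différence), on aboutit à une représentation du type $O_p^w(a_h\#_h b_h)$ où le symbole composé s'écrit comme une intégrale oscillante
\[
(a_h\#_h b_h)(z) = \frac{1}{(\pi h)^{2n}}\int\int_{T^*X\times T^*X} e^{\frac{2i}{h}\sigma(z_1,z_2)}\, a_h(z+z_1)\, b_h(z+z_2)\, dz_1\, dz_2,
\]
où $\sigma$ désigne la forme symplectique canonique sur $T^*X$.

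Dans un deuxième temps, j'appliquerais la méthode de la phase stationnaire à cette intégrale. Le point critique de la phase étant $(z_1,z_2)=(0,0)$, un développement de Taylor de $a_h(z+z_1)b_h(z+z_2)$ à cet ordre, combiné à l'évaluation des intégrales gaussiennes associées, fournit le développement fondamental
\[
a_h\#_h b_h = a_h b_h + \frac{ih}{2}\{a_h,b_h\} + O(h^2).
\]
La première identité de la proposition découle alors directement du terme d'ordre $0$. La seconde s'obtient par antisymétrisation en $a_h,b_h$ : en formant $a_h\#_h b_h - b_h\#_h a_h$, le terme principal $a_h b_h$ disparaît, le terme en crochet de Poisson double, et un argument de parité dans le développement de phase stationnaire élimine le terme suivant, de sorte que $[O_p^w(a_h),O_p^w(b_h)] = ih\, O_p^w\{a_h,b_h\} + O(h^3)$, ce qui redonne, à normalisation près, la formule annoncée.

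La difficulté principale est la justification rigoureuse du développement de phase stationnaire pour des symboles appartenant à la classe à poids $S^k(\langle z\rangle^m)$ : il faut contrôler les restes uniformément en $(h,z)$, ce qui requiert les estimations non-stationnaires standard (intégrations par parties loin du point critique pour gagner des puissances de $h$) couplées au lemme d'intégrale gaussienne oscillante. Cette partie, non spécifique au contexte du double puits étudié ici, est traitée en détail dans les traités classiques de calcul pseudo-différentiel semi-classique (Dimassi-Sjöstrand \textbf{[14]}, Martinez \textbf{[27]}), auxquels on pourrait se contenter de renvoyer.
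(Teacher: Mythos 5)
Le papier ne d\'emontre pas la Proposition~2.2 : elle y figure comme rappel standard d'analyse semi-classique, sans preuve, le texte renvoyant globalement aux r\'ef\'erences classiques (Dimassi--Sj\"ostrand \textbf{[14]}, Martinez \textbf{[27]}, etc.). Il n'y a donc pas de preuve interne \`a laquelle comparer la v\^otre ; votre esquisse via le produit de Moyal $\#_h$ et la phase stationnaire est pr\'ecis\'ement l'argument que donnent ces r\'ef\'erences, et elle est correcte dans sa structure : repr\'esentation int\'egrale du symbole compos\'e, d\'eveloppement de phase stationnaire au point critique $(z_1,z_2)=(0,0)$, lecture des termes d'ordre $0$ et $1$, puis antisym\'etrisation et argument de parit\'e pour le commutateur. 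Vous identifiez aussi correctement la seule vraie difficult\'e technique, \`a savoir la justification uniforme des restes dans la classe \`a poids $S^k(\langle z\rangle^m)$.

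Deux petites remarques. D'abord, l'\'enonc\'e du papier est mal normalis\'e : tel qu'\'ecrit, $\left[O_p^w(a_h),O_p^w(b_h)\right]$ est d\'ej\`a $O(h)$, donc l'\'egalit\'e $O_p^w\{a_h,b_h\}=[\cdot,\cdot]+O(h^2)$ ne peut \^etre lue que comme un raccourci pour $\frac{i}{h}\left[O_p^w(a_h),O_p^w(b_h)\right]=O_p^w\{a_h,b_h\}+O(h^2)$. Votre version $\left[O_p^w(a_h),O_p^w(b_h)\right]=\frac{h}{i}\,O_p^w\{a_h,b_h\}+O(h^3)$ (vous \'ecrivez $ih$, question de convention de signe pour la transform\'ee de Fourier semi-classique et le crochet de Poisson) est en fait l'\'enonc\'e correct, et c'est ce qu'on obtient effectivement par parit\'e : c'est plus fort que ce que demande le papier apr\`es renormalisation. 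Ensuite, pour \^etre compl\`etement honn\^ete dans une r\'edaction finale, il faudrait pr\'eciser que le gain d'un ordre par parit\'e ($O(h^3)$ au lieu de $O(h^2)$) repose sur le fait que c'est bien la quantification de \emph{Weyl} qui est utilis\'ee (sym\'etrisation en $(x+y)/2$) ; avec une quantification gauche ou droite l'argument de parit\'e \'echoue et on ne gagne qu'un ordre. Mais ceci est coh\'erent avec le choix $O_p^w$ du papier, donc aucune lacune r\'eelle.
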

Un opérateur linéaire $A$ est un opérateur pseudo-différentiel si
et seulement s'il existe un symbole $a_{h}$ tel que $A=O_{p}^{w}(a_{h}).$

En analyse semi-classique, on est aussi amené à considérer des symboles
ayant des développements asymptotiques en puissance de $h$: soit
$a_{h}\in S^{0}\left(X,\left\langle z\right\rangle ^{m}\right)$,
on dira que ce symbole est classique si et seulement s'il existe une
suite de symboles $\left(a_{j}\right)_{j\in\mathbb{N}}\in S^{0}\left(X,\left\langle z\right\rangle ^{m}\right)^{\mathbb{N}}$
indépendant de $h$ tels que pour tout $k^{\prime}\geq0$, on ait
:\[
\left(a_{h}(z)-{\displaystyle \sum_{j=0}^{k^{\prime}}a_{j}(z)h^{j}}\right)\in S^{k^{\prime}+1}\left(X,\left\langle z\right\rangle ^{m}\right).\]
On note alors $a_{h}={\displaystyle \sum_{j=0}^{+\infty}a_{j}h^{j}}$,
on dira aussi que $a_{0}$ est le symbole principal de $a_{h}$. 

Pour finir sur les symboles, on dit qu'un symbole $a\in S\left(X,\left\langle z\right\rangle ^{m}\right)$
est elliptique en $(x_{0},\xi_{0})\in T^{*}X$ si et seulement si
$|a(x_{0},\xi_{0})|\neq0$.

\subsection{Outils microlocaux}

De manière générale, sur une variété riemannienne $(X,g)$ complète
connexe, l'asymptotique du spectre de l'opérateur de Schrödinger\textit{\[
P_{h}:=-\frac{h^{2}}{2}\Delta_{g}+V\]
}ou plus généralement d'un opérateur pseudo-différentiel, est remarquablement
liée à une géométrie sous-jacente. Celle-ci vit sur le fibré cotangent
$T^{*}X$, vu comme une variété symplectique%
\footnote{Rappelons que le fibré cotangent d'une variété différentiable est
naturellement muni d'une structure symplectique. En effet, pour toute
variété $M$ lisse de dimension $n$, on peut munir de façon intrinsèque
son fibré cotangent $T^{*}M$ d'une structure de variété symplectique
$\left(T^{*}M,\omega\right)$ de dimension $2n$ définie par la différentielle
extérieure $\omega=d\alpha$ de la 1-forme de Liouville $\alpha$.%
}: c'est la géométrie de l'espace des phases. C'est d'ailleurs le même
phénomène qui permet de voir la mécanique classique (structure de
variété symplectique) comme limite de la mécanique quantique (structure
d'algèbre d'opérateurs). On est ainsi amené à définir une notion de
localisation dans l'espace des phases.

Donnons ici quelques éléments d'analyse microlocale, pour plus de
détails voir par exemple\textbf{ {[}33], {[}34]} ou \textbf{{[}12]}. 

Pour $h_{0}>0$ fixé, l'ensemble $A:=\left\{ \lambda(h)\in\mathbb{C}^{\left]0,h_{0}\right]},\,\exists N\in\mathbb{Z},\;\left|\lambda(h)\right|=O(h^{-N})\right\} $
est un anneau commutatif pour les opérations usuelles sur les fonctions.
On voit aussi sans peine que $I:=\left\{ \lambda(h)\in A,\,\lambda(h)=O(h^{\infty})\right\} $
est un idéal bilatère de $A$, on définit alors l'anneau $\mathbb{C}_{h}$
des constantes admissibles comme étant l'anneau quotient $A/I$.

On peut alors définir le $\mathbb{C}_{h}$-module des fonctions admissibles
:

\begin{defn}
L'ensemble $\mathcal{A}_{h}(X)$ des fonctions admissibles sur $X$
est l'ensemble des distributions $u_{h}\in\mathcal{D}^{\prime}(X)$
tels que pour tout opérateur pseudo-différentiel $P_{h}$ dont le
symbole dans une carte locale est a support compact\[
\exists N\in\mathbb{Z},\;\left\Vert P_{h}u_{h}\right\Vert _{L^{2}(X)}=O(h^{N}).\]
L'ensemble $\mathcal{A}_{h}(X)$ est un $\mathbb{C}_{h}$-module pour
les lois usuelles des fonctions. Un premier fait important est que
par le théorème de Calderon-Vaillancourt, on a l'inclusion : $L^{2}(X)\subset\mathcal{A}_{h}(X).$
\end{defn}
\begin{example}
Les fonctions WKB%
\footnote{Pour Wentzel, Kramers et Brillouin.%
} de la forme : \[
u_{h}(x)={\displaystyle \alpha(x)e^{i\frac{S(x)}{h}}}\]
$S$ étant une fonction réelle $\mathcal{C}^{\infty}$, sont des fonctions
admissibles stables par l'action d'un opérateur pseudo-différentiel.
\end{example}
A tout élément $u_{h}$ du $\mathbb{C}_{h}$-module des fonctions
admissibles est associé un sous-ensemble de $T^{*}X$, cet ensemble,
nommé micro-support%
\footnote{Ou front d'ondes.%
} décrit la localisation de la fonction $u_{h}$ dans l'espace des
phases.

\begin{defn}
Soit $u_{h}\in\mathcal{A}_{h}(X)$, on dira que $u_{h}$ est négligeable
au point $m\in T^{*}X$, si et seulement s'il existe $P_{h}$ un opérateur
pseudo-différentiel elliptique en $m$ tels que :

\[
\left\Vert P_{h}u_{h}\right\Vert _{L^{2}(X)}=O(h^{\infty}).\]

\end{defn}
On définit alors $MS(u_{h})$, le micro-support de $u_{h}$ comme
le complémentaire dans $T^{*}X$ de l'ensemble des points $m\in T^{*}X$
où $u_{h}$ est négligeable. Parmi les propriétés liées au micro-support
nous avons que si $P_{h}$ est un opérateur pseudo-différentiel de
symbole principal $p$ alors on a l'implication : \[
P_{h}u_{h}=O(h^{\infty})\Rightarrow MS(u_{h})\subset p^{-1}(0).\]
Donc si par exemple $P_{h}$ est un opérateur de symbole principal
$p$, $\lambda$ un scalaire, et si $u_{h}$ est une fonction non
nulle telle que : $\left(P_{h}-\lambda I_{d}\right)u_{h}=O(h^{\infty})$
alors $MS(u_{h})\subset p^{-1}(\lambda).$ Ceci est une propriété
fondamentale de l'analyse microlocale: elle donne une localisation
des fonctions propres dans l'espace des phases.

\begin{example}
Pour une fonction WKB : $u_{h}(x)={\displaystyle {\displaystyle \alpha(x)e^{i\frac{S(x)}{h}}}}$
on a : \[
MS(u_{h})=\left\{ \left(x,dS(x)\right),\alpha(x)\neq0\right\} .\]

\end{example}
\begin{defn}
Soient $u_{h},v_{h}\in\mathcal{A}_{h}(X)^{2}$, on dira que $u_{h}=v_{h}+O(h^{\infty})$
sur un ouvert $U\subset T^{*}X$ si et seulement si :\[
MS(u_{h}-v_{h})\cap U=\textrm{\O}.\]

\end{defn}
Avec les propriétés du micro-support, on peut montrer que pour tout
ouvert $U$ de $T^{*}X$, l'ensemble $\left\{ u_{h}\in\mathcal{A}_{h}(X)/MS(u_{h})\cap U=\textrm{\O}\right\} $
est un $\mathbb{C}_{h}-$sous-module de $\mathcal{A}_{h}(X)$, on
peut alors définir l'espace des micro-fonctions :

\begin{defn}
Soit $U$ un ouvert non vide de $T^{*}X$, on définit l'espace des
micro-fonctions sur $U$ comme étant le $\mathbb{C}_{h}-$module quotient:\[
\mathcal{M}_{h}(U):=\mathcal{A}_{h}(X)/\left\{ u_{h}\in\mathcal{A}_{h}(X),\, MS(u_{h})\cap U=\textrm{\O}\right\} .\]

\end{defn}
Les opérateurs pseudo-différentiels agissent sur $\mathcal{M}_{h}(U)$,
en effet : pour tout opérateur pseudo-différentiel $P_{h}$ on a $MS(P_{h}u_{h})\subset MS(u_{h})$
et ainsi $P_{h}\left(\mathcal{M}_{h}(U)\right)\subset\mathcal{M}_{h}(U).$

A tout triplet $\left(P_{h},\lambda,U\right)$ où $P_{h}$ est un
opérateur pseudo-différentiel, $\lambda$ un scalaire de l'anneau
$\mathbb{\mathbb{C}}_{h}$ et $U$ un ouvert non vide de $T^{*}X$,
on peut associer l'ensemble $\mathcal{L}\left(P_{h},\lambda,U\right)$
des microfonctions $u_{h}$ solutions dans l'ouvert $U$ de $\left(P_{h}-\lambda I_{d}\right)u_{h}=O(h^{\infty})$.
L'ensemble $\mathcal{L}\left(P_{h},\lambda,U\right)$ est un $\mathbb{C}_{h}$-module,
et si $\Omega$ désigne un ensemble d'indices quelconque, la famille
d'ensembles $\left\{ \mathcal{L}\left(P_{h},\lambda,U_{x}\right),\, x\in\Omega\right\} $
est un faisceau au dessus de ${\displaystyle \bigcup_{x\in\Omega}}U_{x}$.
En effet toute solution peut être restreinte sur des ouverts plus
petits d'une unique manière, et deux solutions $u_{h}$ définie sur
un ouvert $U_{x}$ et $v_{h}$ définie sur un autre ouvert $U_{y}$
et telles que $u_{h}=v_{h}$ sur l'ouvert $U_{x}\cap U_{y}$ peuvent
être misent ensemble pour former une solution globale sur l'ouvert
$U_{x}\cup U_{y}$. Ce faisceau est supporté%
\footnote{Au sens du micro-support.%
} sur l'ensemble $p^{-1}(\lambda)\subset T^{*}X$.

\subsection{Systèmes intégrables semi-classiques}

Un système intégrable classique est la donnée d'une variété symplectique
$\left(M,\omega\right)$ de dimension $2n$ et de $n$ fonctions $\left(f_{1},...,f_{n}\right)$
de l'algèbre $\mathcal{C}^{\infty}(M)$ telles que les différentielles
$\left(df_{i}(x)\right)_{i=1,...,n}$ sont libres presque partout
sur $M$, et telles que pour tout indices $i,j$ on ait $\left\{ f_{i},f_{j}\right\} =0$.
On définit aussi l 'application moment classique associée:\[
\mathbf{f}\,:\left\{ \begin{array}{cc}
M\rightarrow\mathbb{R}^{n}\\
\\x\mapsto\left(f_{1}(x),...,f_{n}(x)\right).\end{array}\right.\]
Un système intégrable semi-classique sur une variété $X$ est la donné
de $n$ opérateurs pseudo-différentiels $P_{1},...,P_{n}$ sur $L^{2}(X)$
tels que pour tout indices $i$ et $j$ on ait $\left[P_{i},P_{j}\right]=O(h^{\infty})$
et dont les symboles principaux forment un système intégrable sur
$M:=T^{*}X$. On notera par $\mathbf{P}:=(P_{1},...,P_{n})$ l'application
moment quantique et par $\mathbf{p}:=(p_{1},..,.p_{n})$ l'application
moment classique associée aux symboles principaux de $\mathbf{P}.$
Les points réguliers de l'application moment classique sont les points
$m\in M$ tels que les différentielles $\left(dp_{i}(m)\right)_{i=1,...,n}$
sont libres. Les points réguliers d'un système intégrable ont une
description symplectique locale simple donnée par le théorème de Darboux-Carathéodory
(voir par exemple \textbf{{[}2]}). Une fibre $\Lambda_{c}:=\mathbf{p}^{-1}(c)$
est régulière si et seulement si tous les points de $\Lambda_{c}$
sont réguliers pour $\mathbf{p}.$ Les fibres régulières sont décrites
par le théorème actions-angles, nommé aussi théorème d'Arnold-Liouville-Mineur,
qui donne la dynamique classique au voisinage d'une fibre régulière
connexe et compacte : le flot hamiltonien associé à une intégrale
première est quasi-périodique (droites s'enroulant sur un tore). Ces
deux théorèmes ont un analogue semi-classique : Y. Colin de Verdière
pour le théorème de Darboux-Carathéodory \textbf{{[}7]} et S. Vu Ngoc
pour le théorème actions-angles \textbf{{[}31]},\textbf{{[}32]}. Le
théorème de Darboux-Carathéodory semi-classique permet de faire une
description précise de l'ensemble des micro-solutions des équations
$P_{j}u_{h}=O(h^{\infty})$. Les résultats d'analyse microlocale nous
informe déjà que les solutions $u_{h}$ sont localisées sur ${\displaystyle \bigcap_{j=1}^{n}}p_{j}^{-1}(0)$;
mais en fait on a bien mieux :

\begin{prop}
Pour tout point $m\in M$ régulier de $\mathbf{p}=(p_{1},...p_{n})$
tel que $\mathbf{p}(m)=0$, le faisceau des microsolutions de l'équation
:\[
P_{j}u_{h}=O(h^{\infty})\textrm{ près de }m\]
est un faisceau en $\mathbb{C}_{h}$-module libre de rang $1$ engendré
par $U^{-1}(\mathbf{1})$ où $U$ est un opérateur intégral de Fourier
et $\mathbf{1}$ est une microfonction égale à $1$ près de l'origine. 
\end{prop}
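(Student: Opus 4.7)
L'idée est d'utiliser la version semi-classique du théorème de Darboux-Carathéodory démontrée par Y. Colin de Verdière dans \textbf{[7]} pour se ramener à un système modèle local dont on sait résoudre explicitement l'équation des microsolutions, puis de transporter le résultat par conjugaison.

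Puisque $m$ est un point régulier de $\mathbf{p}$, les symboles principaux $p_{1},\ldots,p_{n}$ sont fonctionnellement indépendants en $m$ et commutent deux à deux pour le crochet de Poisson (conséquence de $[P_{i},P_{j}]=O(h^{\infty})$). Le théorème de Darboux-Carathéodory semi-classique fournit alors un opérateur intégral de Fourier elliptique $U$, défini microlocalement près de $m$ et envoyant $m$ sur l'origine de $T^{*}\mathbb{R}^{n}$, tel que, pour chaque $j$,
\[
U P_{j} U^{-1} = -ih\frac{\partial}{\partial x_{j}} + O(h^{\infty})
\]
microlocalement près de l'origine. On est ainsi ramené à l'étude du système modèle formé des opérateurs $-ih\partial_{x_{j}}$.

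L'étape suivante consiste à décrire les microsolutions $v_{h}$ du système modèle $-ih\partial_{x_{j}}v_{h}=O(h^{\infty})$ pour tout $j$. Les propriétés du micro-support entraînent $MS(v_{h})\subset\{\xi=0\}$, et en intégrant successivement chacune des équations on montre qu'il existe une unique constante admissible $c\in\mathbb{C}_{h}$ telle que $v_{h}=c\cdot\mathbf{1}+O(h^{\infty})$ près de l'origine. Autrement dit, le faisceau des microsolutions du système modèle est libre de rang $1$, engendré par $\mathbf{1}$. L'ellipticité de $U$ en fait un isomorphisme de faisceaux de microfonctions commutant (à $O(h^{\infty})$ près) avec le passage aux noyaux des systèmes considérés ; le transport par $U^{-1}$ livre donc le résultat annoncé.

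La difficulté principale est de nature technique et tient à l'étape de réduction : il s'agit de redresser \emph{simultanément} les $n$ opérateurs $P_{j}$ à leur forme normale par un unique opérateur intégral de Fourier $U$. La construction procède par récurrence sur $n$, en traitant un opérateur à la fois grâce à l'intégration du flot hamiltonien semi-classique, et exploite de façon essentielle le caractère involutif de la famille pour propager la compatibilité à chaque pas. Une fois cette réduction acquise, les étapes restantes — résolution explicite du système modèle et transport via un opérateur elliptique — relèvent d'une vérification directe.
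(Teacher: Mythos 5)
Le texte ne d\'emontre pas cette proposition~: elle est simplement \'enonc\'ee, imm\'ediatement apr\`es la mention du th\'eor\`eme de Darboux-Carath\'eodory semi-classique, avec renvoi aux r\'ef\'erences \textbf{[7]}, \textbf{[31]}, \textbf{[32]}. Votre esquisse est donc une reconstruction, et elle suit bien la strat\'egie que le texte sugg\`ere implicitement~: redresser simultan\'ement les $P_{j}$ au syst\`eme mod\`ele $\frac{h}{i}\partial_{x_{j}}$ au moyen d'un op\'erateur int\'egral de Fourier $U$ quantifiant les coordonn\'ees de Darboux-Carath\'eodory, r\'esoudre explicitement le syst\`eme mod\`ele (les microsolutions pr\`es de l'origine sont exactement les multiples admissibles de $\mathbf{1}$), puis transporter la description par l'isomorphisme de microfonctions $U^{-1}$. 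Le c{\oe}ur de l'argument --- construire $U$ par r\'ecurrence sur $n$ en int\'egrant les flots hamiltoniens semi-classiques et en propageant la compatibilit\'e gr\^ace au caract\`ere involutif --- est bien identifi\'e.

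Une imp\'ecision technique m\'erite d'\^etre signal\'ee~: le th\'eor\`eme de Darboux-Carath\'eodory semi-classique ne donne g\'en\'eralement pas une conjugaison pure $U P_{j}U^{-1}=-ih\partial_{x_{j}}+O(h^{\infty})$, mais une relation de la forme $M_{j}\circ U^{-1}P_{j}U=\frac{h}{i}\partial_{x_{j}}$ microlocalement pr\`es de l'origine, o\`u les $M_{j}$ sont des op\'erateurs pseudo-diff\'erentiels elliptiques~; c'est exactement le parall\`ele du cas singulier que le texte utilise au th\'eor\`eme 3.4, o\`u la forme normale s'\'ecrit $U^{-1}(P_{h}-EI_{d})U=N(\widehat{x\xi}-\varepsilon(E)I_{d})$ avec $N$ elliptique. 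Puisqu'un op\'erateur elliptique ne change pas l'ensemble des microsolutions, cela n'alt\`ere pas la conclusion, mais c'est sous cette forme qu'il faut manipuler la r\'eduction pour \^etre exact. Hormis cette nuance, votre argument est correct et conforme \`a la preuve standard \`a laquelle le papier renvoie.
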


\subsection{Fibres régulières d'un système intégrable}

En conséquence pour une fibre \foreignlanguage{english}{$\Lambda_{E}:=\mathbf{p}^{-1}(E)$
compacte, connexe et régulière,} toute microsolution $u_{h}$ de $\left(P_{h}-EI_{d}\right)u_{h}=O(h^{\infty})$
est engendré par \foreignlanguage{english}{$U^{-1}(\mathbf{1})$.
La théorie des opérateurs intégraux de Fourier montre que $u_{h}$
est nécesseraiment du type WKB. On va maintenant décrire comment on
prolonge une microsolution d'un ouvert à un autre le long d'une fibre
régulière (pour plus de détails, voir \textbf{{[}34]}). Pour commencer
on se donne un recouvrement fini $\left(U_{\alpha}\right)_{\alpha\in\Omega}$
d'ouverts de la fibre $\Lambda_{E}$. Pour tout couple d'ouverts non
vides $U_{\alpha}$ et $U_{\beta}$ du recouvrement tels que $U_{\alpha}\cap U_{\beta}$
est non vide et connexe; si on considère alors deux microfonctions
$\varphi_{\alpha}$ et $\varphi_{\beta}$ solutions de $\left(P_{h}-\lambda I_{d}\right)u_{h}=O(h^{\infty})$
microlocalement sur les ouverts respectifs $U_{\alpha}$ et $U_{\beta}$,
les microfonctions $\varphi_{\alpha}$ et $\varphi_{\beta}$} sont
respectivement engendrés par \foreignlanguage{english}{$U^{-1}\left(\mathbf{1}_{\alpha}\right)$
et par $U^{-1}\left(\mathbf{1}_{\beta}\right)$, $\mathbf{1}_{\alpha}$
et $\mathbf{1}_{\beta}$ étant égale à $1$ microlocalement sur $U_{\alpha}$
et respectivement sur $U_{\beta}$. En se plaçant sur $U_{\alpha}\cap U_{\beta}$
et en utilsant l'argument de la dimension $1$ on a l'existence de
$c_{\alpha,\beta}\in\mathbb{C}_{h}$ tel que sur $U_{\alpha}\cap U_{\beta}$
on ait\[
U^{-1}\left(\mathbf{1}_{\alpha}\right)=c_{\alpha,\beta}U^{-1}\left(\mathbf{1}_{\beta}\right)\]
et donc, sur $U_{\alpha}\cap U_{\beta}$ on a : \[
\mathbf{1}_{\alpha}=c_{\alpha,\beta}\mathbf{1}_{\beta}.\]
La théorie des opérateurs intégraux de Fourier montre (voir \textbf{{[}33]},\textbf{{[}34]})
que la constante $c_{\alpha,\beta}$ s'écrit sous la forme \[
c_{\alpha,\beta}=e^{\frac{iS_{\alpha\beta}}{h}}\]
le scalaire $S_{\alpha\beta}$ étant dans $\mathbb{C}_{h}$ est dépendant
de la variable $E$. Plus généralement pour une famille finie $\left(U_{k}\right)_{k=1,...,l}$
d'ouverts non vides recouvrant une partie compacte et connexe de la
fibre régulière $\Lambda_{E}$ telle que pour tout indice $k\in\left\{ 1,..,l-1\right\} ,$
$U_{k}\cap U_{k+1}$ est non vide et connexe. Sur chaques ouverts
$U_{k}$ on a un générateur $\mathbf{1}_{k}$ de $\mathcal{L}\left(P_{h},\lambda,U_{k}\right)$
et pour tout indice $k\in\left\{ 1,..,l-1\right\} $ il existe $c_{k,k+1}=e^{\frac{iS_{k,k+1}}{h}}\in\mathbb{C}_{h}$
tel que :}

\selectlanguage{english}%
\[
\mathbf{1}_{k}=c_{k,k+1}\mathbf{1}_{k+1}\]
\foreignlanguage{french}{ainsi nous avons alors l'égalité suivante
}\[
\mathbf{1}_{1}=c_{1,2}c_{2,3}\ldots c_{l-1,l}\mathbf{1}_{l}.\]
On peut donc écrire $\mathbf{1}_{1}=e^{\frac{iS_{1,l}}{h}}1_{l}$
où on a posé $S_{1,l}=\sum_{k=1}^{l-1}S_{k,k+1}$. \foreignlanguage{french}{La
dépendance en la variable $E$ est lisse : les fonctions $E\mapsto S_{.}(E)$
sont $\mathcal{C}^{\infty}$(voir \textbf{{[}31]},\textbf{{[}33]}
et \textbf{{[}34]}).}

\selectlanguage{french}%
\begin{center}
\includegraphics[scale=0.5]{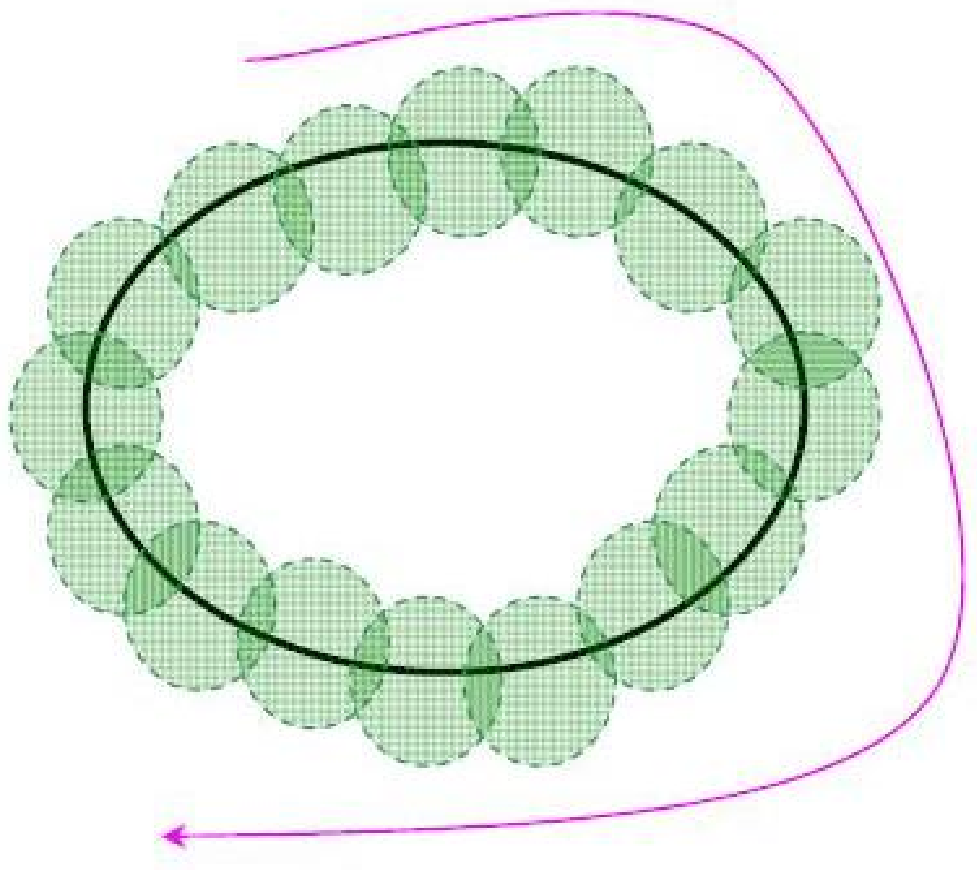}
\par\end{center}

\begin{center}
\textit{Fig. 2: Un recouvrement par des ouverts d'une fibre régulière.}
\par\end{center}

\vspace{+0.25cm}

\subsection{Théorème d'Egorov et opérateurs intégraux de Fourier}

Pour finir donnons le théorème d'Egorov qui permet de définir rapidement
la notion d'opérateur intégral de Fourier, voir par exemple \textbf{{[}12]}
:

\begin{thm}
\textbf{(Egorov)} : Soient $\left(T^{*}X,d\alpha\right)$ et $\left(T^{*}Y,d\beta\right)$
deux variétés symplectomorphe : il existe $\chi$ un symplectomorphisme
de $T^{*}X$ dans $T^{*}Y$. On supposera que $\chi$ est exact :
$\chi^{*}\beta-\alpha$ est une 1-forme exacte sur $X$. Alors il
existe $\widetilde{\chi}$ un morphisme de $\mathbb{C}_{h}$-module
de $\mathcal{M}_{h}(X)$ dans $\mathcal{M}_{h}(Y)$ inversible tel
que pour tout $a\in\mathcal{M}_{h}(Y)$, en notant par $\hat{a}=O_{p}^{w}(a)$,
l'opérateur :\[
B=\widetilde{\chi}^{-1}\circ\hat{a}\circ\widetilde{\chi}\]
est un opérateur pseudo-différentiel sur $\mathcal{M}_{h}(X)$, et
dont le symbole principal est donné par $a_{0}\circ\chi$, $a_{0}$
étant le symbole principal de $\hat{a}$. On dit que $\widetilde{\chi}$
est un opérateur intégral de Fourier associé à $\chi$.
\end{thm}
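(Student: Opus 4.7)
Le plan est de construire $\widetilde{\chi}$ par quantification d'une isotopie hamiltonienne reliant l'identité à $\chi$, puis de vérifier la formule de conjugaison au niveau des symboles principaux via l'équation de Heisenberg semi-classique.

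\emph{Étape 1 : Réalisation de $\chi$ comme flot hamiltonien.} Puisque $\chi$ est exact, on peut localement le joindre à l'identité par une isotopie $(\chi_t)_{t\in[0,1]}$ de symplectomorphismes exacts avec $\chi_0 = \text{Id}$ et $\chi_1 = \chi$. Une telle isotopie est engendrée par un hamiltonien dépendant du temps : il existe $p_t \in \mathcal{C}^{\infty}(T^{*}X)$ tel que $\frac{d}{dt}\chi_t = X_{p_t} \circ \chi_t$. L'hypothèse d'exactitude $\chi^{*}\beta - \alpha = df$ est précisément ce qui garantit que la fonction $p_t$ peut être définie globalement (en utilisant la primitive $f$ pour fixer les constantes d'intégration).

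\emph{Étape 2 : Quantification du flot.} Je quantifie $p_t$ en $P_t := O_p^w(p_t)$ et je définis $\widetilde{\chi}_t$ comme la solution microlocale de l'équation de Schr\"odinger semi-classique
\[
ih \frac{d}{dt} \widetilde{\chi}_t = P_t \circ \widetilde{\chi}_t, \quad \widetilde{\chi}_0 = \text{Id}.
\]
Par résolution de cette équation différentielle linéaire, $\widetilde{\chi}_t$ fournit un morphisme inversible de $\mathbb{C}_h$-modules entre les espaces de microfonctions concernés, et je pose $\widetilde{\chi} := \widetilde{\chi}_1$.

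\emph{Étape 3 : Formule de conjugaison au niveau symbolique.} Fixons $a \in \mathcal{M}_h(Y)$ de symbole principal $a_0$ et posons $A_t := \widetilde{\chi}_t^{-1} \circ \hat{a} \circ \widetilde{\chi}_t$. Par dérivation directe on obtient
\[
ih \frac{d}{dt} A_t = [A_t, P_t].
\]
La Proposition 2.1 (correspondance commutateur / crochets de Poisson à l'ordre $h$) entraîne alors, au niveau des symboles principaux, l'équation de transport $\frac{d}{dt}\sigma_0(A_t) = \{\sigma_0(A_t), p_t\}$, dont la solution le long du flot hamiltonien de $p_t$ s'écrit $\sigma_0(A_t) = a_0 \circ \chi_t$. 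En $t=1$ on conclut que $B = \widetilde{\chi}^{-1} \hat{a} \widetilde{\chi}$ est pseudo-différentiel de symbole principal $a_0 \circ \chi$.

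\emph{Principal obstacle.} Le point le plus délicat me paraît être le recollement des constructions locales en un unique morphisme global. Deux constructions locales obtenues via des choix différents d'isotopie ou de primitive diffèrent a priori par des facteurs de phase de la forme $e^{iS/h}$ ; la trivialité cohomologique imposée par l'exactitude de $\chi$ assure que ces facteurs se compensent, et la structure de faisceau de $\mathcal{M}_h$ permet alors de recoller sans ambiguïté les opérateurs locaux. Sans l'hypothèse d'exactitude, l'obstruction vivrait dans $H^{1}(T^{*}X,\mathbb{R})$ et $\widetilde{\chi}$ ne serait défini qu'à un défaut de phase près, ce qui ruinerait la fonctorialité de la construction.
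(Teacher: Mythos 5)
The paper does not actually prove this theorem: it states it as a preliminary and refers to the lecture notes of Colin de Verdi\`ere \textbf{[12]} for the construction, so there is no internal proof to compare against. Your proposal is therefore assessed on its own terms; it follows the standard \og quantized Hamiltonian isotopy\fg{} route, which is indeed one of the canonical ways to obtain Egorov-type results, but as written it contains a genuine gap and an incomplete step.

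The gap is in \emph{\'Etape 1}. You propose to join $\chi$ to the identity by an isotopy $(\chi_t)$ with $\chi_0 = \mathrm{Id}$ and $\chi_1 = \chi$. But $\chi$ is a map from $T^*X$ to $T^*Y$, while the identity lives on $T^*X$; an isotopy between them makes sense only when $Y = X$. The theorem as stated allows $X \neq Y$. The usual fix is to first pick a diffeomorphism $\phi : X \to Y$, whose pullback $\phi^* : L^2(Y) \to L^2(X)$ is an elementary Fourier integral operator quantizing the induced exact symplectomorphism $T^*Y \to T^*X$, and then apply your isotopy argument to $T^*\phi^{-1}\circ\chi$, which is now an exact symplectomorphism of $T^*X$ to itself. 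Alternatively one can avoid isotopies altogether by constructing $\widetilde\chi$ directly as an oscillatory integral whose phase is a (local) generating function of the Lagrangian graph of $\chi$ in $T^*(X\times Y)$; this is closer to what \textbf{[12]} does and sidesteps both the $X\neq Y$ issue and the question of whether an exact symplectomorphism is globally Hamiltonian-isotopic to the identity. Second, in \emph{\'Etape 3} you only verify the principal-symbol relation $\sigma_0(B)=a_0\circ\chi$ via the transport equation, but the theorem also asserts that $B$ is a full pseudo-differential operator, i.e.\ that its Weyl symbol has an asymptotic expansion in powers of $h$. This requires carrying the Heisenberg equation $ih\,\dot A_t = [A_t,P_t]$ to all orders and solving the resulting hierarchy of transport equations for the subprincipal terms; it is routine once set up, but it should be stated since it is part of the claim.

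Your observation in the \emph{Principal obstacle} paragraph is the right one: exactness of $\chi$ is exactly what removes the cohomological obstruction to gluing the local phase factors, and the sheaf structure of $\mathcal{M}_h$ provides the recollement. That part is correct and is indeed the conceptual heart of the microlocal statement.
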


\subsection{Théorie spectrale de l'opérateur de Schrödinger}

Pour un panorama à la fois historique et actuel sur l'étude du laplacien
et de l'opérateur de Schrödinger sur une variété riemannienne on pourra
consulter\textbf{ {[}23]}. Ici on considère l'opérateur linéaire sur
$L^{2}(\mathbb{R})$:

\[
{\displaystyle P_{h}}:=-\frac{h^{2}}{2}\frac{d^{2}}{dx^{2}}+V.\]
T. Carleman \textbf{{[}4]} en 1934 à montré que si la fonction $V$
est localement bornée et globalement minorée, alors l'opérateur de
Schrödinger ${\displaystyle P_{h}}$ est essentiellement auto-adjoint.
En 1934 K. Friedrichs \textbf{{[}16]} a montré que dans le cas où
la fonction $V$ est confinante, ie ${\displaystyle \lim_{|x|\rightarrow\infty}V(x)=+\infty}$
alors le spectre de l'opérateur de Schrödinger ${\displaystyle P_{h}}$
est constitué d'une suite de valeurs propres de multiplicité finie
s'accumulant en $+\infty$. Le théorème de Courant de 1953 \textbf{{[}13]},
assure en particulier que la première valeur propre $\mu_{1}(h)$
de l'opérateur ${\displaystyle P_{h}}$ est simple :\[
\min_{x\in\mathbb{R}}V(x)\leq\mu_{1}(h)<\mu_{2}(h)\leq\cdots\leq\mu_{n}(h){\displaystyle \rightarrow}+\infty.\]
Rappelons \textbf{{[}34]} que pour un compact $K$ de $\left[\min(V),+\infty\right[$
on a la :

\begin{defn}
On appelle spectre semi-classique dans le compact $K$, l'ensemble
$\Sigma_{h}({\displaystyle P_{h}},K)$ des familles de réels $E_{h}\in\mathbb{R}$
vérifiant ${\displaystyle \lim_{h\rightarrow0}}E_{h}\rightarrow E\in K$
et telles qu'il existe une microfonction $u_{h}$ avec $MS(u_{h})=p^{-1}(E)$
et vérifiant :\[
\left({\displaystyle P_{h}}-E_{h}\right)u_{h}=O(h^{\infty}).\]
On appelle multiplicité microlocale de $E_{h}$ la dimension du $\mathbb{C}_{h}$-module
des solutions microlocales de cette équation.
\end{defn}
Moralement le spectre semi-classique (ou microlocal) correspond aux
valeurs propres approchées avec une précision d'ordre $O(h^{\infty})$
incluant les multiplicités. Le spectre semi-classique et le spectre
exact sont liés par la :

\begin{prop}
\textbf{{[}34]} Sur un compact $K$ de $\mathbb{R}$, le spectre semi-classique
$\Sigma_{h}({\displaystyle P_{h},K})$ et le spectre exact $\sigma({\displaystyle P_{h}})$
de l'opérateur linéaire auto-adjoint ${\displaystyle P}_{h}$ sont
liés par \textbf{:\[
\Sigma_{h}({\displaystyle P_{h},K})=\sigma({\displaystyle P})\cap K+O(h^{\infty})\]
}au sens où si $\lambda_{h}\in\Sigma_{h}(P_{h},K)$, alors il existe
$\mu_{h}\in\sigma(P_{h})\cap K$ tel que $\lambda_{h}=\mu_{h}+O(h^{\infty})$;
et si $\mu_{h}\in\sigma({\displaystyle P}_{h})\cap K$, alors il existe
$\lambda_{h}\in\Sigma_{h}({\displaystyle P_{h}},K)$ tel que $\mu_{h}=\lambda_{h}+O(h^{\infty}).$
De plus pour toute famille $E_{h}$ ayant une limite finie $E\in K$
lorsque $h\rightarrow0$, si la multiplicité microlocale de $E_{h}$
est bien définie et est finie, alors elle est égale pour $h$ assez
petit au rang du projecteur spectral de $P_{h}$ sur une boule de
diamètre $O(h^{\infty})$ centrée autour de $E_{h}$.
\end{prop}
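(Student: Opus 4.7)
La d\'emonstration se d\'ecompose en trois volets : les deux inclusions entre spectres, puis la comparaison des multiplicit\'es.

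Pour l'inclusion $\Sigma_h(P_h, K) \subset \sigma(P_h) \cap K + O(h^\infty)$, je partirais d'une famille $E_h$ du spectre semi-classique accompagn\'ee de sa microfonction $u_h$ telle que $MS(u_h) = p^{-1}(E)$. Comme $p^{-1}(E)$ est un compact non vide de $T^*\mathbb{R}$ (le potentiel \'etant confinant), on peut en choisir un repr\'esentant $u_h \in L^2(\mathbb{R})$ de norme $\|u_h\|_{L^2} \sim 1$, quitte \`a le multiplier par un op\'erateur de troncature pseudo-diff\'erentielle dont le symbole vaut $1$ pr\`es de $p^{-1}(E)$ et dont le commutateur avec $P_h$ a son symbole principal s'annulant sur $MS(u_h)$. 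L'\'equation $(P_h - E_h) u_h = O(h^\infty)$ vaut alors dans $L^2(\mathbb{R})$, et l'auto-adjonction de $P_h$ combin\'ee au th\'eor\`eme spectral donne $\mathrm{dist}(E_h, \sigma(P_h)) = O(h^\infty)$.

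R\'eciproquement, pour $\mu_h \in \sigma(P_h) \cap K$, le caract\`ere confinant de $V$ (r\'esultat de Friedrichs rappel\'e plus haut) assure l'existence d'une fonction propre normalis\'ee $v_h$ avec $(P_h - \mu_h) v_h = 0$. Par l'inclusion $L^2(\mathbb{R}) \subset \mathcal{A}_h(\mathbb{R})$, $v_h$ est admissible, et l'\'equation aux valeurs propres entra\^ine $MS(v_h) \subset p^{-1}(\mu_h)$. La non-vacuit\'e de $MS(v_h)$ provient de $\|v_h\|_{L^2} = 1$ (un vecteur unitaire de micro-support vide serait $O(h^\infty)$ dans $L^2$). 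Enfin, l'invariance de $MS(v_h)$ sous le flot hamiltonien de $p$ (propagation des singularit\'es, issue du th\'eor\`eme d'Egorov) combin\'ee \`a la description des composantes connexes de $p^{-1}(\mu_h)$ en dimension $1$ permet d'\'etablir $MS(v_h) = p^{-1}(\mu_h)$, ce qui inscrit $\mu_h$ dans $\Sigma_h(P_h, K)$.

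Pour la comparaison des multiplicit\'es, je passerais par le calcul fonctionnel \`a la Helffer-Sj\"ostrand : pour une fonction $\chi \in \mathcal{C}_c^\infty(\mathbb{R})$ \`a support dans un voisinage arbitrairement petit de $E$ et valant $1$ sur une fen\^etre de largeur $O(h^\infty)$ autour de $E_h$, l'op\'erateur $\chi(P_h)$ est un op\'erateur pseudo-diff\'erentiel de rang fini, dont l'image s'identifie, modulo $O(h^\infty)$, au $\mathbb{C}_h$-module $\mathcal{L}(P_h, E_h, U)$ pour $U$ un voisinage de $p^{-1}(E)$. Le rang de $\chi(P_h)$ co\"incide alors, pour $h$ suffisamment petit, avec celui du projecteur spectral de $P_h$ sur cette fen\^etre, ce qui donne l'\'egalit\'e attendue des multiplicit\'es.

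Le principal obstacle me semble r\'esider dans la seconde \'etape : \'etablir que le micro-support d'une fonction propre $L^2$ co\"incide \emph{exactement} avec toute la fibre $p^{-1}(\mu_h)$ requiert un usage soign\'e de la propagation des singularit\'es, en particulier au voisinage d'\'eventuelles valeurs critiques o\`u les composantes connexes se recollent de mani\`ere singuli\`ere. La quantification pr\'ecise des multiplicit\'es via des fen\^etres spectrales de taille $O(h^\infty)$ dans le calcul fonctionnel constitue l'autre point d\'elicat de la d\'emonstration.
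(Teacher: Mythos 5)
La proposition est cit\'ee de \textbf{[34]} et le texte n'en donne aucune d\'emonstration : il n'y a donc pas de preuve du papier \`a laquelle comparer votre tentative, et celle-ci doit \^etre \'evalu\'ee sur ses propres m\'erites.

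Votre premier volet (inclusion du spectre semi-classique dans le spectre exact modulo $O(h^\infty)$, via troncature pseudo-diff\'erentielle, normalisation et auto-adjonction) est standard et correct. C'est le volet r\'eciproque qui comporte une lacune r\'eelle, que vous avez d'ailleurs partiellement rep\'er\'ee mais que votre argument ne r\'esout pas. La propagation des singularit\'es (invariance du micro-support sous le flot hamiltonien, aux points de type principal r\'eel) ne donne que ceci : $MS(v_h)$ est une r\'eunion non vide d'orbites compl\`etes contenues dans $p^{-1}(\mu_h)$. Lorsque la fibre $p^{-1}(\mu_h)$ est \emph{disconnexe} --- ce qui est pr\'ecis\'ement le cas g\'en\'erique du double puits pour $E<0$, o\`u elle a deux composantes circulaires --- une fonction propre peut parfaitement avoir son micro-support r\'eduit \`a une seule composante. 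Rien dans la propagation ni dans \og la description des composantes connexes en dimension $1$ \fg{} ne force $MS(v_h)=p^{-1}(\mu_h)$. Avec la d\'efinition du spectre semi-classique telle qu'\'ecrite dans l'article (avec \'egalit\'e $MS(u_h)=p^{-1}(E)$), l'inclusion r\'eciproque ne d\'ecoule donc pas de votre argument : il faut soit interpr\'eter la condition de micro-support comme \og $MS(u_h)$ est une r\'eunion de composantes connexes de $p^{-1}(E)$ \fg{} (ou $\subset p^{-1}(E)$ avec $MS(u_h)\neq\emptyset$), soit, dans le cas sym\'etrique, recourir au fait que le d\'edoublement des quasi-doublets est $O(h^\infty)$ pour recombiner les fonctions propres des deux puits en une micro-solution de micro-support plein --- fait qui repose sur un estim\'e de tunnel et non sur la seule propagation.

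Le troisi\`eme volet (multiplicit\'es) est \'egalement d\'ecrit de mani\`ere trop rapide. Prendre $\chi\in\mathcal{C}_c^\infty$ \`a support dans un voisinage fixe de $E$ et valant $1$ sur une fen\^etre de largeur $O(h^\infty)$ ne suffit pas : $\chi(P_h)$ capture alors \emph{toutes} les valeurs propres de ce voisinage (dont le nombre est typiquement $\sim h^{-1}$), et son rang n'a a priori rien \`a voir avec celui du projecteur spectral sur une boule de diam\`etre $O(h^\infty)$ autour de $E_h$. La comparaison correcte entre la multiplicit\'e microlocale (dimension du $\mathbb{C}_h$-module de micro-solutions) et le rang du projecteur spectral passe plut\^ot par la construction d'une base de quasi-modes orthonormalis\'ee engendrant ce module, puis par l'argument classique d'espace spectral associ\'e \`a une famille de quasi-modes presque orthogonaux, et non par le seul calcul fonctionnel de Helffer-Sj\"ostrand.
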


\section{La formule de Colin de Verdière-Parisse}

\subsection{Le cadre}

Soit $V\in\mathcal{C}^{\infty}(\mathbb{R})$ telle que ${\displaystyle \lim_{|x|\rightarrow\infty}V(x)=+\infty}$
et $V$ possédant exactement un maximun local non dégénéré, que l'on
supposera atteint en $0$, ainsi : $V(0)=0,\, V'(0)=0,\, V''(0)<0$.

\begin{example}
Un exemple typique non-générique est la fonction $V(x)=x^{4}-x^{2}$.
\end{example}
\begin{center}
\includegraphics[scale=0.35]{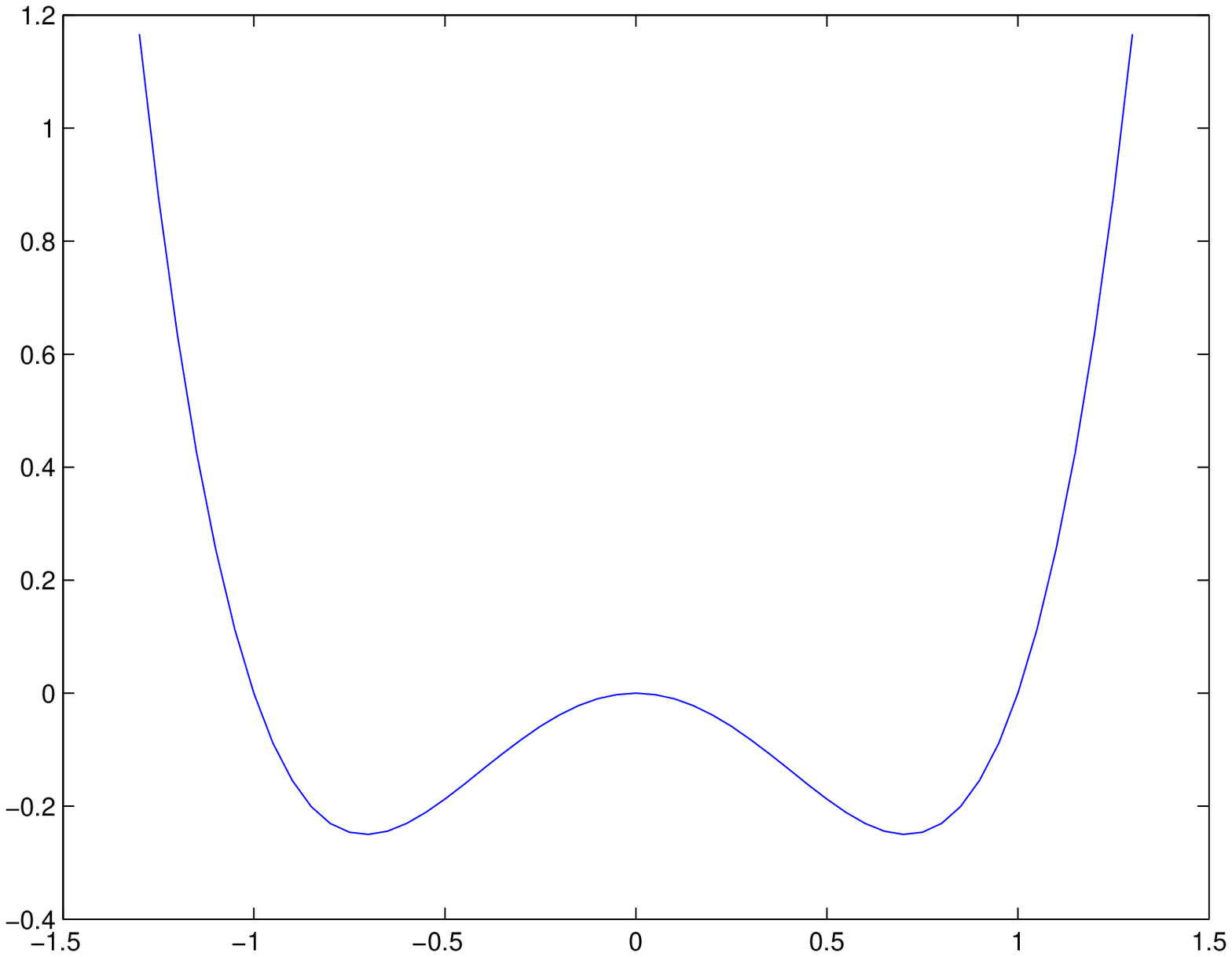}
\par\end{center}

\begin{center}
\textit{Fig. 3: La courbe représentative de la fonction potentiel
paire $V(x)=x^{4}-x^{2}$. On distingue les deux puits (les minima)
du potentiel, le droit et le gauche.}
\par\end{center}

\vspace{+0.25cm}

On notera par $p$ la fonction définie sur le fibré cotangent de $\mathbb{R}$
par : \[
p(x,\xi):=\frac{\xi^{2}}{2}+V(x)\in\mathcal{C}^{\infty}(T^{*}\mathbb{R},\mathbb{R}).\]

\begin{center}
\includegraphics[scale=0.35]{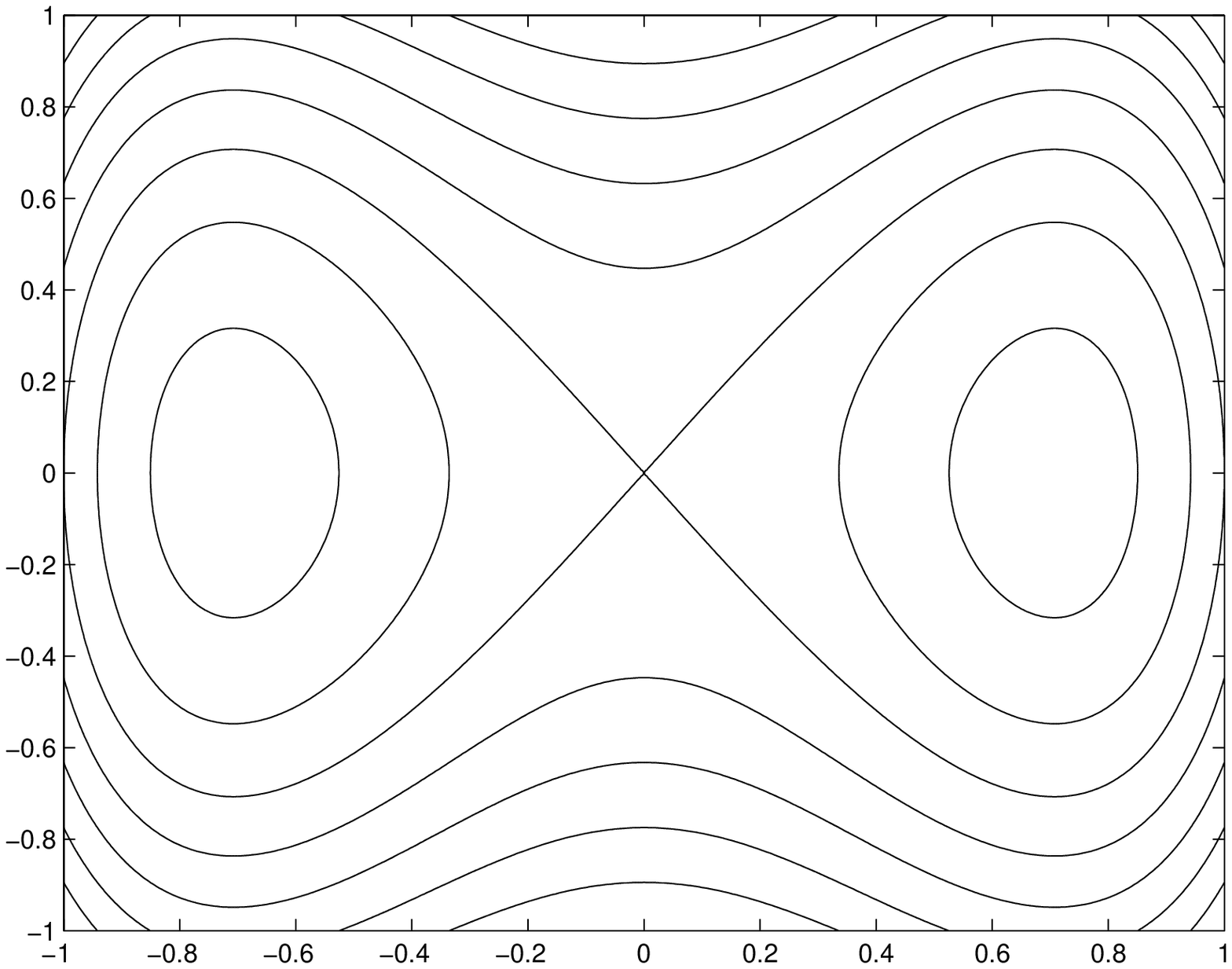}
\par\end{center}

\begin{center}
\textit{Fig. 4: Le feuilletage de $p^{-1}(c)$; avec des c$>0$ les
fibres elliptiques ont une seule composante connexe, pour c=0 : la
fibre singulière en forme de huit hyperbolique, et pour des $c<0$
: les fibres elliptiques ont deux composantes connexes.}
\par\end{center}

\vspace{+0.25cm}

Son quantifié de Weyl ${\displaystyle P_{h}}$ est donné par :\[
{\displaystyle P_{h}}=-\frac{h^{2}}{2}\Delta+V.\]
Pour étudier le spectre de l'opérateur ${\displaystyle P_{h}}$ dans
une fenêtre de taille $E$, avec $E\in\left[-1,1\right]$, considérons
alors l'opérateur :

\[
{\displaystyle P_{h}}-EI_{d}\]
 $I_{d}$ étant l'opérateur identité. Ainsi par définition nous avons
que\[
\left({\displaystyle P_{h}}-EI_{d}\right)u_{h}=O(h^{\infty})\Leftrightarrow E\in\Sigma_{h}(P_{h}).\]

\subsection{Énoncé de la formule}

Y. Colin de Verdière et B. Parisse ont donné les règles de Bohr-Sommerfeld
dans le cas singulier sous la forme suivante :

\begin{thm}
Pour $E\in[-1,1]$ l'équation : \[
\left(P_{h}-EI_{d}\right)u_{h}=O(h^{\infty})\]
admet une solution $u_{h}\in L^{2}(\mathbb{R})$ non triviale avec
son microsupport $MS(u_{h})=p^{-1}\{E\}$ si et seulement si $E$
vérifie l'équation suivante:\begin{equation}
\frac{1}{\sqrt{1+e^{\frac{2\pi\varepsilon}{h}}}}\cos\left(\frac{\theta_{+}-\theta_{-}}{2}\right)=\cos\left(-\frac{\theta_{+}+\theta_{-}}{2}+\frac{\pi}{2}+\frac{\varepsilon}{h}\ln(h)+\arg\left(\Gamma\left(\frac{1}{2}+i\frac{\varepsilon}{h}\right)\right)\right)\label{eq:}\end{equation}
où :\[
\varepsilon:=\varepsilon(E),\,\theta_{+/-}:=\theta_{+/-}(E)=S^{+/-}(E)/h.\]
Les fonctions $\varepsilon$ et $S^{+/-}$ admettant des développements
asymptotiques en puissance de $h$ avec des coefficients $\mathcal{C}^{\infty}$
par rapport à $E$. 
\end{thm}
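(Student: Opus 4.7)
Le plan consiste \`a r\'eduire, par conjugaison microlocale, l'op\'erateur $P_h - E I_d$ au voisinage du point hyperbolique \`a un mod\`ele universel, puis \`a propager les microsolutions le long des deux boucles r\'eguli\`eres du huit $p^{-1}\{E\}$, et enfin \`a imposer la compatibilit\'e globale au moyen du faisceau des microsolutions.

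Premi\`erement, j'invoquerais la forme normale semi-classique au voisinage d'un point critique non-d\'eg\'en\'er\'e de type hyperbolique (Colin de Verdi\`ere-Parisse \textbf{[10]}, voir aussi \textbf{[34]}) : il existe un op\'erateur int\'egral de Fourier $U$, localement d\'efini pr\`es de $(0,0)$, tel que microlocalement
\[
U^{-1}(P_h - E\,I_d)\,U = \widehat{x\xi} - \varepsilon(E,h),
\]
o\`u $\varepsilon(E,h) = \sum_{j\geq 0}\varepsilon_j(E)\,h^j$ d\'epend de fa\c{c}on $\mathcal{C}^{\infty}$ de $E$ et $\widehat{x\xi}$ d\'esigne le quantifi\'e de Weyl de $x\xi$. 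Hors du voisinage de $(0,0)$, chacune des quatre branches de $p^{-1}\{E\}$ issues du point singulier se raccorde \`a l'une des deux boucles r\'eguli\`eres de la fibre en huit. Sur chaque boucle, j'appliquerais la th\'eorie de propagation rappel\'ee dans la section~2 : le faisceau des microsolutions est de rang $1$ sur chaque arc simplement connexe, et le transport parall\`ele entre deux cartes multiplie le g\'en\'erateur $\mathbf{1}$ par un facteur $e^{iS(E)/h}$. La sommation de ces contributions le long de la boucle gauche et de la boucle droite fournit les phases d'action $\theta_\pm = S^\pm(E)/h$, lisses en $E$, avec d\'eveloppement asymptotique en $h$ gr\^ace aux r\'esultats de Vu Ngoc \textbf{[31]}, \textbf{[33]}.

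Deuxi\`emement, le c{\oe}ur de la preuve r\'eside dans le calcul de la ``matrice de diffusion'' microlocale du mod\`ele $\widehat{x\xi} - \varepsilon$ reliant les quatre branches $\{x>0,\,\xi\simeq 0\}$, $\{\xi>0,\,x\simeq 0\}$, $\{x<0,\,\xi\simeq 0\}$, $\{\xi<0,\,x\simeq 0\}$. Les microsolutions de $\widehat{x\xi}\,u = \varepsilon u$ sur chacun de ces demi-axes sont explicites : de la forme $|x|^{-1/2 + i\varepsilon/h}$ en repr\'esentation position (respectivement $|\xi|^{-1/2 + i\varepsilon/h}$ en repr\'esentation moment), et le passage entre deux demi-axes adjacents s'effectue par une transform\'ee de Fourier semi-classique. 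L'int\'egrale de Mellin-Barnes qui en r\'esulte, combin\'ee aux asymptotiques de la fonction Gamma, produit une matrice dont les coefficients font appara\^itre $h^{i\varepsilon/h} = e^{i(\varepsilon/h)\ln h}$, $\Gamma(\tfrac{1}{2} + i\tfrac{\varepsilon}{h})$ et le facteur tunnel $(1 + e^{2\pi\varepsilon/h})^{-1/2}$. C'est ici que se situe la difficult\'e principale : il faut suivre avec pr\'ecision les conventions de phase et les branches de $\arg$ afin de retrouver exactement les constantes du membre de droite de~(\ref{eq:}).

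Enfin, j'assemblerais ces donn\'ees. L'existence d'une microsolution $L^2$ avec $MS(u_h) = p^{-1}\{E\}$ \'equivaut \`a la donn\'ee d'un g\'en\'erateur du faisceau de rang $1$ sur chacune des quatre branches incidentes au point singulier, satisfaisant : (a) les g\'en\'erateurs sur les deux arcs de la boucle gauche (resp. droite) sont reli\'es par $e^{i\theta_-}$ (resp. $e^{i\theta_+}$), avec la correction de Maslov appropri\'ee aux points tournants des boucles ; (b) les quatre g\'en\'erateurs au voisinage de $(0,0)$ sont reli\'es entre eux par la matrice de diffusion construite \`a l'\'etape pr\'ec\'edente. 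L'existence d'une solution non triviale \'equivaut alors \`a l'annulation du d\'eterminant d'un syst\`eme lin\'eaire homog\`ene, qui, apr\`es simplification trigonom\'etrique, prend exactement la forme de l'\'equation~(\ref{eq:}) de l'\'enonc\'e ; le terme $\pi/2$ dans l'argument du cosinus du membre de droite rassemble pr\'ecis\'ement les indices de Maslov accumul\'es le long des deux boucles.
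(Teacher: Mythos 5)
Votre esquisse suit essentiellement la m\^eme strat\'egie que la preuve du texte : forme normale quantique autour du point hyperbolique, calcul de la matrice de passage entre les bases de microsolutions $\{x_\pm^{-1/2+i\varepsilon/h}\}$ et leurs transform\'ees de Fourier via les distributions $[x_\pm^\lambda]$, holonomie du faisceau de rang $1$ le long des deux boucles donnant $e^{i\theta_\pm}$, puis condition de compatibilit\'e exprim\'ee par l'annulation d'un d\'eterminant. Deux petites pr\'ecisions par rapport \`a l'article : la forme normale y comporte en fait un facteur pseudo-diff\'erentiel elliptique $N$ que vous absorbez implicitement (ce qui est l\'egitime puisqu'il est microlocalement inversible et ne change pas l'espace des microsolutions), et le $\pi/2$ du membre de droite provient m\'ecaniquement des facteurs $i$ dans la matrice de passage $Q$ (issus de la transform\'ee de Fourier des $[x_\pm^\lambda]$) et non d'indices de Maslov accumul\'es le long des boucles --- ces derniers sont d\'ej\`a absorb\'es dans les d\'eveloppements de $S^\pm(E)$ ; c'est seulement \emph{a posteriori}, dans la limite r\'eguli\`ere, qu'on retrouve l'interpr\'etation maslovienne.
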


\subsection{Les grandes étapes de la preuve}

On résume \textbf{{[}8]}, \textbf{{[}9]} et une bonne partie de \textbf{{[}10]}.
La preuve de la formule se décompose en plusieurs grandes étapes.

\subsection*{La stratégie }

La première étape de la preuve est une étude locale autour de la singularité.
Pour ça on utilise une forme normale de Birkhoff quantique de manière
à se ramener à une équation différentielle linéaire du premier ordre.
On exhibe alors quatre solutions et on utilise le fait que l'ensemble
des solutions est de dimension 2, pour en déduire une dépendance linéaire
entres ces solutions. La seconde étape consiste à prolonger de manière
globale les fonctions solutions, ce qui donnera à nouveau une dépendance
linéaire entres les solutions. A la fin, on exprime simultanément
ces relations linéaires avec un déterminant.

\subsection*{Première étape : Étude locale autour de la singularité}

Pour un réel $E\in[-1,1]$, on va étudier l'équation $\left(P_{h}-EI_{d}\right)u_{h}=O(h^{\infty})$
avec une forme normale quantique autour de l'origine, utilisons le
:

\begin{thm}
\textbf{(Théorème 3 de {[}10])} Il existe ${\displaystyle U}$ un
opérateur intégral de Fourier, ${\displaystyle N}$ un opérateur pseudo-différentiel
elliptique en $0$ et une fonction $\varepsilon$ ayant un développement
asymptotique en puissance de $h$: $\varepsilon(E)=\sum_{j\geq0}\varepsilon_{j}(E)h^{j}$
où les fonctions $\varepsilon_{j}$ sont de classe $\mathcal{C}^{\infty}$
par rapport à $E$ et indépendante de $h$, tels que microlocalement
dans un ouvert $\Omega_{0}$ contenant l'origine, on ait pour tout
$E\in[-1,1]$ :\begin{eqnarray*}
U^{-1}{\displaystyle \left(P_{h}-EI_{d}\right)}{\displaystyle U}={\displaystyle N}\left({\displaystyle \widehat{x\xi}}-\varepsilon(E)I_{d}\right)\end{eqnarray*}
où\[
{\displaystyle \widehat{x\xi}}:=\frac{h}{i}\left(x\frac{d}{dx}+\frac{1}{2}I_{d}\right)\]
avec $\varepsilon_{0}(0)=0$ et $\varepsilon_{0}^{\prime}(0)=\frac{1}{\sqrt{-V^{''}(0)}}$. 
\end{thm}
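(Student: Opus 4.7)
\medskip
\noindent\textbf{Plan de preuve.} La stratégie se décompose en trois étapes: une mise en forme normale symplectique du symbole principal près de la singularité hyperbolique, un relèvement quantique via une forme normale de Birkhoff, et une factorisation finale par le théorème des fonctions implicites à paramètre. Pour l'étape classique, je pars du constat que $p(x,\xi)=\xi^{2}/2+V(x)$ admet en $(0,0)$ un point critique non-dégénéré hyperbolique, sa partie quadratique $p_{2}=\frac{1}{2}(\xi^{2}+V''(0)x^{2})$ se ramenant, pour $\omega:=\sqrt{-V''(0)}$, à $\omega\cdot x\xi$ par un changement linéaire symplectique. Par un lemme de type Morse symplectique (Eliasson, Colin de Verdière), je construis alors un symplectomorphisme local $\chi$ près de l'origine et une fonction lisse $f$ avec $f(0)=0$ et $f'(0)=\omega$, tels que $p\circ\chi^{-1}=f(x\xi)$ microlocalement près de $(0,0)$.

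Ensuite, par le théorème d'Egorov (Théorème~2.1), je relève $\chi$ en un opérateur intégral de Fourier $U_{0}$ tel que $U_{0}^{-1}P_{h}U_{0}$ soit un opérateur pseudo-différentiel de symbole principal $f(x\xi)$. Je procède alors à une récurrence cohomologique en conjuguant successivement par des opérateurs de la forme $\exp(iA_{j}/h)$, les $A_{j}$ ayant des symboles polynomiaux en $(x,\xi)$. Le fait-clé est que $\{x\xi,x^{a}\xi^{b}\}=(b-a)\,x^{a}\xi^{b}$, ce qui rend $[\widehat{x\xi},\cdot]$ inversible sur les monômes non-diagonaux (i.e.\ $a\neq b$). À chaque ordre en $h$ j'élimine ainsi la partie non-diagonale du symbole courant, ne laissant subsister que des fonctions de $x\xi$. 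Après sommation asymptotique à la Borel et composition, j'obtiens microlocalement près de l'origine l'égalité $U^{-1}P_{h}U=g(\widehat{x\xi},h)$, où $g(s,h)=\sum_{j\geq0}g_{j}(s)h^{j}$ et $g_{0}=f$.

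Pour la factorisation, j'applique le théorème des fonctions implicites à l'équation $g(s,h)=E$ avec paramètre $E\in[-1,1]$: puisque $\partial_{s}g(0,0)=f'(0)=\omega\neq0$, j'obtiens une fonction $\varepsilon(E,h)=\sum_{j\geq0}\varepsilon_{j}(E)h^{j}$ lisse en $E$, telle que $\varepsilon_{0}(E)=f^{-1}(E)$; donc $\varepsilon_{0}(0)=0$ et $\varepsilon_{0}'(0)=1/\omega=1/\sqrt{-V''(0)}$. La division de Taylor fournit ensuite $g(s,h)-E=(s-\varepsilon(E,h))\cdot n(s,E,h)$ avec $n$ ne s'annulant pas près de $(0,0)$. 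En quantifiant cette identité dans la variable $s=x\xi$ (légitime dans l'algèbre commutative engendrée par $\widehat{x\xi}$), j'obtiens l'égalité microlocale voulue $U^{-1}(P_{h}-EI_{d})U=N(\widehat{x\xi}-\varepsilon(E)I_{d})$ avec $N$ elliptique en $0$.

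Le point délicat sera la forme normale de Birkhoff quantique au voisinage de la singularité hyperbolique: il faut contrôler la récurrence cohomologique à tous les ordres en $h$, assurer que la sommation asymptotique produit bien un unique OIF défini microlocalement, et surtout garantir la dépendance lisse en $E$ à chaque étape, ce que j'assurerai en traitant dès le départ $P_{h}-EI_{d}$ comme une famille d'opérateurs pseudo-différentiels paramétrée par $E$.
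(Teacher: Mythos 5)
Le papier ne démontre pas ce théorème : il le cite comme \og Théorème 3 de [10]\fg, renvoie explicitement à [8] et [10] pour la preuve, et indique seulement que l'ingrédient classique est le lemme de Morse isochore [6]. Votre esquisse reconstitue fidèlement cette preuve de la littérature : le \og lemme de Morse symplectique\fg\ que vous invoquez pour obtenir $p\circ\chi^{-1}=f(x\xi)$ avec $f(0)=0$ et $f'(0)=\sqrt{-V''(0)}$ est précisément le lemme de Morse isochore de Colin de Verdière--Vey en dimension deux (où préserver le volume équivaut à préserver $\omega$), et la suite --- relèvement par Egorov, élimination cohomologique des monômes non-diagonaux grâce à $\{x\xi,x^{a}\xi^{b}\}=(b-a)x^{a}\xi^{b}$, sommation asymptotique, puis théorème des fonctions implicites et division de Taylor paramétrés par $E$ pour factoriser $g(\widehat{x\xi},h)-E=N(\widehat{x\xi}-\varepsilon(E))$ --- est la stratégie de Colin de Verdière--Parisse. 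Deux réserves de rédaction mineures : le changement symplectique linéaire normalisant $p_{2}=\frac{1}{2}(\xi^{2}+V''(0)x^{2})$ donne $\pm\omega\,x\xi$ selon l'orientation retenue, et ce choix de signe doit être fixé pour que $\varepsilon_{0}'(0)=1/\sqrt{-V''(0)}>0$; par ailleurs, les $A_{j}$ de votre récurrence cohomologique doivent être des germes de symboles lisses (et non des polynômes en $(x,\xi)$), construits ordre par ordre par développement de Taylor puis recollés par sommation de Borel --- ce que vous signalez d'ailleurs vous-même comme le point délicat. Sous réserve de ces détails, votre plan est correct et correspond à la preuve des références [8], [10] auxquelles le papier renvoie.
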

La démonstration de ce théorème de forme normale quantique est donnée
dans \textbf{{[}8]} ou \textbf{{[}10]}, la preuve utilise le lemme
de Morse isochore \textbf{{[}6]. }

\begin{rem}
Cette forme normale reste valide uniquement pour $|E|\leq1$.
\end{rem}
Pour tout $|E|\leq1$ on a :\[
\varepsilon(E)=\varepsilon_{0}(E)+\sum_{j=1}^{\infty}\varepsilon_{j}(E)h^{j}.\]
Ainsi, en appliquant la formule de Taylor sur la fonction lisse $\varepsilon_{0}$,
on a pour tout $E\in[-1,1]$ nous avons :\[
\varepsilon(E)=\frac{E}{\sqrt{-V^{''}(0)}}+O(E^{2})+\sum_{j=1}^{\infty}\varepsilon_{j}(E)h^{j}.\]
Par la suite on va utiliser ce théorème avec $E:=\lambda h^{\alpha}$
où $\lambda\in[-1,1]$ et $\alpha\geq0$. Ainsi dans ce là nous avons
pour tout $\lambda\in[-1,1]$ :

\begin{equation}
\varepsilon(\lambda h^{\alpha})=\frac{\lambda h^{\alpha}}{\sqrt{-V^{''}(0)}}+O(h^{2\alpha})+\sum_{j=1}^{\infty}\varepsilon_{j}(\lambda h^{\alpha})h^{j}\label{eq:}\end{equation}

\begin{center}
\includegraphics[scale=0.5]{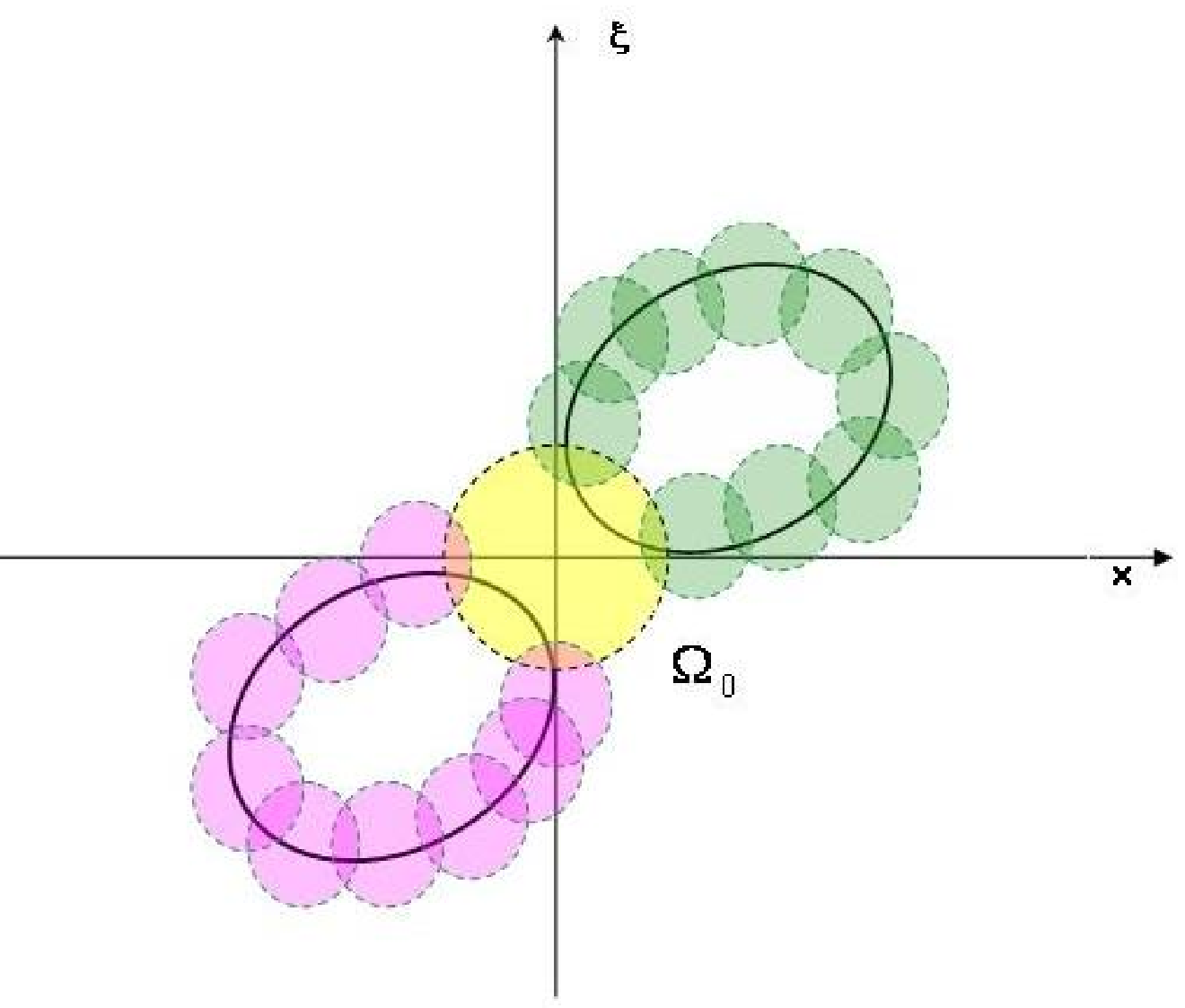}
\par\end{center}

\begin{center}
\textit{Fig. 5: Ouvert $\Omega_{0}$ où la forme normale est valide.}
\par\end{center}

$\vspace{0.25cm}$

Grâce à ce théorème on a un lien très simple entres les vecteurs propres
de ${\displaystyle \left(P_{h}-EI_{d}\right)}$ et ceux de ${\displaystyle \widehat{x\xi}}-\varepsilon(E)I_{d}$;
en effet on voit facilement que :\[
{\displaystyle \left(P_{h}-EI_{d}\right)}u_{h}=O(h^{\infty})\Leftrightarrow\left({\displaystyle \widehat{x\xi}}-\varepsilon(E)I_{d}\right){\displaystyle U}^{-1}(u_{h})=O(h^{\infty}).\]
 Ainsi si on travaille sur l'ouvert $\Omega_{0}$ où la forme normale
est valide, on est amené à résoudre : \textit{$\left({\displaystyle \widehat{x\xi}}-\varepsilon(E)I_{d}\right)v_{h}=O(h^{\infty})$,
}ie, résoudre :\textit{ $xv_{h}'(x)+\left(\frac{1}{2}-i\frac{\varepsilon(E)}{h}\right)v_{h}(x)=O(h^{\infty}).$}
Alors, par simple intégration d'équation différentielle ordinaire
linéaire du premier ordre, les solutions exactes de \textit{$\left({\displaystyle \widehat{x\xi}}-\varepsilon(E)I_{d}\right)v_{h}=0$}
sont engendrées par les deux fonctions :\[
\varphi_{1}(x):=x_{+}^{-\frac{1}{2}+i\frac{\varepsilon}{h}}=\mathbf{1}_{\mathbb{R}_{+}^{*}}(x)e^{-\frac{1}{2}\ln(x)+i\frac{\varepsilon}{h}\ln(x)}\]
et\[
\varphi_{2}(x):=x_{-}^{-\frac{1}{2}+i\frac{\varepsilon}{h}}=\mathbf{1}_{\mathbb{R}_{-}^{*}}(x)e^{-\frac{1}{2}\ln(-x)+i\frac{\varepsilon}{h}\ln(-x)}.\]
Ensuite, l'idée est de construire deux autres solutions de $\left({\displaystyle \widehat{x\xi}}-\varepsilon(E)I_{d}\right)v_{h}=0$;
pour cela on utilise la $h$-transformée de Fourier définie par \textit{:\[
\mathcal{F}_{h}(f)(x):=\frac{1}{\sqrt{2\pi h}}{\displaystyle \int_{-\infty}^{+\infty}f(t)e^{-\frac{ixt}{h}}\, dt.}\]
}En effet, en utilisant les propriétés usuelles sur la dérivation
des $h$-transformées de Fourier on a la : 

\begin{prop}
En posant $\varphi_{1}^{*}(x):=x_{+}^{-\frac{1}{2}-i\frac{\varepsilon}{h}}$
et $\varphi_{2}^{*}(x):=x_{-}^{-\frac{1}{2}-i\frac{\varepsilon}{h}}$,
les fonctions $\varphi_{3}$ et $\varphi_{4}$ définies par \[
\varphi_{3}(\xi):=e^{-i\frac{\pi}{4}}\mathcal{F}_{h}\left(\varphi_{1}^{*}\right)(-\xi)\;\;\, et\,\;\;\varphi_{4}(\xi):=e^{-i\frac{\pi}{4}}\mathcal{F}_{h}\left(\varphi_{2}^{*}\right)(-\xi)\]
sont aussi solutions exactes de ${\displaystyle \widehat{x\xi}}-\varepsilon(E)I_{d}=0$.
\end{prop}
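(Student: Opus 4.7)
Ma strat�gie repose sur deux observations �l�mentaires. D'abord, je v�rifierais directement que $\varphi_{1}^{*}$ et $\varphi_{2}^{*}$ satisfont l'\emph{�quation conjugu�e} $(\widehat{x\xi}+\varepsilon I_{d})v=0$: sur $\mathbb{R}_{+}^{*}$ un calcul imm�diat donne $x(\varphi_{1}^{*})'(x)=(-\tfrac{1}{2}-i\tfrac{\varepsilon}{h})\varphi_{1}^{*}(x)$, donc $(x\tfrac{d}{dx}+\tfrac{1}{2})\varphi_{1}^{*}=-i\tfrac{\varepsilon}{h}\varphi_{1}^{*}$ puis $\widehat{x\xi}\varphi_{1}^{*}=-\varepsilon\varphi_{1}^{*}$ --- le facteur $x$ tue la contribution distributionnelle �ventuelle � l'origine; le cas de $\varphi_{2}^{*}$ est sym�trique. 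Ensuite, je noterais que le symbole classique $x\xi$ se transforme en $\xi\cdot(-x)=-x\xi$ sous le symplectomorphisme $(x,\xi)\mapsto(\xi,-x)$ associ� � $\mathcal{F}_{h}$, ce qui sugg�re l'intertwinement $\mathcal{F}_{h}\circ\widehat{x\xi}=-\widehat{x\xi}\circ\mathcal{F}_{h}$.

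Je v�rifierais cet intertwinement au moyen des identit�s standard $\mathcal{F}_{h}(xf)=ih\partial_{\xi}\mathcal{F}_{h}(f)$ et $\mathcal{F}_{h}(\tfrac{h}{i}f')=\xi\mathcal{F}_{h}(f)$ (valides distributionnellement): un calcul direct fournit
\[
\mathcal{F}_{h}\bigl(\widehat{x\xi}\,v\bigr)(\xi)=-\frac{h}{i}\Bigl(\xi\partial_{\xi}+\frac{1}{2}\Bigr)\mathcal{F}_{h}(v)(\xi)=-\widehat{x\xi}\bigl(\mathcal{F}_{h}(v)\bigr)(\xi).
\]
Appliqu�e � $v=\varphi_{j}^{*}$, cette relation donnera $\widehat{x\xi}(\mathcal{F}_{h}\varphi_{j}^{*})=\varepsilon\,\mathcal{F}_{h}\varphi_{j}^{*}$ pour $j=1,2$.

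Pour conclure, il suffit de remarquer que l'op�rateur $\widehat{x\xi}=\tfrac{h}{i}(\xi\partial_{\xi}+\tfrac{1}{2})$ commute avec le changement de variable $\xi\mapsto-\xi$ (les termes $\xi\partial_{\xi}$ et $I_{d}$ sont tous deux invariants): la fonction $g(\xi):=\mathcal{F}_{h}(\varphi_{j}^{*})(-\xi)$ est alors encore vecteur propre de $\widehat{x\xi}$ pour la valeur propre $\varepsilon$, et la multiplication par la constante $e^{-i\pi/4}$ ne change pas cette propri�t�, ce qui donnera $(\widehat{x\xi}-\varepsilon I_{d})\varphi_{3}=(\widehat{x\xi}-\varepsilon I_{d})\varphi_{4}=0$. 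Le principal obstacle technique sera la justification distributionnelle rigoureuse: les fonctions $\varphi_{j}^{*}$ n'�tant ni dans $L^{1}$ ni dans $\mathcal{S}(\mathbb{R})$, leur $h$-transform�e de Fourier doit �tre interpr�t�e comme distribution temp�r�e (via le prolongement analytique des fonctions g�n�ralis�es $x_{\pm}^{s}$ � la Gel'fand--Shilov, ce qui fera d'ailleurs appara�tre $\Gamma(\tfrac{1}{2}+i\tfrac{\varepsilon}{h})$, pr�sage de la formule principale), et les relations d'intertwinement doivent �tre valid�es dans ce cadre avant d'effectuer les manipulations formelles ci-dessus.
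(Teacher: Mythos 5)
Votre preuve est correcte et rend explicite ce que le papier se contente d'invoquer en une ligne (\og propri\'et\'es usuelles sur la d\'erivation des $h$-transform\'ees de Fourier \fg): les trois observations --- $\varphi_{j}^{*}$ v\'erifie l'\'equation conjugu\'ee $(\widehat{x\xi}+\varepsilon I_{d})v=0$, l'anti-entrelacement $\mathcal{F}_{h}\circ\widehat{x\xi}=-\widehat{x\xi}\circ\mathcal{F}_{h}$, et l'invariance de $\widehat{x\xi}$ par $\xi\mapsto-\xi$ --- sont exactement les ingr\'edients implicites. C'est essentiellement la m\^eme approche que celle du papier, simplement d\'etaill\'ee.
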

Maintenant si $u_{h}$ est solution de \textit{$\left({\displaystyle \widehat{x\xi}}-\varepsilon(E)I_{d}\right)u_{h}=b_{h}$}
où le second membre $b_{h}$ est un $O(h^{\infty})$, on peut, en
utilisant essentiellement la méthode de la variation de la constante,
voir \textbf{{[}8]}, montrer que nécessairement $\exists!x_{1},x_{2}\in\mathbb{C}_{h}^{2}$
tels que $u_{h}=x_{1}\varphi_{1}+x_{2}\varphi_{2}+O(h^{\infty})$,
en effet :

\begin{thm}
\textbf{{[}8]} L'espace des solutions microlocales de l'équation $\left({\displaystyle \widehat{P}}-EI_{d}\right)u_{h}=O(h^{\infty})$
dans $\Omega_{0}$ est un $\mathbb{C}_{h}$- module libre de rang
$2$. 
\end{thm}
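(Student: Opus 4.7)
Mon plan est d'utiliser la forme normale quantique (th\'eor\`eme 3.3) pour ramener le probl\`eme \`a l'\'equation mod\`ele $(\widehat{x\xi} - \varepsilon(E) I_d) v_h = O(h^\infty)$, puis de r\'esoudre celle-ci par int\'egration directe d'EDO. Je noterai d'abord que $U$ est un op\'erateur int\'egral de Fourier, donc induit un isomorphisme de $\mathbb{C}_h$-modules sur les espaces de microfonctions, et que $N$ est pseudo-diff\'erentiel elliptique en $0$, donc microlocalement inversible pr\`es de l'origine. L'identit\'e
\[
U^{-1}(P_h - E I_d) U = N(\widehat{x\xi} - \varepsilon(E) I_d)
\]
entra\^{\i}nera alors que $u_h \mapsto U^{-1}(u_h)$ \'etablit un isomorphisme de $\mathbb{C}_h$-modules entre les solutions microlocales dans $\Omega_0$ de $(P_h - E I_d) u_h = O(h^\infty)$ et celles de $(\widehat{x\xi} - \varepsilon I_d) v_h = O(h^\infty)$. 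Il suffira donc de traiter ce mod\`ele.

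Ensuite, l'\'equation mod\`ele s'\'ecrit $x v_h'(x) + (\tfrac{1}{2} - i\tfrac{\varepsilon}{h}) v_h(x) = r_h(x)$ avec $r_h = O(h^\infty)$; c'est une EDO lin\'eaire du premier ordre dont les solutions homog\`enes non nulles sur $\{x>0\}$ et $\{x<0\}$ sont respectivement $\varphi_1$ et $\varphi_2$. Par la m\'ethode de la variation de la constante, j'obtiendrai sur chaque demi-droite une d\'ecomposition $v_h = x_1 \varphi_1 + x_2 \varphi_2 + w_h$, avec $(x_1, x_2) \in \mathbb{C}_h^2$ et $w_h$ une solution particuli\`ere de l'\'equation inhomog\`ene. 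L'\'etape d\'elicate sera de choisir $w_h$ admissible avec $w_h = O(h^\infty)$ microlocalement sur $\Omega_0$: je tronquerai $r_h$ par une fonction plateau adapt\'ee et estimerai l'int\'egrale de variation de la constante en exploitant l'int\'egrabilit\'e locale des puissances $x_{\pm}^{-1/2 + i\varepsilon/h}$, le facteur $O(h^\infty)$ du second membre compensant largement les puissances n\'egatives de $h$ provenant des d\'eriv\'ees.

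Pour la libert\'e, je noterai que $\mathrm{supp}(\varphi_1) \subset [0, +\infty)$ et $\mathrm{supp}(\varphi_2) \subset (-\infty, 0]$; dans une relation $x_1 \varphi_1 + x_2 \varphi_2 = O(h^\infty)$ sur $\Omega_0$, la restriction \`a un ouvert contenu dans $\{x > 0\}$, o\`u $\varphi_2 \equiv 0$, donnera $x_1 \varphi_1 = O(h^\infty)$ et donc $x_1 = 0$ dans $\mathbb{C}_h$, et sym\'etriquement $x_2 = 0$. Le principal obstacle sera l'\'etape de contr\^ole microlocal du reste $w_h$: il s'agira de v\'erifier que l'on reste dans $\mathcal{A}_h(\mathbb{R})$ et que l'ordre $O(h^\infty)$ est pr\'eserv\'e uniform\'ement en $E \in [-1,1]$. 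C'est pr\'ecis\'ement l'argument technique d\'evelopp\'e dans \textbf{[8]} qui constitue l'essentiel de la d\'emonstration.
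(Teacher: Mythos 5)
Your plan matches the paper's own (brief) argument: both reduce via the normal form of Theorem 3.3 to the model equation $(\widehat{x\xi}-\varepsilon(E)I_d)v_h=O(h^\infty)$, identify $\varphi_1,\varphi_2$ as the homogeneous solutions, and invoke variation of constants (citing \textbf{[8]}) to absorb the $O(h^\infty)$ right-hand side, with freeness coming from the disjoint supports of $x_+^{-1/2+i\varepsilon/h}$ and $x_-^{-1/2+i\varepsilon/h}$. The paper leaves the key microlocal estimate on the particular solution entirely to \textbf{[8]}, exactly as you do, so this is the same route.
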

En notant par $\mathcal{B}:=\{\varphi_{1},\varphi_{2}\}$ et $\mathcal{B}^{\prime}:=\{\varphi_{3},\varphi_{4}\}$
les deux bases de solutions, la matrice de passage $Q$ de la base
$\mathcal{B}^{\prime}$ à $\mathcal{B}$ est donnée par :

\begin{thm}
En notant $\varepsilon:=\varepsilon(E)$, la matrice de passage $Q$
s'écrit :\[
Q=\mathcal{E}\left(\begin{array}{cc}
1 & ie^{-\frac{\varepsilon}{h}}\\
ie^{-\frac{\varepsilon}{h}} & 1\end{array}\right)\]
où \[
\mathcal{E}:=\frac{\Gamma(\frac{1}{2}+i\frac{\varepsilon}{h})}{\sqrt{2\pi}}e^{\frac{\varepsilon}{h}(\frac{\pi}{2}+\ln(h))}=\frac{1}{\sqrt{1+e^{-2\pi i\frac{\varepsilon}{h}}}}e^{i\arg(\Gamma(\frac{1}{2}+i\frac{\varepsilon}{h}))+i\frac{\varepsilon}{h}\ln(h)}.\]

\end{thm}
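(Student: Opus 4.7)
Le plan est de calculer explicitement les fonctions $\varphi_{3}$ et $\varphi_{4}$ en évaluant les transformées de Fourier semi-classiques qui les définissent, puis d'identifier les coefficients du changement de base en exploitant le fait que $\varphi_{1}$ et $\varphi_{2}$ sont supportées sur les demi-axes disjoints $\mathbb{R}_{+}^{*}$ et $\mathbb{R}_{-}^{*}$. Cette structure à supports disjoints permettra de lire directement les coefficients $Q_{ij}$ comme les rapports $\varphi_{3}/\varphi_{1}$ et $\varphi_{3}/\varphi_{2}$ restreints au demi-axe pertinent, sans avoir à résoudre un système linéaire.

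Concrètement, pour $\xi>0$, en écrivant $\varphi_{3}(\xi)=\frac{e^{-i\pi/4}}{\sqrt{2\pi h}}\int_{0}^{+\infty}t^{-1/2-i\varepsilon/h}e^{i\xi t/h}\,dt$ et en effectuant le changement de variable $y=\xi t/h$, le calcul se ramène à l'intégrale modèle $I_{+}:=\int_{0}^{+\infty}y^{s}e^{iy}\,dy$ avec $s=-1/2-i\varepsilon/h$. Cette intégrale s'évalue par rotation du contour sur le demi-axe imaginaire positif, où $e^{iy}$ devient $e^{-\tau}$, et vaut $i^{s+1}\Gamma(s+1)=e^{i\pi(s+1)/2}\Gamma(s+1)$. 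Pour $\xi<0$, le même changement de variable mène à $I_{-}:=\int_{0}^{+\infty}y^{s}e^{-iy}\,dy=e^{-i\pi(s+1)/2}\Gamma(s+1)$ par rotation sur le demi-axe imaginaire négatif. Après réarrangement, le préfacteur commun se regroupe dans le scalaire $\mathcal{E}$ et le rapport $I_{-}/I_{+}=e^{-i\pi(s+1)}$ produit, après regroupement des phases, le terme extra-diagonal annoncé.

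La deuxième colonne de $Q$, donnant $\varphi_{4}$ en fonction de $\varphi_{1}$ et $\varphi_{2}$, suit soit du même calcul appliqué à $\varphi_{2}^{*}=x_{-}^{-1/2-i\varepsilon/h}$, soit de la symétrie $x\mapsto-x$ qui échange simultanément $\varphi_{1}\leftrightarrow\varphi_{2}$ et $\varphi_{3}\leftrightarrow\varphi_{4}$; cette symétrie force $Q_{22}=Q_{11}$ et $Q_{12}=Q_{21}$, ce qui justifie la forme symétrique de $Q$ annoncée.

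La difficulté technique principale réside dans le contrôle soigneux des branches des puissances complexes $y^{s}$ et dans le choix correct du sens de rotation de contour selon le signe de $\xi$ : c'est précisément le passage par le demi-plan supérieur ou inférieur qui produit les facteurs opposés $e^{\pm\pi\varepsilon/(2h)}$, dont le rapport mesure l'amplitude exponentiellement petite du couplage entre les deux côtés de la singularité hyperbolique. Une fois ces choix de branches fixés, l'identification des coefficients de $Q$ via les supports disjoints de $\varphi_{1}$ et $\varphi_{2}$ est immédiate.
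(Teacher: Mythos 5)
Your plan, computing $\varphi_{3},\varphi_{4}$ by direct contour integration and reading off coefficients from the disjoint supports of $\varphi_{1},\varphi_{2}$, is sound and genuinely differs from the paper's route: the paper establishes a general distributional lemma (Lemma 3.9, proved in the annex by holomorphic continuation of tempered distributions) for $\mathcal{F}_{h}\left(\left[x_{\pm}^{\lambda}\right]\right)$, and then uses $\mathcal{F}_{h}\circ\mathcal{F}_{h}=\text{reflection}$ to express $\varphi_{1}$ and $\varphi_{2}$ in the basis $\{\varphi_{3},\varphi_{4}\}$, which gives the columns of $Q$ directly. Your variant bypasses both the general distributional machinery (for $\lambda=-\frac{1}{2}-i\frac{\varepsilon}{h}$ the oscillatory integrals are already conditionally convergent) and the double-transform trick. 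But there is a real bookkeeping issue you must fix: relation (3.3), $\left(\begin{smallmatrix}x_{3}\\x_{4}\end{smallmatrix}\right)=Q\left(\begin{smallmatrix}x_{1}\\x_{2}\end{smallmatrix}\right)$, pins $Q$ down as the matrix whose \emph{columns} are the coordinates of $\varphi_{1},\varphi_{2}$ in $\{\varphi_{3},\varphi_{4}\}$. Expanding $\varphi_{3},\varphi_{4}$ in $\{\varphi_{1},\varphi_{2}\}$, as you propose, therefore produces $Q^{-1}$, not $Q$. Concretely, with $s+1=\frac{1}{2}-i\frac{\varepsilon}{h}$ one gets $I_{-}/I_{+}=e^{-i\pi(s+1)}=-ie^{-\pi\varepsilon/h}$ and a common prefactor $\overline{\mathcal{E}}=\frac{h^{-i\varepsilon/h}}{\sqrt{2\pi}}\,e^{\pi\varepsilon/(2h)}\,\Gamma\!\left(\frac{1}{2}-i\frac{\varepsilon}{h}\right)$; what you obtain is
$Q^{-1}=\overline{\mathcal{E}}\left(\begin{smallmatrix}1&-ie^{-\pi\varepsilon/h}\\-ie^{-\pi\varepsilon/h}&1\end{smallmatrix}\right)$,
which is consistent with the theorem since $\left|\mathcal{E}\right|^{2}\left(1+e^{-2\pi\varepsilon/h}\right)=1$, but your final claim that $I_{-}/I_{+}$ \og{}produit \ldots le terme extra-diagonal annonc�\fg{} is false as written (both the sign and the complex conjugation of the scalar are off). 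Either invert at the end, or run the same contour computation to express $\varphi_{1},\varphi_{2}$ in terms of $\varphi_{3},\varphi_{4}$, which is precisely what the paper's detour through $\mathcal{F}_{h}\circ\mathcal{F}_{h}$ achieves.
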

\begin{proof}
Pour cela on a besoin du (voir annexe) :
\end{proof}
\begin{lem}
Pour tout $\lambda\in\mathbb{C}-\mathbb{Z}^{*}$ on a, au sens des
distributions, que :\[
\mathcal{F}_{h}\left(\left[x_{+}^{\lambda}\right]\right)(\xi)=\frac{i\Gamma(\lambda+1)}{\sqrt{2\pi}}h^{\lambda+\frac{1}{2}}\left[e^{i\frac{\pi}{2}\lambda}\xi_{-}^{-\lambda-1}-e^{-i\frac{\pi}{2}\lambda}\xi_{+}^{-\lambda-1}\right]\]
et\[
\mathcal{F}_{h}\left(\left[x_{-}^{\lambda}\right]\right)(\xi)=\frac{i\Gamma(\lambda+1)}{\sqrt{2\pi}}h^{\lambda+\frac{1}{2}}\left[e^{i\frac{\pi}{2}\lambda}\xi_{+}^{-\lambda-1}-e^{-i\frac{\pi}{2}\lambda}\xi_{-}^{-\lambda-1}\right]\]
$\Gamma$ désignant la fonction Gamma d'Euler usuelle.
\end{lem}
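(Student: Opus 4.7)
La strat\'egie est un calcul direct par d\'eformation de contour, suivi d'un prolongement analytique en $\lambda$, avec la seconde identit\'e qui se d\'eduira de la premi\`ere par sym\'etrie. La formule �tant lin\'eaire en la distribution r\'egularis\'ee $x_+^\lambda$ (m\'eromorphe en $\lambda$ avec des p\^oles simples aux entiers n\'egatifs), et le second membre �tant lui aussi m\'eromorphe en $\lambda$ avec des p\^oles aux m\^emes points (ceux de $\Gamma(\lambda+1)$), il suffit de v\'erifier l'\'egalit\'e dans une bande horizontale du plan complexe, par exemple $-1<\mathrm{Re}(\lambda)<0$, o\`u $x_+^\lambda$ est une fonction localement int\'egrable, l'int\'egrale d\'efinissant $\mathcal{F}_h$ converge au sens des distributions temp\'er\'ees, et tout est licite.

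Dans cette bande, je calcule pour $\xi\neq 0$
\[
\mathcal{F}_h(x_+^\lambda)(\xi)=\frac{1}{\sqrt{2\pi h}}\int_0^{+\infty}x^\lambda e^{-ix\xi/h}\,dx.
\]
Pour $\xi>0$, je d\'eforme le contour d'int\'egration de la demi-droite $\mathbb{R}_+$ vers la demi-droite $\{-it,\, t\geq 0\}$ par rotation d'angle $-\pi/2$ : le lemme de Jordan (valable gr\^ace \`a $\mathrm{Re}(\lambda)<0$ et au facteur exponentiel d\'ecroissant pour $\xi>0$) assure l'annulation de la contribution de l'arc \`a l'infini. Apr\`es le changement de variable $u=x\xi/h$ sur le contour tourn\'e, on obtient
\[
\int_0^{+\infty}x^\lambda e^{-ix\xi/h}\,dx=\left(\frac{h}{\xi}\right)^{\lambda+1}e^{-i\frac{\pi}{2}(\lambda+1)}\Gamma(\lambda+1),
\]
ce qui, apr\`es simplification $e^{-i\pi(\lambda+1)/2}=-ie^{-i\pi\lambda/2}$, donne exactement $-i\frac{\Gamma(\lambda+1)}{\sqrt{2\pi}}h^{\lambda+1/2}e^{-i\pi\lambda/2}\xi_+^{-\lambda-1}$, soit la partie $\xi>0$ de la formule annonc\'ee. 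Pour $\xi<0$, je proc\`ede sym\'etriquement en tournant le contour dans le demi-plan sup\'erieur (angle $+\pi/2$), ce qui fournit la contribution avec $e^{+i\pi\lambda/2}\xi_-^{-\lambda-1}$.

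L'identit\'e pour $\mathcal{F}_h(x_-^\lambda)$ se d\'eduit alors imm\'ediatement de la premi\`ere par le changement de variable $x\mapsto -x$ : on a $x_-^\lambda(x)=x_+^\lambda(-x)$, donc $\mathcal{F}_h(x_-^\lambda)(\xi)=\mathcal{F}_h(x_+^\lambda)(-\xi)$, ce qui \'echange $\xi_+$ et $\xi_-$ dans la formule. Enfin, les deux membres d\'ependent de mani\`ere m\'eromorphe de $\lambda\in\mathbb{C}$ avec m\^emes p\^oles (simples, aux entiers strictement n\'egatifs), donc l'identit\'e prouv\'ee sur une bande se prolonge \`a $\mathbb{C}\setminus\mathbb{Z}^*$ par analyticit\'e ; aux entiers strictement positifs, la formule reste valide en interpr\'etant $\xi_\pm^{-\lambda-1}$ au sens des parties finies d'Hadamard.

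La principale difficult\'e technique n'est pas le calcul mais la justification rigoureuse de la rotation de contour : il faut veiller \`a ce que le facteur $x^\lambda$, multivalu\'e, soit coh\'eremment prolong\'e dans le quadrant concern\'e (avec la d\'etermination principale du logarithme, $(-it)^\lambda=t^\lambda e^{-i\pi\lambda/2}$), et \`a ce que le lemme de Jordan s'applique bien sur l'arc \`a l'infini, ce qui impose la restriction temporaire $\mathrm{Re}(\lambda)<0$ lev\'ee ensuite par prolongement.
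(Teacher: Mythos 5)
Votre preuve suit essentiellement la m\^eme strat\'egie que celle de l'annexe du papier : bande $-1<\mathrm{Re}(\lambda)<0$ o\`u tout est licite, d\'eformation de contour en secteur angulaire avec annulation des arcs (grand arc par Jordan gr\^ace \`a $\mathrm{Re}(\lambda)<0$, petit arc gr\^ace \`a $\mathrm{Re}(\lambda)>-1$), identification avec $\Gamma(\lambda+1)$, puis prolongement m\'eromorphe en $\lambda$. Votre calcul est correct (j'ai v\'erifi\'e les signes et les facteurs $e^{\pm i\pi\lambda/2}$ des deux c\^ot\'es $\xi>0$ et $\xi<0$), et la d\'eduction de la seconde identit\'e par la sym\'etrie $x\mapsto -x$ est une simplification \'el\'egante que le papier ne prend pas la peine d'expliciter. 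La diff\'erence notable est que le papier r\'egularise d'abord avec un facteur $e^{-\tau x}$ ($\tau>0$) de fa\c{c}on \`a travailler avec une int\'egrale absolument convergente, effectue la rotation de contour vers la demi-droite complexe d'angle $\arg(-is)$ avec $s=-x+i\tau$, puis passe \`a la limite $\tau\to 0^{+}$ dans $\mathcal{S}'(\mathbb{R})$ par continuit\'e de $\mathcal{F}_h$, ce qui produit naturellement la distribution $(x+i0)^{-\lambda-1}$ que l'on d\'ecompose ensuite en $[\xi_{+}^{-\lambda-1}]$ et $[\xi_{-}^{-\lambda-1}]$. Vous, vous calculez directement l'int\'egrale impropre non absolument convergente pour $\xi\neq 0$ fix\'e et invoquez la convergence au sens des distributions sans la justifier : c'est le point le plus fragile de votre r\'edaction. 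Pour le rendre rigoureux, il faudrait soit reprendre la r\'egularisation du papier, soit argumenter que l'int\'egrale impropre converge uniform\'ement sur les compacts de $\mathbb{R}^{*}$ vers une fonction localement int\'egrable, qui co\"incide donc avec la transform\'ee de Fourier distributionnelle. Enfin, votre remarque sur les parties finies d'Hadamard aux entiers strictement positifs est hors sujet puisque l'\'enonc\'e exclut pr\'ecis\'ement $\lambda\in\mathbb{Z}^{*}$ ; en revanche le cas $\lambda=0$ (inclus dans $\mathbb{C}\setminus\mathbb{Z}^{*}$) m\'erite bien une interpr\'etation r\'egularis\'ee du membre de droite, point que ni vous ni le papier ne relevez.
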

De ce lemme, on en déduit l'égalité suivante :\[
\mathcal{F}_{h}\left(\varphi_{1}\right)(\xi)=\mathcal{F}_{h}\left(\left[x_{+}^{-\frac{1}{2}+i\frac{\varepsilon}{h}}\right]\right)(\xi)=\frac{i\Gamma(\frac{1}{2}+i\frac{\varepsilon}{h})}{\sqrt{2\pi}}h^{i\frac{\varepsilon}{h}}\left[e^{-\frac{\pi}{2}\frac{\varepsilon}{h}}e^{-i\frac{\pi}{4}}\xi_{-}^{-\frac{1}{2}-i\frac{\varepsilon}{h}}-e^{i\frac{\pi}{4}}e^{i\frac{\pi}{2}\frac{\varepsilon}{h}}\xi_{+}^{-\frac{1}{2}-i\frac{\varepsilon}{h}}\right].\]
En appliquant à nouveau $\mathcal{F}_{h}$ on a :\[
\left(\mathcal{F}_{h}\circ\mathcal{F}_{h}\right)(\varphi_{1})(x)=\frac{i\Gamma(\frac{1}{2}+i\frac{\varepsilon}{h})}{\sqrt{2\pi}}h^{i\frac{\varepsilon}{h}}\left[e^{-\frac{\pi}{2}\frac{\varepsilon}{h}}e^{-i\frac{\pi}{4}}\mathcal{F}_{h}\left(\left[\xi_{-}^{-\frac{1}{2}-i\frac{\varepsilon}{h}}\right]\right)(x)-e^{i\frac{\pi}{4}}e^{\frac{\pi}{2}\frac{\varepsilon}{h}}\mathcal{F}_{h}\left(\left[\xi_{+}^{-\frac{1}{2}-i\frac{\varepsilon}{h}}\right]\right)(x)\right]\]
ie : \[
\varphi_{1}(-x)=\frac{i\Gamma(\frac{1}{2}+i\frac{\varepsilon}{h})}{\sqrt{2\pi}}h^{i\frac{\varepsilon}{h}}\left[e^{-\frac{\pi}{2}\frac{\varepsilon}{h}}e^{-i\frac{\pi}{4}}\mathcal{F}_{h}\left(\varphi_{2}^{*}\right)(x)-e^{i\frac{\pi}{4}}e^{\frac{\pi}{2}\frac{\varepsilon}{h}}\mathcal{F}_{h}\left(\varphi_{1}^{*}\right)(x)\right]\]
\[
=\frac{i\Gamma(\frac{1}{2}+i\frac{\varepsilon}{h})}{\sqrt{2\pi}}h^{i\frac{\varepsilon}{h}}\left[e^{-\frac{\pi}{2}\frac{\varepsilon}{h}}\varphi_{4}(-x)-e^{i\frac{\pi}{2}}e^{\frac{\pi}{2}\frac{\varepsilon}{h}}\varphi_{3}(-x)\right]\]
et donc :

\[
\varphi_{1}(x)=\frac{\Gamma(\frac{1}{2}+i\frac{\varepsilon}{h})e^{\frac{\pi}{2}\frac{\varepsilon}{h}}}{\sqrt{2\pi}}h^{i\frac{\varepsilon}{h}}\left[\varphi_{3}(x)+ie^{-\pi\frac{\varepsilon}{h}}\varphi_{4}(x)\right].\]
De même on montre que \[
\varphi_{2}(x)=\frac{\Gamma(\frac{1}{2}+i\frac{\varepsilon}{h})e^{\frac{\pi}{2}\frac{\varepsilon}{h}}}{\sqrt{2\pi}}h^{i\frac{\varepsilon}{h}}\left[\varphi_{4}(x)+ie^{-\pi\frac{\varepsilon}{h}}\varphi_{3}(x)\right].\]
Par conséquent:\[
Q=\mathcal{E}\left(\begin{array}{cc}
1 & ie^{-\frac{\varepsilon}{h}\pi}\\
ie^{-\frac{\varepsilon}{h}\pi} & 1\end{array}\right)\textrm{ avec }\mathcal{E}=\frac{\Gamma(\frac{1}{2}+i\frac{\varepsilon}{h})}{\sqrt{2\pi}}e^{\frac{\varepsilon}{h}(\frac{\pi}{2}+\ln(h))}.\]
Pour finir la démonstration, il reste juste à vérifier que :

\[
\frac{\Gamma(\frac{1}{2}+i\frac{\varepsilon}{h})}{\sqrt{2\pi}}e^{\frac{\varepsilon}{h}(\frac{\pi}{2}+\ln(h))}=\frac{1}{\sqrt{1+e^{-2\pi\frac{\varepsilon}{h}}}}e^{i\arg(\Gamma(\frac{1}{2}+i\frac{\varepsilon}{h}))+i\frac{\varepsilon}{h}\ln(h)}\]
en effet comme\[
\arg\left(\Gamma\left(\frac{1}{2}+i\frac{\varepsilon}{h}\right)\right)=-i\ln\left(\Gamma\left(\frac{1}{2}+i\frac{\varepsilon}{h}\right)\right)+i\ln\left(\left|\Gamma(\frac{1}{2}+i\frac{\varepsilon}{h})\right|\right)\]
on a donc\[
e^{i\arg(\Gamma(\frac{1}{2}+i\frac{\varepsilon}{h}))}=\frac{\Gamma(\frac{1}{2}+i\frac{\varepsilon}{h})}{\left|\Gamma(\frac{1}{2}+i\frac{\varepsilon}{h})\right|}.\]
Et comme \[
\left|\Gamma\left(\frac{1}{2}+i\frac{\varepsilon}{h}\right)\right|^{2}=\Gamma\left(\frac{1}{2}+i\frac{\varepsilon}{h}\right){\displaystyle \overline{\Gamma\left(\frac{1}{2}+i\frac{\varepsilon}{h}\right)}}=\Gamma\left(\frac{1}{2}+i\frac{\varepsilon}{h}\right)\Gamma\left(\frac{1}{2}-i\frac{\varepsilon}{h}\right)\]
en appliquant la formule des compléments on a : \[
\left|\Gamma\left(\frac{1}{2}+i\frac{\varepsilon}{h}\right)\right|^{2}=\frac{\pi}{\cos(\pi i\frac{\varepsilon}{h})}=\frac{\pi}{\cosh(\pi\frac{\varepsilon}{h})}\]
et donc \[
e^{i\arg(\Gamma(\frac{1}{2}+i\frac{\varepsilon}{h}))}=\frac{\Gamma(\frac{1}{2}+i\frac{\varepsilon}{h})}{\sqrt{2\pi}}\sqrt{e^{\pi\frac{\varepsilon}{h}}+e^{-\frac{\varepsilon}{h}\pi}}\]
ainsi \[
\frac{1}{\sqrt{1+e^{-2\pi\frac{\varepsilon}{h}}}}e^{i\arg(\Gamma(\frac{1}{2}+i\frac{\varepsilon}{h}))}=\frac{\Gamma(\frac{1}{2}+i\frac{\varepsilon}{h})}{\sqrt{2\pi}}e^{\frac{\varepsilon}{h}\frac{\pi}{2}}.\]
Ce qui montre le théorème 3.8. $\square$

Revenons maintenant à l'étude de ${\displaystyle \left(P_{h}-EI_{d}\right)}u_{h}=O(h^{\infty})$:
si $u_{h}$ est une solution globale non triviale, en se plaçant sur
l'ouvert $\Omega_{0}$ où la forme normale est valide, il existe alors
$x_{1,}x_{2},x_{3,}x_{4}\in\left(\mathbb{C}_{h}\right)^{4}$ tels
que $U^{-1}u_{h}=x_{1}\varphi_{1}+x_{2}\varphi_{2}=x_{3}\varphi_{3}+x_{4}\varphi_{4}.$
Ensuite en posant pour tout indice $j\in\left\{ 1,2,3,4\right\} ,$
$\phi_{j}:={\displaystyle U}\varphi_{j}$ , les deux familles $\mathcal{C}:=\{\phi_{1},\phi_{2}\}$
et $\mathcal{C}^{\prime}:=\{\phi_{3},\phi_{4}\}$ sont deux bases
de solutions de $\left(P_{h}-EI_{d}\right)u_{h}=O(h^{\infty})$ dans
l'ouvert $\Omega_{0}$. Donc, dans $\Omega_{0}$ on a $u_{h}=x_{1}\phi_{1}+x_{2}\phi_{2}=x_{3}\phi_{3}+x_{4}\phi_{4}.$
Et ainsi on a alors la relation matrice-vecteur suivante:

\textit{\begin{equation}
\left(\begin{array}{c}
x_{3}\\
x_{4}\end{array}\right)=Q\left(\begin{array}{c}
x_{1}\\
x_{2}\end{array}\right).\label{eq:}\end{equation}
}

\subsection*{Seconde étape : Étude globale}

Toutes les fibres \foreignlanguage{english}{$\Lambda_{E}:=p^{-1}(E)$
sont compactes, pour $E\neq0$, la fibre }$\Lambda_{0}:=p^{-1}(0)$
étant l'unique fibre singulière du feuilletage. L'ensemble $\Upsilon_{0}:=p^{-1}(0)-\Omega_{0}$
est une partie régulière de la fibre $\Lambda_{0}$, pour $E\in[-1,1]$
le faisceau des solutions microlocales de ${\displaystyle {\displaystyle \left(P_{h}-EI_{d}\right)}}u_{h}=O(h^{\infty})$
au dessus de $\Lambda_{E}$ est un fibré plat de dimension 1 (voir\textbf{
{[}33]},\textbf{{[}34]}). 

\begin{center}
\includegraphics[scale=0.5]{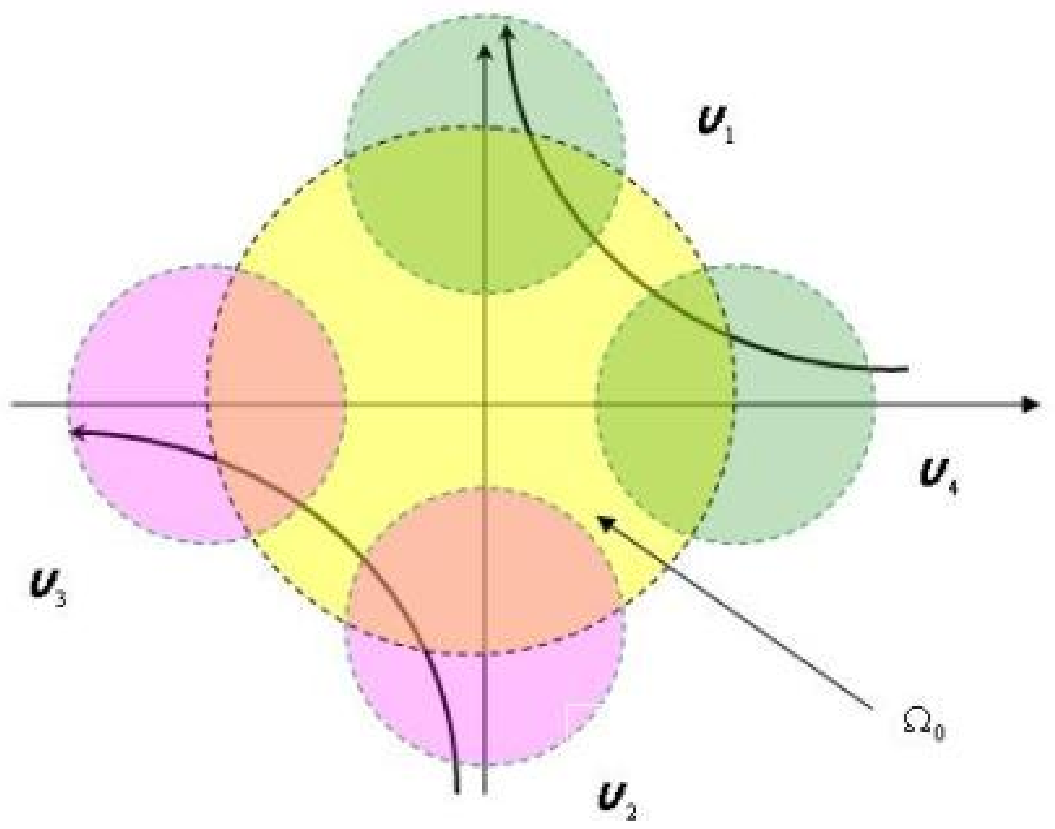}
\par\end{center}

\begin{center}
\textit{Fig. 6: Les ouverts $U_{1},U_{2},U_{3},U_{4}$ et $\Omega_{0}$.}
\par\end{center}

\vspace{+0.25cm}

La fonction $\phi_{1}$ appartient à $\mathcal{L}\left(P_{h},E,U_{1}\right)$
(ie : $\phi_{1}$ est une solution microlocale de \foreignlanguage{english}{${\displaystyle {\displaystyle \left({\displaystyle \widehat{P}}-EI_{d}\right)}}u_{h}=O(h^{\infty})$}
sur l'ouvert $U_{1}$) et la fonction $\phi_{4}$ appartient à $\mathcal{L}\left(P_{h},E,U_{4}\right)$,
donc d'après la proposition 2.9, il y a alors une unique façon de
prolonger (voir la sous-section 2.4) la solution $\phi_{1}$ le long
de la courbe en évitant la singularité pour arriver sur l'ouvert $U_{4}$;
la solution finale ${\displaystyle \widetilde{\phi_{1}}}$ diffère
alors de la solution $\phi_{4}$ par un facteur de phase (voir la
sous-section 2.4) : ${\displaystyle \widetilde{\phi_{1}}}=e^{iS^{+}(E)/h}\phi_{4}$
où la fonctions $S_{+}$ admet un développement asymptotique en puissance
de $h$ : $S^{+}(E)={\displaystyle \sum_{i=0}^{\infty}S_{j}^{+}(E)h^{j}}$
avec des coefficients \foreignlanguage{english}{$S_{j}^{+}$ qui sont}
$\mathcal{C}^{\infty}$ par rapport à la variable $E$. De la même
façon on a que ${\displaystyle \widetilde{\phi_{2}}}=e^{iS^{-}(E)/h}\phi_{3}$
avec aussi une fonctions $S_{-}$ ayant un développement asymptotique
en puissance de $h$ : $S^{-}(E)={\displaystyle \sum_{i=0}^{\infty}S_{j}^{-}(E)h^{j}}$
avec des coefficients \foreignlanguage{english}{$S_{j}^{-}$ qui sont}
$\mathcal{C}^{\infty}$ par rapport à la variable $E$. Ces deux séries
formelles $S^{+/-}$ sont appelées actions singulières. On posera
pour la suite\[
\theta_{+/-}(E):=\frac{S^{+/-}(E)}{h}.\]
A ce stade là, il ne reste plus qu'écrire les relations locales et
globales pour montrer le théorème : soit $u_{h}$ une solution globale
de ${\displaystyle {\displaystyle {\displaystyle \left({\displaystyle P_{h}}-EI_{d}\right)}}}u_{h}=O(h^{\infty})$,
telle que sur chacun des ouverts $U_{1},U_{2},U_{3},U_{4}$ (voir
Figure 6) on ait :\[
\forall j\in\left\{ 1,2,3,4\right\} ,\; u_{h\mid U_{j}}=x_{j}\phi_{j}\]
on a alors que : \[
\left(\begin{array}{c}
x_{3}\\
x_{4}\end{array}\right)=Q\left(\begin{array}{c}
x_{1}\\
x_{2}\end{array}\right)\textrm{ et }\left(\begin{array}{c}
x_{3}\\
x_{4}\end{array}\right)=\left(\begin{array}{cc}
0 & e^{i\theta_{-}(E)}\\
e^{i\theta_{+}(E)} & 0\end{array}\right)\left(\begin{array}{c}
x_{1}\\
x_{2}\end{array}\right).\]
Ainsi il existe une fonction $u_{h}=x_{1}\phi_{1}+x_{2}\phi_{2}=x_{3}\phi_{4}+x_{4}\phi_{4}$
solution globale non triviale de ${\displaystyle {\displaystyle {\displaystyle \left(P_{h}-EI_{d}\right)}}}u_{h}=O(h^{\infty})$
si et seulement si :\[
\det\left(Q-\left(\begin{array}{cc}
0 & e^{i\theta_{-}(E)}\\
e^{i\theta_{+}(E)} & 0\end{array}\right)\right)=0\]
\[
\Leftrightarrow\det\left(Q\left(\begin{array}{cc}
0 & e^{-i\theta_{+}(E)}\\
e^{-i\theta_{-}(E)} & 0\end{array}\right)-I_{2}\right)=0\]
\[
\Leftrightarrow1\in Spec(T(\mathcal{E}))\]
où on a posé\[
T(\mathcal{E}):=Q\left(\begin{array}{cc}
0 & e^{-i\theta_{+}(E)}\\
e^{-i\theta_{-}(E)} & 0\end{array}\right)=\mathcal{E}\left(\begin{array}{cc}
e^{-i\theta_{-}(E)}ie^{-\varepsilon(E)\pi/h} & e^{-i\theta_{+}(E)}\\
e^{-i\theta_{-}(E)} & e^{-i\theta_{+}(E)}ie^{-\varepsilon(E)\pi/h}\end{array}\right)\]
\[
=\mathcal{E}\left(\begin{array}{cc}
e^{-i\theta_{-}(E)}ie^{-\varepsilon(E)\pi/h} & e^{-i\theta_{+}(E)}\\
e^{-i\theta_{-}(E)} & e^{-i\theta_{+}(E)}ie^{-\varepsilon(E)\pi/h}\end{array}\right).\]
Et maintenant à ce stade là, pour conclure on utilise le lemme 1 de
\textbf{{[}8]}, rappelons le :

\begin{lem}
\textbf{{[}8]} Soit U une matrice unitaire de $M_{2}(\mathbb{C})$,
où $U=\left(\begin{array}{cc}
a & b\\
c & d\end{array}\right)$, tels que $U\neq\left(\begin{array}{cc}
0 & e^{i\theta_{1}}\\
e^{-i\theta_{2}} & 0\end{array}\right)$; alors\[
1\in Spec(U)\Leftrightarrow\left|a\right|\cos\left(\frac{\arg(da)}{2}-\arg(a)\right)=\cos\left(\frac{\arg(da)}{2}\right),\;\left|d\right|=\left|a\right|.\]

\end{lem}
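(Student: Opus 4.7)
L'approche naturelle consiste à traduire $1\in\textrm{Spec}(U)$ par $\det(U-I_2)=0$, c'est-à-dire $a+d-1=\det(U)$, puis à exploiter l'unitarité de $U$ sous deux formes : d'une part les égalités $|a|=|d|$ et $|b|=|c|$ issues de $U^*U=I_2$ ; d'autre part l'identité $\det(U)=d/\bar a$, qui en découle via l'orthogonalité $\bar a b+\bar c d=0$ et la relation $|a|^2+|c|^2=1$. L'exclusion de la forme antidiagonale garantit $a\neq 0$, condition nécessaire pour que $\arg(a)$ soit défini et que $d/\bar a$ ait un sens.

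\emph{Développement trigonométrique.} On pose $a=|a|e^{i\alpha}$ et $d=|a|e^{i\delta}$. En multipliant l'équation $a+d-1=d/\bar a$ par $\bar a$, il vient $|a|^2+\bar a d=\bar a+d$. Ses parties réelle et imaginaire fournissent
\[
|a|^2\bigl(1+\cos(\alpha-\delta)\bigr)=|a|\bigl(\cos\alpha+\cos\delta\bigr),\qquad |a|^2\sin(\alpha-\delta)=|a|\bigl(\sin\alpha-\sin\delta\bigr).
\]
En appliquant les formules $1+\cos(\alpha-\delta)=2\cos^2\!\bigl(\tfrac{\alpha-\delta}{2}\bigr)$, $\sin(\alpha-\delta)=2\sin\!\bigl(\tfrac{\alpha-\delta}{2}\bigr)\cos\!\bigl(\tfrac{\alpha-\delta}{2}\bigr)$ et les identités somme-en-produit, ces deux relations se factorisent (après simplification par $|a|\neq 0$) en
\[
2\cos\!\bigl(\tfrac{\alpha-\delta}{2}\bigr)\cdot\Delta=0,\qquad 2\sin\!\bigl(\tfrac{\alpha-\delta}{2}\bigr)\cdot\Delta=0,
\]
où l'on a posé $\Delta:=|a|\cos\!\bigl(\tfrac{\alpha-\delta}{2}\bigr)-\cos\!\bigl(\tfrac{\alpha+\delta}{2}\bigr)$. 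Sinus et cosinus d'un même angle ne s'annulant jamais simultanément, le système équivaut à $\Delta=0$.

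\emph{Identification finale et obstacle.} Comme $\arg(da)=\alpha+\delta$ modulo $2\pi$, on a $\arg(da)/2-\arg(a)=-(\alpha-\delta)/2$, et la parité du cosinus donne $\cos(\arg(da)/2-\arg(a))=\cos((\alpha-\delta)/2)$. L'égalité $\Delta=0$ coïncide alors exactement avec la condition $|a|\cos(\arg(da)/2-\arg(a))=\cos(\arg(da)/2)$ de l'énoncé, tandis que $|d|=|a|$ est acquise d'office par unitarité. La difficulté principale sera essentiellement notationnelle : il faut s'assurer que les déterminations d'argument sont choisies de manière cohérente modulo $2\pi$, les identités en demi-angles restant en réalité robustes à ce choix puisque $\alpha+\delta$ et $\arg(da)$ ne diffèrent que d'un multiple de $2\pi$, ce qui fait basculer simultanément les deux membres de l'équation.
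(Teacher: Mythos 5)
The paper does not prove this lemma; it quotes it from reference \textbf{[8]} (Colin de Verdi\`ere--Parisse) with the phrase \og rappelons-le \fg, so there is no internal proof to compare against. Your proposal is therefore a self-contained reconstruction, and it is correct.

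A few checks on the substance. The chain $1\in\mathrm{Spec}(U)\Leftrightarrow\det(U-I_2)=0\Leftrightarrow a+d-1=\det U$ is exact, since $\det(U-I_2)=\det U-(a+d)+1$ for any $2\times2$ matrix. The facts $|a|=|d|$, $|b|=|c|$ and $\det U=d/\bar a$ all follow from $U^*U=UU^*=I_2$ as you say; in particular $\det U=d/\bar a$ comes from $\bar a b=-\bar c d$ and $|a|^2+|c|^2=1$. The excluded antidiagonal form is exactly the case $a=0$ (which by $|a|=|d|$ forces $d=0$ and $|b|=|c|=1$), so $a\neq0$ and the multiplication by $\bar a$ and the division by $|a|$ are legitimate. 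Writing $a=|a|e^{i\alpha}$, $d=|a|e^{i\delta}$ and taking real and imaginary parts of $|a|^2+\bar a d=\bar a+d$ does factor, via half-angle and sum-to-product identities, into
\[
\cos\!\Bigl(\tfrac{\alpha-\delta}{2}\Bigr)\,\Delta=0,\qquad \sin\!\Bigl(\tfrac{\alpha-\delta}{2}\Bigr)\,\Delta=0,
\]
with $\Delta=|a|\cos\!\bigl(\tfrac{\alpha-\delta}{2}\bigr)-\cos\!\bigl(\tfrac{\alpha+\delta}{2}\bigr)$, and since $\sin$ and $\cos$ of the same angle never vanish simultaneously the system is indeed equivalent to $\Delta=0$. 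Your remark on the choice of determination of $\arg(da)$ is the right one: replacing $\alpha+\delta$ by $\alpha+\delta+2\pi k$ multiplies both sides of the displayed equality by $(-1)^k$, so the condition is well defined, and $|d|=|a|$ is automatic for a unitary matrix (this is why it appears as a reminder rather than an extra constraint). The argument is elementary, complete, and goes in both directions, so it establishes the stated equivalence.
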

En appliquant ce lemme à la matrice $T(\mathcal{E})$ on arrive bien
à :\[
1\in Spec(T(\mathcal{E}))\Leftrightarrow\]
\[
\frac{1}{\sqrt{1+e^{\frac{2\pi\varepsilon}{h}}}}\cos\left(\frac{\theta_{+}-\theta_{-}}{2}\right)=\cos\left(-\frac{\theta_{+}+\theta_{-}}{2}+\frac{\pi}{2}+\frac{\varepsilon}{h}\ln(h)+\arg\left(\Gamma\left(\frac{1}{2}+i\frac{\varepsilon}{h}\right)\right)\right).\]
Ce qui donne bien la formule proposée dans le théorème 3.2.

\begin{rem}
Dans \textbf{{[}10]}, Y. Colin de Verdière et B. Parisse montre que
dans le cas où $E=\lambda h$ avec $\lambda\in$$\left[-1,1\right]$,
les actions singulières peuvent s'écrire avec des invariants symplectiques
:\[
S_{0}^{+/-}(E)=A_{+/-}(E)+\varepsilon_{0}(E)\ln\left|\varepsilon_{0}(E)\right|-\varepsilon_{0}(E)\]
où $A_{+/-}(E):={\displaystyle \int}_{p=E,+/-}\xi\, dx$ est l'intégrale
d'action de la courbe $p^{-1}(E)$ du puit $+/-$.
\end{rem}

\subsection{Du singulier au régulier}

Toujours dans \textbf{{[}10]}, les auteurs examinent le lien entre
le cas régulier et singulier. Soient $E_{+},E_{-}\in\left[-1,1\right]^{2}$
tels que $0<E_{-}<E_{+}$ avec $E_{-}>\epsilon$ où $\epsilon$ est
un réel strictement positif indépendant de $h$.

\subsection*{Haut de spectre}

C'est le cas où $E\in I_{+}:=\left[E_{-},E_{+}\right]$, l'ensemble
$J_{+}:=p^{-1}\left(I_{+}\right)$ est alors un anneau topologique.
Pour tout $E\in I_{+},$ $\lim_{h\rightarrow0^{+}}\frac{\varepsilon}{h}=+\infty$
donc en utilisant la formule de Stirling, pour $h\rightarrow0$ on
a\[
\arg\left(\Gamma\left(\frac{1}{2}+i\frac{\varepsilon}{h}\right)\right)=\frac{\varepsilon}{h}\ln\left|\frac{\varepsilon}{h}\right|-\frac{\varepsilon}{h}+o(1)\]
d'où pour $h\rightarrow0$ : \[
-\frac{\theta_{+}+\theta_{-}}{2}+\frac{\pi}{2}+\frac{\varepsilon}{h}\ln(h)+\arg\left(\Gamma\left(\frac{1}{2}+i\frac{\varepsilon}{h}\right)\right)=-\frac{1}{h}A_{+/-}(E)+\frac{\pi}{2}+o(1).\]
D'autre part comme $\lim_{h\rightarrow0^{+}}\frac{1}{\sqrt{1+e^{\frac{2\pi\varepsilon}{h}}}}=0$;
l'asymptotique de la formule (3.1) est :\[
0=\cos\left(-\frac{A(E)}{2h}+\frac{\pi}{2}+o(1)\right)\]
où $A(E)=A_{+}(E)+A_{-}(E)$, ce qui donne bien les règles de Bohr-Sommerfeld
régulière pour un puits : $\frac{1}{h}A(E)+o(1)\in\pi\mathbb{Z}$.

\subsection*{Bas de spectre}

C'est le cas où $E\in I_{-}:=\left[-E_{+},-E_{-}\right]$, l'ensemble
$J_{-}:=p^{-1}\left(I_{-}\right)$ est alors la réunion de deux anneaux
topologique. Pour tout $E\in I_{-},$ $\lim_{h\rightarrow0^{+}}\frac{\varepsilon}{h}=-\infty$
donc toujours avec la formule de Stirling, pour $h\rightarrow0$ :\[
\arg\left(\Gamma\left(\frac{1}{2}+i\frac{\varepsilon}{h}\right)\right)=\frac{\varepsilon}{h}\ln\left|\frac{\varepsilon}{h}\right|-\frac{\varepsilon}{h}+o(1)\]
d'où pour $h\rightarrow0$ : \[
-\frac{\theta_{+}+\theta_{-}}{2}+\frac{\pi}{2}+\frac{\varepsilon}{h}\ln(h)+\arg\left(\Gamma\left(\frac{1}{2}+i\frac{\varepsilon}{h}\right)\right)=-\frac{1}{h}A_{+/-}(E)+\frac{\pi}{2}+o(1).\]
D'autre part comme $\lim_{h\rightarrow0^{+}}\frac{1}{\sqrt{1+e^{\frac{2\pi\varepsilon}{h}}}}=1$;
l'asymptotique de la formule (3.1) est :\[
\cos\left(\frac{A_{+}(E)-A_{-}(E)}{2h}+O(1)\right)=\cos\left(-\frac{A_{+}(E)+A_{-}(E)}{2h}+\frac{\pi}{2}+o(1)\right)\]
ce qui implique donc \[
\left\{ \begin{array}{cc}
\frac{A_{+}(E)}{h}+\frac{\pi}{2}+O(1)\in2\pi\mathbb{\mathbb{Z}}\\
\textrm{et}\\
\frac{A_{-}(E)}{h}+\frac{\pi}{2}+O(1)\in2\pi\mathbb{\mathbb{Z}}.\end{array}\right.\]
Ce sont bien les règles de Bohr-Sommerfeld régulières pour les deux
puits.

\section{La forme du spectre autour de la singularité}

\subsection{Introduction et résultats}

On va dans cette partie utiliser la formule du théorème 3.1 pour en
déduire des informations sur le spectre semi-classique autour de l'origine
de l'opérateur :\[
P_{h}:=-\frac{h^{2}}{2}\Delta+V.\]
Précisément on va démontrer le principal théorème de cette article
:

\begin{thm}
Le spectre semi-classique de l'opérateur ${\displaystyle P_{h}}$
sur le compact $\left[-\sqrt{h},\sqrt{h}\right]$ s'écrit comme la
réunion disjointe \[
\left(\alpha_{k}(h)\right)_{k\in I_{h}}\bigsqcup\left(\beta_{l}(h)\right)_{l\in J_{h}}\]
de deux familles $\left(\alpha_{k}(h)\right)_{k}$ et $\left(\beta_{l}(h)\right)_{l}$
s'écrivant $\alpha_{k}(h):=\sqrt{h}\mathcal{A}_{h}(2\pi k),\,\beta_{l}(h):=\sqrt{h}\mathcal{B}_{h}(2\pi l)$,
les fonctions $\mathcal{A}_{h}$ et $\mathcal{B}_{h}$ étant de classe
$\mathcal{C}^{\infty}$. De plus les familles $\left(\alpha_{k}(h)\right)_{k}$
et $\left(\beta_{l}(h)\right)_{l}$ sont strictement décroissantes
et en quinconce :\[
\beta_{k+1}(h)<\alpha_{k}(h)<\beta_{k}(h)<\alpha_{k-1}(h).\]
En outre l'interstice spectral est de l'ordre de $O(h/\left|\ln(h)\right|)$
: il existent $C,C^{\prime}$ deux constantes réelles strictement
positives telles que :\[
C\frac{h}{\left|\ln(h)\right|}\leq\left|\alpha_{k+1}(h)-\alpha_{k}(h)\right|\leq C^{\prime}\frac{h}{\left|\ln(h)\right|}.\]
De même pour la famille $\left(\beta_{l}(h)\right)_{l\in J_{h}}.$
\end{thm}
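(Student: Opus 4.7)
On part de l'équation du Théorème 3.2, réécrite sous la forme $\cos(b(E)) = \tau(E)\cos(a(E))$ en posant $\mu := \varepsilon(E)/h$ et
\[
\tau := \tfrac{1}{\sqrt{1+e^{2\pi\mu}}} \in (0,1), \quad a := \tfrac{\theta_+ - \theta_-}{2}, \quad b := -\tfrac{\theta_+ + \theta_-}{2} + \tfrac{\pi}{2} + \mu\ln h + \arg\Gamma\bigl(\tfrac{1}{2}+i\mu\bigr).
\]
Comme $\tau<1$ strictement, $|\tau\cos a|<1$ partout, et l'équation est équivalente à la disjonction $\Phi_+(E)\in 2\pi\mathbb{Z}$ ou $\Phi_-(E)\in 2\pi\mathbb{Z}$, où $\Phi_\pm(E) := b(E) \mp \arccos(\tau(E)\cos(a(E)))$. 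Les deux familles de l'énoncé seront $\alpha_k := \Phi_+^{-1}(2\pi k)$ et $\beta_l := \Phi_-^{-1}(2\pi l)$.

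\textbf{Dérivée de $b$ et période classique.} La formule de Stirling donne $\frac{d}{d\mu}\arg\Gamma(1/2+i\mu) = \ln|\mu| + O(1/\mu^2)$ pour $|\mu|$ grand, d'où
\[
b'(E) = -\tfrac{\theta_+'(E)+\theta_-'(E)}{2} + \tfrac{\varepsilon'(E)}{h}\ln|\varepsilon(E)| + O(1).
\]
En utilisant la Remarque 3.9, $S_0^\pm = A_\pm + \varepsilon_0\ln|\varepsilon_0| - \varepsilon_0$, on a $(\theta_+'+\theta_-')/2 = A'/(2h) + \varepsilon_0'\ln|\varepsilon_0|/h + O(1)$ avec $A := A_+ + A_-$. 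Comme $\varepsilon - \varepsilon_0 = O(h)$ et $|\varepsilon_0| \gtrsim \sqrt h$ sur l'essentiel de $[-\sqrt h, \sqrt h]$ (d'où $\ln|\varepsilon| - \ln|\varepsilon_0| = O(\sqrt h)$), les termes logarithmiques principaux s'annulent :
\[
b'(E) = -\tfrac{A'(E)}{2h} + O(h^{-1/2}).
\]
Or, un calcul classique (via la substitution $x = \sqrt{2|E|}\sinh(\theta)/\omega$ avec $\omega := \sqrt{-V''(0)}$) donne $A'(E) = T(E) = -2\ln|E|/\omega + O(1)$ quand $E\to 0$. Pour $E\in[-\sqrt h,\sqrt h]$ ceci entraîne $|b'(E)| \asymp |\ln h|/h$, ce qui est la clé de l'estimation de l'interstice.

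\textbf{Monotonie, paramétrisation et entrelacement.} Le correctif $\partial_E\arccos(\tau\cos a) = -(\tau\cos a)'/\sqrt{1-\tau^2\cos^2 a}$ se majore par $O(1/h)$ grâce à l'identité $1-\tau^2\cos^2 a = \tau^2(e^{2\pi\mu}+\sin^2 a)$ combinée aux bornes $|\tau'/\tau| \leq \pi|\mu'| = O(1/h)$ et $|a'| = O(1/h)$. Ce terme est négligeable devant $|b'|\asymp|\ln h|/h$, donc $\Phi_\pm$ est un difféomorphisme $\mathcal{C}^\infty$ strictement décroissant de $[-\sqrt h,\sqrt h]$ sur son image. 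Posant $\mathcal{A}_h(s) := \Phi_+^{-1}(s)/\sqrt h$ et $\mathcal{B}_h(s) := \Phi_-^{-1}(s)/\sqrt h$, on obtient par le théorème d'inversion locale des fonctions lisses, avec $\alpha_k = \sqrt h\,\mathcal{A}_h(2\pi k)$ et $\beta_l = \sqrt h\,\mathcal{B}_h(2\pi l)$. L'identité $\Phi_-(E) - \Phi_+(E) = 2\arccos(\tau\cos a) \in (0, 2\pi)$ et la décroissance stricte des $\Phi_\pm$ entraînent l'entrelacement $\beta_{k+1}<\alpha_k<\beta_k<\alpha_{k-1}$. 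Enfin $|\alpha_{k+1}-\alpha_k| = 2\pi/|\Phi_+'(\xi_k)| \asymp h/|\ln h|$ par le théorème des accroissements finis, d'où les deux bornes annoncées.

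\textbf{Obstacle principal.} Le c\oe ur technique est l'estimation $b'(E) = -A'(E)/(2h) + O(h^{-1/2})$ : plusieurs termes de taille $|\ln h|/h$ provenant respectivement de $(\theta_++\theta_-)'/2$, de $\mu'\ln h$ et de $(\arg\Gamma)'\mu'$ doivent s'annuler exactement pour faire apparaître la période classique $A'(E)/(2h)$. Il faut justifier ces annulations uniformément sur $[-\sqrt h,\sqrt h]$, notamment dans un voisinage de $E=0$ où $\ln|\varepsilon|$ tend vers $-\infty$ et où $\mu$ reste borné (rendant inapplicable l'expansion de Stirling), ainsi qu'étendre la portée de la Remarque 3.9 (énoncée pour $E=O(h)$) au régime $E=O(\sqrt h)$. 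Par ailleurs, le contrôle uniforme de $\partial_E\arccos(\tau\cos a)$, a priori singulier quand $|\tau\cos a|\to 1$ (régime $\mu\ll 0$ et $\cos a \approx \pm 1$), repose crucialement sur l'identité $1-\tau^2\cos^2 a = \tau^2(e^{2\pi\mu}+\sin^2 a)$ qui empêche toute division par une quantité évanescente.
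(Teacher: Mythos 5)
Your overall plan is the same as the paper's (invert two strictly monotone phase functions, deduce interlacing from the $\arccos$ offset, get the gap from a derivative bound $\asymp |\ln h|/h$), and your interlacing argument via the strict inequality $\Phi_+ < \Phi_- < \Phi_+ + 2\pi$ combined with monotonicity is actually slicker than the paper's (which derives $\beta_k < \alpha_{k-1}$ by a mean-value-theorem detour in Prop.~4.10). The crucial difference, and the genuine gap you flag yourself, is in the derivative estimate.

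Your central claim $b'(E) = -A'(E)/(2h) + O(h^{-1/2})$ is \emph{not} uniformly true on $[-\sqrt h,\sqrt h]$, and not merely for technical reasons: near $E=0$ the classical period derivative $A'(E) \sim -2\ln|E|/\sqrt{-V''(0)}$ diverges, while $b'(E)$ does \emph{not} — when $\mu=\varepsilon/h$ stays bounded, $\arg\Gamma(\tfrac12+i\mu)$ contributes only $O(1/h)$ (digamma is bounded on compacts), and $b'(E)\approx \varepsilon'\ln h/h + O(1/h)$. So your formula gives the wrong answer where $|E|\ll h$, which is inside your interval. You would need to split the analysis into $|E|\gtrsim h$ (Stirling valid) and $|E|\lesssim h$ (Remarque~3.9 regime, digamma bounded) and match across; you have not done so. Relatedly, the Remarque~3.9 decomposition $S_0^\pm=A_\pm+\varepsilon_0\ln|\varepsilon_0|-\varepsilon_0$ is precisely the statement that $S_0^\pm$ is smooth while $A_\pm$ and $\varepsilon_0\ln|\varepsilon_0|$ are separately singular at $E=0$; re-separating them in $\theta_\pm'$ reintroduces the singularity you then have to cancel.

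The paper sidesteps this entirely by a different bookkeeping. It works with the rescaled variable $\lambda$ via $E=\lambda h^\alpha$, $\alpha\in[\tfrac12,1)$, and estimates the four pieces of $\mathcal{Y}_h'(\lambda)$ separately without ever invoking $A_\pm$: the action term $-h^\alpha(\theta_+'+\theta_-')/2$ is a clean $O(h^{\alpha-1})$ because the $S_j^\pm$ are $\mathcal{C}^\infty$ near $E=0$ (no splitting), the dominant $h^{\alpha-1}\ln h/\sqrt{-V''(0)}$ comes directly from the $\mu\ln h$ term, and the $\arg\Gamma$ contribution is handled exactly by the digamma function $\Psi$: since $y\mapsto\mathrm{Re}\,\Psi(\tfrac12+iy)$ is even and increasing, with $\mathrm{Re}\,\Psi(\tfrac12+iy)=\ln|y|+O(y^{-2})$ for large $|y|$ and bounded for $|y|=O(1)$, one gets the two-sided sandwich $\frac{h^{\alpha-1}\ln h}{\sqrt{-V''(0)}}+O(h^{\alpha-1})\le\mathcal{Y}_h'(\lambda)\le\frac{\alpha h^{\alpha-1}\ln h}{\sqrt{-V''(0)}}+O(h^{\alpha-1})$ uniformly in $\lambda\in[-1,1]$, \emph{including} the point $\lambda=0$ where $\mu$ is bounded. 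This is exactly the uniformity your Stirling-plus-$A'$ route fails to deliver, and it is the reason the paper never cites Remarque~3.9 in the proof of Theorem~4.1 — that remark is an interpretation, not a tool here.
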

Qui a pour conséquence immédiate le :

\begin{cor}
Le nombre de valeurs propres de l'opérateur $P_{h}$ dans le compact
$\left[-\sqrt{h},\sqrt{h}\right]$ est de l'ordre de $\left|\ln(h)\right|/\sqrt{h}$.
\end{cor}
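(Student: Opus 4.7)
Partons de la formule (3.1) avec $E=\sqrt{h}\,\lambda$, $\lambda\in[-1,1]$. En posant $a(E):=(1+e^{2\pi\varepsilon/h})^{-1/2}\in[0,1)$, $\psi(E):=\tfrac{1}{2}(\theta_+(E)-\theta_-(E))$ et $\Phi(E)$ l'argument du cosinus du membre de droite de (3.1), l'\'equation devient $\cos\Phi(E)=a(E)\cos\psi(E)$. Comme $|a\cos\psi|<1$ sur la fen\^etre, elle se scinde en deux conditions de phase
\[
\Phi_\pm(E):=\Phi(E)\mp\arccos\bigl(a(E)\cos\psi(E)\bigr)\in 2\pi\mathbb{Z},
\]
qui fourniront les deux familles $(\alpha_k)$ et $(\beta_l)$.

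L'\'etape cl\'e consiste \`a montrer que $\Phi_\pm$ sont des diff\'eomorphismes strictement d\'ecroissants sur $[-\sqrt{h},\sqrt{h}]$ avec $|\Phi_\pm'(E)|\sim|\ln h|/h$ uniform\'ement. En posant $t=\varepsilon(E)/h$ et $f(t):=\tfrac{d}{dt}\arg\Gamma(\tfrac{1}{2}+it)$, il vient
\[
\Phi'(E)=-\tfrac{(S^+)'+(S^-)'}{2h}+\tfrac{\varepsilon'(E)}{h}\bigl(\ln h+f(t)\bigr).
\]
La fonction $f$ est lisse; elle est born\'ee pour $|t|\le 1$ et satisfait $f(t)=\ln|t|+O(1/|t|)$ par Stirling pour $|t|\to\infty$. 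On en d\'eduit $|\ln h+f(t)|\sim|\ln h|$ uniform\'ement sur la fen\^etre, donc $\Phi'(E)=\varepsilon'(0)\ln(h)/h\cdot(1+o(1))+O(1/h)$, quantit\'e n\'egative de module $\sim|\ln h|/h$. Par ailleurs, un calcul s\'epar\'e montre que la d\'eriv\'ee de $\arccos(a\cos\psi)$ reste $O(1/h)$, m\^eme pr\`es des z\'eros de $\sin\psi$ o\`u $\sqrt{1-a^2\cos^2\psi}$ peut \^etre petit, gr\^ace \`a la compensation entre $(a\cos\psi)'\approx-a\sin\psi\,\psi'$ et $\sqrt{1-a^2\cos^2\psi}\ge a|\sin\psi|$. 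Donc $\Phi_\pm$ h\'eritent de l'estim\'ee $|\Phi_\pm'|\sim|\ln h|/h$.

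Il reste \`a inverser : en rescalant par $\mu:=E/\sqrt{h}$, les applications $\tilde\Phi_\pm^h(\mu):=\Phi_\pm(\sqrt{h}\mu)$ sont des diff\'eomorphismes lisses strictement d\'ecroissants de $[-1,1]$ sur une image de longueur $\sim|\ln h|/\sqrt{h}$, de d\'eriv\'ee de m\^eme ordre. En notant $\mathcal{A}_h:=(\tilde\Phi_+^h)^{-1}$ et $\mathcal{B}_h:=(\tilde\Phi_-^h)^{-1}$, fonctions de classe $\mathcal{C}^\infty$, les solutions de (3.1) s'\'ecrivent pr\'ecis\'ement $\alpha_k(h)=\sqrt{h}\,\mathcal{A}_h(2\pi k)$ et $\beta_l(h)=\sqrt{h}\,\mathcal{B}_h(2\pi l)$, avec $k,l$ dans des ensembles finis $I_h,J_h$ de cardinal $\sim|\ln h|/\sqrt{h}$, d'o\`u le corollaire 4.2. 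L'encadrement $C\,h/|\ln h|\le|\alpha_{k+1}-\alpha_k|\le C'\,h/|\ln h|$ suit du th\'eor\`eme des accroissements finis, et la quinconce $\beta_{k+1}<\alpha_k<\beta_k$ provient de $\Phi_-(E)-\Phi_+(E)=2\arccos(a\cos\psi)\in(0,2\pi)$ : sur chaque intervalle $[\alpha_{k+1},\alpha_k]$, la fonction $\Phi_-$ traverse exactement une valeur de $2\pi\mathbb{Z}$.

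L'obstacle principal sera le contr\^ole fin de la d\'eriv\'ee de $\arccos(a\cos\psi)$ dans la zone de transition $|E|\lesssim h$ (o\`u $t=\varepsilon/h$ est born\'e et l'asymptotique de Stirling n'est pas directement utilisable) et pr\`es des z\'eros \'eventuels de $\sin\psi$, afin de garantir que cette d\'eriv\'ee reste n\'egligeable devant $|\ln h|/h$ uniform\'ement sur toute la fen\^etre.
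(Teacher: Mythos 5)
Your argument correctly recapitulates the proof of Theorem 4.1 (from which the paper treats Corollary 4.2 as an immediate consequence via Proposition 4.6), with your $\Phi_\pm$ being precisely the paper's $\mathcal{Y}_h,\mathcal{Z}_h$ and the digamma/Stirling analysis of $\ln h+f(t)$ matching the paper's Proposition 4.4. The \og obstacle principal \fg{} you flag at the end is resolved in the paper exactly by the compensation you already identified, namely $\sqrt{1+e^{2\pi\varepsilon/h}-\cos^2 g_h}\ge|\sin g_h|$ (equivalently $\sqrt{1-a^2\cos^2\psi}\ge a|\sin\psi|$) combined with splitting into $\lambda\gtrless 0$ to control the exponential factor, so the approach is essentially the same as the paper's.
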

Avant de démontrer le théorème 4.1 on va interpréter le terme en $\ln(h)$.

\subsection{Interprétation géométrique}

Le terme $\left|\ln(h)\right|$ est la signature de la singularité
hyperbolique: en effet géométriquement il correspond au temps de parcours
du flot classique avec un point initial situé a une distance $\sqrt{h}$
de l'origine, en effet :

\begin{thm}
Soit $m_{h}\in T^{*}\mathbb{R}$ de coordonnés $\left(\sqrt{h},0\right)$
dans le repère $\left(0,x,\xi\right).$ Alors le flot hamiltonien
associé à $p$ et de point initial $m_{h}$ est périodique et sa période
$\tau_{h}$ vérifie pour $h\rightarrow0$ l'équivalent suivant :\[
\tau_{h}\sim\frac{\ln\left(h\right)}{K}\]
où $K$ est une constante réelle non nulle et indépendante de $h$.
\end{thm}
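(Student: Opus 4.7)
Je commencerais par identifier l'orbite. L'énergie de $m_{h}$ vaut $E_{h}:=p(m_{h})=V(\sqrt{h})=\frac{V''(0)}{2}h+O(h^{2})$; comme $V''(0)<0$, on a $E_{h}<0$ proche de $0$ pour $h$ petit. La fibre $p^{-1}(E_{h})$ possède alors deux composantes connexes, chacune étant une courbe fermée lisse entourant un puits. Le point $m_{h}$ a une abscisse $x=\sqrt{h}>0$, il est donc situé sur la composante de droite, notée $\gamma_{h}$. Le flot hamiltonien préserve $p$ et reste sur $\gamma_{h}$, d'où la périodicité.

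J'exprimerais ensuite la période par la formule classique : en paramétrant par $x$ et en observant la symétrie par rapport à l'axe des $x$ (la boucle se découpe en demi-arc supérieur $\xi>0$ et inférieur $\xi<0$ avec $\xi=\pm\sqrt{2(E_{h}-V(x))}$), on a
\[
\tau_{h}=2\int_{x_{-}(h)}^{x_{+}(h)}\frac{dx}{\sqrt{2(E_{h}-V(x))}},
\]
où $x_{\pm}(h)$ sont les deux points tournants de $\gamma_{h}$ vérifiant $V(x_{\pm}(h))=E_{h}$, avec $0<x_{-}(h)<x_{+}(h)$. Par le lemme de Morse appliqué à $V$ près de $0$, il existe un difféomorphisme local lisse $\phi$ avec $\phi(0)=0$ et $\phi'(0)=1$ tel que $V(x)=\frac{V''(0)}{2}\phi(x)^{2}$; on en déduit $x_{-}(h)=\sqrt{h}+O(h)$, tandis que $x_{+}(h)\to x_{+}(0)>0$ lorsque $h\to 0$.

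Ensuite, je découperais l'intégrale en deux à une constante $\delta>0$ fixée assez petite : $\tau_{h}=2(I_{1}(h)+I_{2}(h))$, avec $I_{2}(h)$ bornée indépendamment de $h$ (l'intégrande converge uniformément sur $[\delta,x_{+}(0)+1]$ vers une fonction continue bornée près du puits régulier), et $I_{1}(h)=\int_{x_{-}(h)}^{\delta}dx/\sqrt{2(E_{h}-V(x))}$ contenant toute la singularité logarithmique. Pour évaluer $I_{1}(h)$, j'effectuerais le changement de variable $u=\phi(x)$ : en posant $a:=|V''(0)|/2>0$ on obtient $2(E_{h}-V(x))=2a(\phi(x)^{2}-h\cdot(1+O(h)))$, et le jacobien est lisse et tend vers $1$. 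L'intégrale est alors équivalente à
\[
\frac{1}{\sqrt{2a}}\int_{\sqrt{h}}^{\delta}\frac{du}{\sqrt{u^{2}-h}}=\frac{1}{\sqrt{2a}}\mathrm{argch}\!\left(\frac{\delta}{\sqrt{h}}\right)=-\frac{\ln h}{2\sqrt{|V''(0)|}}+O(1).
\]

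En combinant : $\tau_{h}=-\dfrac{\ln(h)}{\sqrt{|V''(0)|}}+O(1)$, ce qui donne l'équivalent $\tau_{h}\sim\ln(h)/K$ avec $K=-\sqrt{|V''(0)|}\neq 0$ indépendant de $h$. L'obstacle principal est le contrôle rigoureux des termes de reste : il faut justifier que les corrections au modèle quadratique $V(x)\approx-ax^{2}$ ainsi que la variation de $\phi'$ ne contribuent qu'à un $O(1)$ dans $I_{1}$, ce qui nécessite d'estimer proprement l'intégrale de la différence entre l'intégrande exact et son modèle asymptotique uniformément sur $[x_{-}(h),\delta]$. L'utilisation du lemme de Morse dès le départ transforme le calcul en celui de la période du modèle hyperbolique pur, ce qui évite toute ambiguïté sur les constantes.
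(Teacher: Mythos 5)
Your proposal is correct and follows a genuinely different route from the paper's. The paper normalizes the Hamiltonian symplectically: after a linear change of coordinates it applies Moser's normal form theorem to write $P(X,\Xi)=F(X\Xi)$ near the hyperbolic point, integrates Hamilton's equations explicitly to get a pure hyperbolic flow $X(t)=X(0)e^{C_h t}$, $\Xi(t)=\Xi(0)e^{-C_h t}$, reads off the exit time from a fixed box as $\sim\ln h/(2C_h)$, and handles the portion of the orbit away from the singularity with a Poincaré return-map argument to show it contributes only $O(1)$. You instead stay in position space: you invoke the classical one-dimensional period formula $\tau_h=2\int_{x_-}^{x_+}dx/\sqrt{2(E_h-V(x))}$, apply the (scalar) Morse lemma to $V$ near $0$, split the integral at a fixed $\delta$, and compute the near-singular piece explicitly as an $\operatorname{argch}$, obtaining the logarithm directly. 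Your approach is more elementary -- the one-variable Morse lemma is far lighter machinery than Moser's symplectic normal form, and the boundedness of the outer integral $I_2$ replaces the Poincaré-map argument with a simple continuity/compactness observation. What the paper's route buys is a picture that is intrinsically symplectic and generalizes to settings (e.g.\ higher dimensions, more general pseudo-differential symbols) where no scalar period formula is available, and it mirrors the quantum normal form used elsewhere in the paper; your route exploits the one-dimensional structure to its fullest and, as you note, isolates the remaining work as the control of the difference between the exact integrand and the quadratic model on $[x_-(h),\delta]$, which is indeed the only nontrivial estimate left. As a side remark, your identification of the constant $K=-\sqrt{|V''(0)|}$ is correct (one can check it against the exact model $V(x)=-x^2/2$, where the half-period to $x=A$ is $\operatorname{argch}(A/\sqrt h)\sim|\ln h|/2$); the paper's final line appears to omit a factor $2$ when passing from the exit time $\tau_1(h)$ to the full period, but this does not affect the statement since $K$ is left unspecified. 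One small simplification you could make: $x_-(h)=\sqrt h$ exactly, since $m_h=(\sqrt h,0)$ is by construction the inner turning point of its own orbit, so there is no $O(h)$ correction needed there.
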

\begin{proof}
Sans perdre de généralités comme $V^{\prime\prime}(0)<0$ on peut
supposer que $-V^{\prime\prime}(0)=1$. Ensuite notons par $\Lambda_{h}=p^{-1}\left\{ p\left(m_{h}\right)\right\} $
l'unique fibre régulière contenant le point $m_{h}$ alors le flot
hamiltonien $\varphi_{t}\left(m_{h}\right)$ associé à $p$ et de
point initial $m_{h}$ est périodique et supporté sur la fibre $\Lambda_{h}$.
Pour estimer la période on va faire deux étapes : d'abord en se plaçant
autour de la singularité (en $0$) on peut utiliser une forme normale
classique pour estimer le temps de visite du flot dans un voisinage
de la singularité. Ensuite la seconde étape consiste à estimer le
temps de visite du flot en dehors de ce voisinage.

$\vspace{0.25cm}$

\textbf{Première étape. }Avant d'utiliser une forme normale on va
d'abord faire un changement de repère préliminaire : en faisant un
développement limité de la fonction $V$ autour de $0$ :\[
p(x,\xi)=\frac{\xi^{2}}{2}+V(x)=\frac{\xi^{2}}{2}+V(0)+V^{\prime}(0)+\frac{V^{\prime\prime}(0)}{2}x^{2}+O(x^{3})\]
\[
=\frac{\xi^{2}}{2}-\frac{x^{2}}{2}+o(x^{3})\]
donc sur un voisinage de $(0,0)$ nous avons que \[
p(x,\xi)=\left(\frac{\xi}{\sqrt{2}}-\frac{x}{\sqrt{2}}\right)\left(\frac{\xi}{\sqrt{2}}+\frac{x}{\sqrt{2}}\right)+o(x^{3}).\]
L'application :\[
\varphi\,:\,\left\{ \begin{array}{cc}
\mathbb{R}^{2}\rightarrow\mathbb{R}^{2}\\
\\(x,y)\mapsto\left(\frac{\xi}{\sqrt{2}}-\frac{x}{\sqrt{2}},\frac{\xi}{\sqrt{2}}+\frac{x}{\sqrt{2}}\right)\end{array}\right.\]
est un $\mathcal{C}^{1}$- difféomorphisme linéaire et son inverse
$\varphi^{-1}$ est égale à $\varphi.$ Ainsi dans les nouvelles variables
$\left(X,\Xi\right):=\varphi(x,\xi)$ on a\[
P(X,\Xi)=X\Xi+o\left(X^{3}\Xi^{3}\right)\]
et le point initial $m_{h}$ a pour nouvelles coordonnées $m_{h}=\left(\sqrt{\frac{h}{2}},\sqrt{\frac{h}{2}}\right)$.
Alors le théorème 2 de forme normale de Moser (voir\textbf{ {[}25]})
assure l'existence d'un ouvert $U$ de $\mathbb{R}^{2}$ contenant
l'origine, d'un symplectomorphisme $\psi\,:\, U\rightarrow\mathbb{R}^{2}$
et d'une fonction $F\,:\,\mathbb{R}\rightarrow\mathbb{R}$ de classe
$\mathcal{C}^{\infty}$ telle que pour tout $(X,\Xi)\in U$ on ait
\[
P(X,\Xi)=F(X\Xi).\]
 Ainsi les équations de Hamilton du flot sont alors :\[
\left\{ \begin{array}{cc}
\dot{X}=F^{\prime}\left(X\Xi\right)X\\
\\\dot{\Xi}=-F^{\prime}\left(X\Xi\right)\Xi.\end{array}\right.\]
Notons bien que $t\mapsto X(t)\Xi(t)$ est constante, ainsi pour tout
$t\geq0$ on a l'égalité $X(t)\Xi(t)=X(0)\Xi(0)=\frac{h}{2}$. En
posant $C_{h}=F^{\prime}\left(\frac{h}{2}\right)$ nous avons donc
que pour tout $t\geq0$ :\[
\left\{ \begin{array}{cc}
X(t)=\sqrt{\frac{h}{2}}e^{C_{h}t}\\
\\\Xi(t)=\sqrt{\frac{h}{2}}e^{-C_{h}t}.\end{array}\right.\]
Or comme $U$ est un ouvert non vide contenant $0$, il existe une
constante $A>0$ telle que la boule $B_{\infty}(0,A)$ (pour la distance
infinie de $\mathbb{R}^{2}$) de centre $0$ et de rayon $A$ soit
incluse dans $U$. On va calculer le temps $\tau_{1}(h)$ pour que
le flot hamiltonien partant du point $m_{h}=\left(X(0),\Xi(0)\right)=\left(\sqrt{\frac{h}{2}},\sqrt{\frac{h}{2}}\right)$
sorte de la boule carré $B_{\infty}(0,A)$: il faut donc trouver $t$
tel que $\Xi(t)=A$. On a alors immédiatement que :\[
\tau_{1}(h)=\frac{1}{C_{h}}\ln\left(\sqrt{\frac{h}{2}}\right)-\frac{1}{C_{h}}\ln(A)\]
\[
=\frac{1}{2C_{h}}\ln\left(h\right)-\frac{1}{2C_{h}}\ln(2)-\frac{1}{C_{h}}\ln(A).\]
Ainsi sur une période complète du flot hamiltonien partant du point
$m_{h}$ le temps total de parcours du flot dans la boule $B_{\infty}(0,A)$
est $2\tau_{1}(h)$.

$\vspace{0.25cm}$

\textbf{Seconde étape} \textbf{: }Il reste donc a estimer le temps
de parcours du flot en dehors de la boule $B_{\infty}(0,A)$. En fait,
on va montrer que ce temps est négligeable par rapport à $\tau_{1}(h).$
Considérons alors le point $a=(0,A)$ et comme l'unique fibre $\Lambda_{A}=p^{-1}\left\{ p(a)\right\} $
qui contienne ce point $a$ ne contient pas de singularité en dehors
de la boule $B_{\infty}(0,A)$, le flot hamiltonien de point initial
$a$ va nécessairement revenir en temps fini dans la boule $B_{\infty}(0,A)$,
on peut alors considérer le réel :\[
t^{*}:=\inf\left\{ t>0/\varphi_{t}(a)\in\overline{B_{\infty}(0,A)}\right\} \]
 et poser $b:=\varphi_{t^{*}}(a)$. Notons aussi par $T_{a}$ l'hyperplan
transverse au flot $\left(\varphi_{t}(a)\right)_{t\geq0}$ au point
$a$, et par $T_{b}$ l'hyperplan transverse au flot $\left(\varphi_{t}(a)\right)_{t\geq0}$
au point $b$. 

Comme le flot hamiltonien est associé au champs de vecteur $\mathcal{C}^{\infty}$:\[
\chi_{P}=\left(\begin{array}{c}
\Xi\\
V^{\prime}(X)\end{array}\right)\]
qui ne s'annule pas en $a$ et en $b$, par un théorème classique
de calcul différentiel de type application de Poincaré (voir par exemple
\textbf{{[}24]}) il existe $\Omega_{a}$ un voisinage ouvert de $a$
dans le plan $T_{a}$, une fonction $\theta$ de $\Omega_{a}$ dans
$\mathbb{R}$ de classe $\mathcal{C}^{\infty}$ telle que $\theta(a)=0$
avec les propriétés suivantes :

1) pour tout $x\in\Omega_{a}$ on a $\varphi_{t^{*}+\theta(x)}(x)\in T_{b}$;

2) l'application $x\mapsto\varphi_{t^{*}+\theta(x)}(x)$ est un difféomorphisme
local de $\Omega_{a}$ dans $\Omega_{b}$ un voisinage ouvert de $b$
dans le plan $T_{b}.$ 

Autrement dit, partant d'un point voisin de $a$ sur l'hyperplan $T_{a}$
le flot rencontre l'autre hyperplan $T_{b}$ en un temps voisin de
$t^{*}$ qui est une fonction différentiable du point de départ.

Donc en particulier comme $\Omega_{a}$ est un voisinage ouvert de
$a$ dans $T_{a}\simeq\mathbb{R}$, par compacité locale il existe
$K_{a}$ un compact de $\mathbb{R}$ tel que $a\in K_{a}\subset\Omega_{a}$
avec $K_{a}\neq\left\{ a\right\} $ et donc évidemment pour tout $x\in K_{a}$
on a $\left|\theta(x)\right|\leq{\displaystyle \sup_{x\in K_{a}}\left|\theta(x)\right|}$.
Ainsi, comme:\[
\varphi_{\tau_{1}(h)}(m_{h})=\left(\frac{h}{2A},A\right)\]
 pour $h$ assez petit on a que $\varphi_{\tau_{1}(h)}(m_{h})\in K_{a}\times\left\{ A\right\} $
et donc :\[
\left|\theta\left(\varphi_{\tau_{1}(h)}(m_{h})\right)\right|\leq\sup_{x\in K_{a}}\left|\theta(x)\right|\]
 d'où au final la période $\tau_{h}$ est égale à $\tau(h)=\tau_{1}(h)+\theta\left(\varphi_{\tau_{1}(h)}(m_{h})\right)$.

Au final :\[
\tau(h)=\frac{1}{2C_{h}}\ln\left(h\right)-\frac{1}{2C_{h}}\ln(2)-\frac{1}{C_{h}}\ln(A)+\theta\left(\varphi_{\tau_{1}(h)}(m_{h})\right)\]
donc pour $h\rightarrow0$ on a l'équivalent suivant :\[
\tau(h)\sim\frac{\ln\left(h\right)}{2F^{\prime}(\frac{h}{2})}\]
et comme $F^{\prime}\left(\frac{h}{2}\right)=F^{\prime}(0)+F^{\prime\prime}(0)\frac{h}{2}+o(h^{2})$
avec $F^{\prime}(0)\neq0$, d'où pour $h\rightarrow0$ l'équivalent
$\tau(h)\sim\frac{\ln\left(h\right)}{2F^{\prime}(0)}$.
\end{proof}

\subsection{Démonstration du théorème 4.1}

\subsection*{Stratégie de la preuve}

La formule du théorème de Colin de Verdière-Parisse (théorème 3.2)
est une équation fonctionnelle implicite, on va inverser (au sens
bijectif) cette fonction de manière à pouvoir expliciter les valeurs
propres. Pour cela on va utiliser ce théorème avec $E=\lambda h^{\alpha}$
où $\lambda\in[-1,1]$ et $\alpha\geq0$ . Par la suite on va voir
que si l'on choisit $\alpha\in\left[\frac{1}{2},1\right[$ de sorte
qu'on ai l'inclusion évidente $\left[-h^{\alpha},h^{\alpha}\right]\subseteq\left[-\sqrt{h},\sqrt{h}\right]$,
on peut montrer assez facilement le théorème 4.1 avec des techniques
d'analyse réelle basiques. Afin de comprendre pourquoi on suppose
$\alpha\in\left[\frac{1}{2},1\right[,$ plutôt qu'écrire la preuve
directement avec $\alpha=\frac{1}{2}$ on écrira toute la preuve avec
$\alpha\in\left[\frac{1}{2},1\right[$ (voir aussi la partie 4.4.).

\subsection*{Prologue}

On va commencer par des notations : pour alléger l'écriture on définit
sur le compact $\left[-1,1\right]$ les fonctions :\textit{\[
F_{h}(E):=-\frac{\theta_{+}(E)+\theta_{-}(E)}{2}+\frac{\pi}{2}+\frac{\varepsilon(E)}{h}\ln(h)+\arg\left(\Gamma\left(\frac{1}{2}+i\frac{\varepsilon(E)}{h}\right)\right)\]
}et\textit{\[
f_{h}(\lambda):=F_{h}(\lambda h^{\alpha})=-\frac{\theta_{+}(\lambda h^{\alpha})+\theta_{-}(\lambda h^{\alpha})}{2}+\frac{\pi}{2}+\frac{\varepsilon(\lambda h^{\alpha})}{h}\ln(h)+\arg\left(\Gamma\left(\frac{1}{2}+i\frac{\varepsilon(\lambda h^{\alpha})}{h}\right)\right)\]
}puis \textit{\[
G_{h}(E):=\frac{\theta_{+}(E)+\theta_{-}(E)}{2}\]
}et\textit{\[
g_{h}(\lambda):=G_{h}(\lambda h^{\alpha})=\frac{\theta_{+}(\lambda h^{\alpha})+\theta_{-}(\lambda h^{\alpha})}{2}.\]
}Pour finir avec les notations, sur le compact $\left[-1,1\right]$,
on définit les deux fonctions $\mathcal{Y}_{h}$ et $\mathcal{Z}_{h}$
par\[
\mathcal{Y}_{h}(\lambda):=f_{h}(\lambda)-\arccos\left(\frac{\cos\left(g_{h}(\lambda)\right)}{\sqrt{1+\exp\left(2\pi\varepsilon(\lambda h^{\alpha})/h\right)}}\right)\]
et\[
\mathcal{Z}_{h}(\lambda):=f_{h}(\lambda)+\arccos\left(\frac{\cos\left(g_{h}(\lambda)\right)}{\sqrt{1+\exp\left(2\pi\varepsilon(\lambda h^{\alpha})/h\right)}}\right).\]
Le théorème 3.2 affirme alors exactement que :\[
h^{\alpha}\lambda\in\Sigma_{h}({\displaystyle P_{h}},\left[-h^{\alpha},h^{\alpha}\right])\Leftrightarrow\frac{\cos\left(g_{h}(\lambda)\right)}{\sqrt{1+e^{2\pi\frac{\varepsilon}{h}}}}=\cos\left(f_{h}(\lambda)\right)\]
\[
\Leftrightarrow\left|\begin{array}{ccc}
f_{h}(\lambda)\equiv\arccos\left(\frac{\cos\left(g_{h}(\lambda)\right)}{\sqrt{1+\exp\left(2\pi\varepsilon(\lambda h^{\alpha})/h\right)}}\right)\;\left[2\pi\right]\\
\textrm{ou}\\
f_{h}(\lambda)\equiv-\arccos\left(\frac{\cos\left(g_{h}(\lambda)\right)}{\sqrt{1+\exp\left(2\pi\varepsilon(\lambda h^{\alpha})/h\right)}}\right)\;\left[2\pi\right]\end{array}\right.\]
\[
\Leftrightarrow\left|\begin{array}{ccc}
\mathcal{Y}_{h}(\lambda)\in2\pi\mathbb{Z}\\
\textrm{ou}\\
\mathcal{Z}_{h}(\lambda)\in2\pi\mathbb{Z}.\end{array}\right.\]
L'idée pour expliciter le spectre est d'inverser les fonctions $\mathcal{Y}_{h}$
et $\mathcal{Z}_{h}$ pour avoir une formule explicite. On va d'abord
montrer que :

\begin{prop}
Pour $h$ assez petit, la fonction $\mathcal{Y}_{h}$ (resp. la fonction
$\mathcal{Z}_{h}$) réalise une bijection strictement décroissante
de $\left[-1,1\right]$ sur $\mathcal{Y}_{h}\left(\left[-1,1\right]\right)$
(resp. sur $\mathcal{Z}_{h}\left(\left[-1,1\right]\right)$). En outre,
on a uniformément sur $\left[-1,1\right]$ que \[
\frac{h^{\alpha-1}\ln(h)}{\sqrt{-V^{''}(0)}}+O(h^{\alpha-1})\leq\mathcal{Y}_{h}^{\prime}(\lambda)\leq\frac{\alpha h^{\alpha-1}\ln(h)}{\sqrt{-V^{''}(0)}}+O(h^{\alpha-1}).\]
De même pour la fonction $\mathcal{Z}_{h}$.
\end{prop}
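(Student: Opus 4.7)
The plan is to differentiate $\mathcal{Y}_h$ and $\mathcal{Z}_h$ term by term and show that the sandwich bound is driven by $f_h'(\lambda)$, while the $\arccos$-correction turns out to be only of order $O(h^{\alpha-1})$ and is absorbed by the remainder. Setting $u_h(\lambda) := \cos(g_h(\lambda))/\sqrt{w_h(\lambda)}$ with $w_h(\lambda) := 1 + \exp(2\pi\varepsilon(\lambda h^\alpha)/h)$, one has
\[
\mathcal{Y}_h'(\lambda) = f_h'(\lambda) + \frac{u_h'(\lambda)}{\sqrt{1 - u_h(\lambda)^2}}, \qquad \mathcal{Z}_h'(\lambda) = f_h'(\lambda) - \frac{u_h'(\lambda)}{\sqrt{1 - u_h(\lambda)^2}},
\]
so the two cases will follow from the same bounds.

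The first step is to compute $f_h'(\lambda)$. Using Theorem 3.4 one has $\varepsilon'(\lambda h^\alpha) = 1/\sqrt{-V''(0)} + O(h^\alpha)$ uniformly on $[-1,1]$, and smoothness of the singular actions $S^\pm(E)$ gives $g_h'(\lambda) = O(h^{\alpha-1})$ and a contribution to $f_h'$ from $\tfrac{1}{2}(\theta_+ + \theta_-)$ of size $O(h^{\alpha-1})$. Differentiation of $(\varepsilon/h)\ln h$ produces the dominant term $h^{\alpha-1}\ln(h)/\sqrt{-V''(0)} + O(h^{\alpha-1})$. For the $\arg\Gamma$-term I would invoke the classical identity
\[
\frac{d}{dy}\arg\Gamma(\tfrac{1}{2}+iy) = \mathrm{Re}\,\psi(\tfrac{1}{2}+iy),
\]
with $\psi$ the digamma function, together with the uniform estimate $\mathrm{Re}\,\psi(\tfrac{1}{2}+iy) = \tfrac{1}{2}\ln(\tfrac{1}{4}+y^2) + O(1)$ and the bound $|\varepsilon(\lambda h^\alpha)/h| \leq h^{\alpha-1}/\sqrt{-V''(0)} + O(1)$. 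Together this produces a contribution $h^{\alpha-1}\varepsilon'(\lambda h^\alpha)\mathrm{Re}\,\psi(\tfrac{1}{2}+i\varepsilon/h)$ which varies continuously with $\lambda$ between $O(h^{\alpha-1})$ (attained near $\lambda = 0$, where $\varepsilon/h$ stays bounded) and $(\alpha-1)h^{\alpha-1}\ln(h)/\sqrt{-V''(0)} + O(h^{\alpha-1})$ (for $|\lambda|$ bounded away from $0$). Summing these three pieces yields
\[
\frac{h^{\alpha-1}\ln h}{\sqrt{-V''(0)}} + O(h^{\alpha-1}) \leq f_h'(\lambda) \leq \frac{\alpha h^{\alpha-1}\ln h}{\sqrt{-V''(0)}} + O(h^{\alpha-1}).
\]

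The second step is to bound $|u_h'(\lambda)|/\sqrt{1 - u_h(\lambda)^2}$ uniformly. The crucial algebraic identity is
\[
1 - u_h^2 = \frac{\sin^2(g_h) + \exp(2\pi\varepsilon/h)}{w_h},
\]
and expanding $u_h' = -\sin(g_h)g_h'/\sqrt{w_h} - \cos(g_h)w_h'/(2w_h^{3/2})$ with $w_h' = 2\pi h^{\alpha-1}\varepsilon'(\lambda h^\alpha)\exp(2\pi\varepsilon/h)$ and using $(a+b)^2 \leq 2a^2 + 2b^2$ gives
\[
\frac{(u_h')^2}{1 - u_h^2} \leq 2(g_h')^2 + 2\pi^2 h^{2(\alpha-1)}(\varepsilon')^2\cdot \frac{w_h-1}{w_h^2} \leq C\,h^{2(\alpha-1)},
\]
since $(w_h - 1)/w_h^2 \leq 1/4$ on $[1,\infty)$.

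Combining the two steps yields the stated derivative estimate for $\mathcal{Y}_h$ and, with a sign change, for $\mathcal{Z}_h$. Since $h^{\alpha-1}|\ln h|$ dominates $h^{\alpha-1}$ as $h \to 0$, the upper bound is strictly negative for $h$ small, so both functions are strictly decreasing and therefore bijective onto their images. The main obstacle is the uniform control of the arccos term in the regime $\varepsilon/h \to -\infty$: there $w_h \to 1$ and $u_h$ approaches $\pm 1$, so $1/\sqrt{1-u_h^2}$ blows up; the identity $1 - u_h^2 = (\sin^2 g_h + e^{2\pi\varepsilon/h})/w_h$ is precisely what exhibits the cancellation showing that $u_h'$ vanishes at the right rate to keep the quotient bounded.
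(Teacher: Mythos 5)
Your proof is correct, and the overall skeleton is the same as the paper's: split $\mathcal{Y}_h' = f_h' \pm u_h'/\sqrt{1-u_h^2}$, estimate $f_h'$ by grouping its four constituent pieces, and show the $\arccos$-correction is $O(h^{\alpha-1})$, hence subsumed by the remainder. The $f_h'$ part (the $\theta_\pm'$ term, the $\varepsilon'\ln h$ term, and the $\arg\Gamma$ term via $\mathrm{Re}\,\psi(\tfrac12+i\varepsilon/h)$) matches the paper's computation essentially verbatim.

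Where you genuinely diverge is the treatment of the $\arccos$ derivative. The paper expands the numerator into two pieces, bounds the first by $\sqrt{1+e^{2\pi\varepsilon/h}-\cos^2 g_h}\geq|\sin g_h|$, and for the second must split into $\lambda\in[-1,0]$ and $\lambda\in[0,1]$, in each case invoking $\alpha\geq\tfrac12$ so that $e^{\pi\varepsilon/h}$ resp. $e^{-\pi\varepsilon/h}$ stays $O(1)$ (these are exactly the bounds (4.5)--(4.6) that the paper's ``remarque technique'' singles out as the source of the $\alpha\geq\tfrac12$ hypothesis). Your route is cleaner: the identity $1-u_h^2 = (\sin^2 g_h + e^{2\pi\varepsilon/h})/w_h$, combined with $(a+b)^2\leq 2a^2+2b^2$ and the elementary inequality $(w-1)/w^2\leq\tfrac14$ on $[1,\infty)$, gives the single uniform bound
\[
\frac{(u_h')^2}{1-u_h^2}\leq 2(g_h')^2 + 2\pi^2 h^{2(\alpha-1)}(\varepsilon')^2\,\frac{w_h-1}{w_h^2}\leq C\,h^{2(\alpha-1)}
\]
with no case distinction on the sign of $\lambda$ and, notably, with no appeal to $\alpha\geq\tfrac12$. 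This is a genuine simplification: it shows that $\alpha\geq\tfrac12$ is not actually required by the $\arccos$ piece, only by the $\arg\Gamma$ piece (through the uniform bound $|\varepsilon(\lambda h^\alpha)/h|\lesssim h^{\alpha-1}+O(1)$, whose $O(1)$ depends on $O(h^{2\alpha-1})=O(1)$), and by the need to absorb $O(h^{2\alpha-1}\ln h)$ into $O(h^{\alpha-1})$. It would be worth making explicit in your write-up that the absorption of the $O(h^{2\alpha-1}\ln h)$ error and the boundedness of $\varepsilon/h$ near $\lambda=0$ are the two remaining places where $\alpha\geq\tfrac12$ enters, since you implicitly use both.
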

\begin{proof}
Avec la définition de la fonction $\mathcal{Y}_{h}$, pour tout $\lambda\in[-1,1]$
on a :\[
\mathcal{Y}_{h}^{\prime}(\lambda)=f_{h}^{\prime}(\lambda)-\frac{\partial}{\partial\lambda}\left[\arccos\left(\frac{\cos\left(g_{h}(\lambda)\right)}{\sqrt{1+\exp\left(2\pi\varepsilon(\lambda h^{\alpha})/h\right)}}\right)\right]\]
\textit{\[
=-h^{\alpha}\frac{\theta_{+}^{\prime}(\lambda h^{\alpha})+\theta_{-}^{\prime}(\lambda h^{\alpha})}{2}+h^{\alpha-1}\varepsilon^{\prime}(\lambda h^{\alpha})\ln(h)\]
}\[
+\frac{\partial}{\partial\lambda}\left[\arg\left(\Gamma\left(\frac{1}{2}+i\frac{\varepsilon(\lambda h^{\alpha})}{h}\right)\right)\right]-\frac{\partial}{\partial\lambda}\left[\arccos\left(\frac{\cos\left(g_{h}(\lambda)\right)}{\sqrt{1+\exp\left(2\pi\varepsilon(\lambda h^{\alpha})/h\right)}}\right)\right].\]
On va estimer, un par un, les quatre éléments de cette somme. 

Comme la fonction $E\mapsto-\left(\Theta_{+}^{\prime}(E)+\Theta_{-}^{\prime}(E)\right)/2$
admet un développement asymptotique de $-1$ à $+\infty$, avec des
coefficients $\mathcal{C}^{\infty}$ par rapport à $E$, on a donc
que la fonction $\lambda\mapsto-h^{\alpha}\frac{\theta_{+}^{\prime}(\lambda h^{\alpha})+\theta_{-}^{\prime}(\lambda h^{\alpha})}{2}$
admet un développement asymptotique de $\alpha-1$ à $+\infty$, avec
des coefficients $\mathcal{C}^{\infty}$ par rapport à $\lambda$,
ainsi nous avons que\[
-h^{\alpha}\frac{\theta_{+}^{\prime}(\lambda h^{\alpha})+\theta_{-}^{\prime}(\lambda h^{\alpha})}{2}=O(h^{\alpha-1}).\]
Ensuite on va estimer le terme $\lambda\mapsto h^{\alpha-1}\varepsilon^{\prime}(\lambda h^{\alpha})\ln(h)$:
en utilisant le développement asymptotique de la fonction $\varepsilon$
et en le dérivant on a : \[
\varepsilon^{\prime}(\lambda h^{\alpha})=\sum_{j=0}^{\infty}\varepsilon_{j}^{\prime}(\lambda h^{\alpha})h^{j}=\varepsilon_{0}^{\prime}(\lambda h^{\alpha})+\sum_{j=1}^{\infty}\varepsilon_{j}^{\prime}(\lambda h^{\alpha})h^{j}\]
\[
=\varepsilon_{0}^{\prime}(0)+O(h^{\alpha})+\sum_{j=1}^{\infty}\varepsilon_{j}^{\prime}(\lambda h^{\alpha})h^{j}\]
\[
=\frac{1}{\sqrt{-V^{''}(0)}}+O(h^{\alpha})+\sum_{j=1}^{\infty}\varepsilon_{j}^{\prime}(\lambda h^{\alpha})h^{j}.\]
Par conséquent nous obtenons :\[
h^{\alpha-1}\varepsilon^{\prime}(\lambda h^{\alpha})\ln(h)=\frac{h^{\alpha-1}\ln(h)}{\sqrt{-V^{''}(0)}}+O(h^{2\alpha-1}\ln(h))+\sum_{j=1}^{\infty}\varepsilon_{j}^{\prime}(\lambda h^{\alpha})h^{j+\alpha-1}\ln(h).\]
 Estimons maintenant le terme $\lambda\mapsto\frac{\partial}{\partial\lambda}\left[\arccos\left(\frac{\cos\left(g_{h}(\lambda)\right)}{\sqrt{1+\exp\left(2\pi\varepsilon(\lambda h^{\alpha})/h\right)}}\right)\right]$:
par un simple calcul de dérivé on a pour tout $\lambda\in[-1,1]$
l'égalité :\[
\frac{\partial}{\partial\lambda}\left[\arccos\left(\frac{\cos\left(g_{h}(\lambda)\right)}{\sqrt{1+\exp\left(2\pi\varepsilon(\lambda h^{\alpha})/h\right)}}\right)\right]\]
\[
=\frac{\sin(g_{h}(\lambda))g_{h}^{\prime}(\lambda)\left[1+\exp\left(2\pi\varepsilon(\lambda h^{\alpha})/h\right)\right]+\pi h^{\alpha-1}\varepsilon^{\prime}(\lambda h^{\alpha})\cos(g_{h}(\lambda))\exp\left(2\pi\varepsilon(\lambda h^{\alpha})/h\right)}{\left(1+\exp\left(2\pi\varepsilon(\lambda h^{\alpha})/h\right)\right)\sqrt{1+\exp\left(2\pi\varepsilon(\lambda h^{\alpha})/h\right)-\cos^{2}\left(g_{h}(\lambda)\right)}}\]
\[
=\frac{\sin(g_{h}(\lambda))g_{h}^{\prime}(\lambda)}{\sqrt{1+\exp\left(2\pi\varepsilon(\lambda h^{\alpha})/h\right)-\cos^{2}\left(g_{h}(\lambda)\right)}}\]
\[
+\frac{\pi h^{\alpha-1}\varepsilon^{\prime}(\lambda h^{\alpha})\cos(g_{h}(\lambda))\exp\left(2\pi\varepsilon(\lambda h^{\alpha})/h\right)}{\left(1+\exp\left(2\pi\varepsilon(\lambda h^{\alpha})/h\right)\right)\sqrt{1+\exp\left(2\pi\varepsilon(\lambda h^{\alpha})/h\right)-\cos^{2}\left(g_{h}(\lambda)\right)}}.\]
Or pour tout $\lambda\in[-1,1]$, comme : $1+\exp\left(2\pi\varepsilon(\lambda h^{\alpha})/h\right)\geq1$
on a donc que : \[
\sqrt{1+\exp\left(2\pi\varepsilon(\lambda h^{\alpha})/h\right)-\cos^{2}\left(g_{h}(\lambda)\right)}\geq\sqrt{1-\cos^{2}\left(g_{h}(\lambda)\right)}=\left|\sin\left(g_{h}(\lambda)\right)\right|\]
d'où pour tout $\lambda\in[-1,1]$:\[
\left|\frac{\sin(g_{h}(\lambda))g_{h}^{\prime}(\lambda)}{\sqrt{1+\exp\left(2\pi\varepsilon(\lambda h^{\alpha})/h\right)-\cos^{2}\left(g_{h}(\lambda)\right)}}\right|\leq\left|g_{h}^{\prime}(\lambda)\right|\]
\[
=\left|\frac{\partial}{\partial\lambda}\left[\frac{\theta_{+}(\lambda h^{\alpha})+\theta_{-}(\lambda h^{\alpha})}{2}\right]\right|=h^{\alpha}\left|\frac{\theta_{+}^{\prime}(\lambda h^{\alpha})+\theta_{-}^{\prime}(\lambda h^{\alpha})}{2}\right|\]
\textit{\begin{eqnarray*}
=\frac{h^{\alpha}}{2}\left|\frac{S_{0,+}^{\prime}(\lambda h^{\alpha})+S_{0,-}^{\prime}(\lambda h^{\alpha})}{h}\right.\end{eqnarray*}
\[
+\left.\left(S_{1,+}^{\prime}(\lambda h^{\alpha})+S_{1,-}^{\prime}(\lambda h^{\alpha})\right)+\sum_{j=2}^{\infty}\left(S_{j,+}^{\prime}(\lambda h^{\alpha})+S_{j,-}^{\prime}(\lambda h^{\alpha})\right)h^{j-1}\right|\]
}\[
=O(h^{\alpha-1}).\]
Ensuite comme pour tout $\lambda\in[-1,1]$ : \[
\sqrt{1+\exp\left(2\pi\varepsilon(\lambda h^{\alpha})/h\right)-\cos^{2}\left(g_{h}(\lambda)\right)}\geq\exp\left(\pi\varepsilon(\lambda h^{\alpha})/h\right)\]
nous avons que pour tout $\lambda\in[-1,1]$ :\[
\left|\frac{\pi h^{\alpha-1}\varepsilon^{\prime}(\lambda h^{\alpha})\cos(g_{h}(\lambda))\exp\left(2\pi\varepsilon(\lambda h^{\alpha})/h\right)}{\left(1+\exp\left(2\pi\varepsilon(\lambda h^{\alpha})/h\right)\right)\sqrt{1+\exp\left(2\pi\varepsilon(\lambda h^{\alpha})/h\right)-\cos^{2}\left(g_{h}(\lambda)\right)}}\right|\]
\[
\leq\left|\pi h^{\alpha-1}\varepsilon^{\prime}(\lambda h^{\alpha})\right|\exp\left(\pi\varepsilon(\lambda h^{\alpha})/h\right)\]
avec\[
\left|\pi h^{\alpha-1}\varepsilon^{\prime}(\lambda h^{\alpha})\right|=\left|\frac{\pi h^{\alpha-1}}{\sqrt{-V^{''}(0)}}+O(h^{2\alpha-1})+\sum_{j=1}^{\infty}\pi\varepsilon_{j}^{\prime}(\lambda h^{\alpha})h^{j+\alpha-1}\right|\]
\[
=O(h^{\alpha-1})\]
et\begin{equation}
\exp\left(\pi\varepsilon(\lambda h^{\alpha})/h\right)=\exp\left(\frac{\pi\lambda h^{\alpha-1}}{\sqrt{-V^{''}(0)}}+O(h^{2\alpha-1})+\sum_{j=1}^{\infty}\pi\varepsilon_{j}(\lambda h^{\alpha})h^{j-1}\right)\label{eq:}\end{equation}
comme $\alpha\geq\frac{1}{2}$, on en déduit alors que pour tout $\lambda\in[-1,0]$,
$\exp\left(\pi\varepsilon(\lambda h^{\alpha})/h\right)=O(1)$, ainsi
pour pour tout $\lambda\in[-1,0]$ on a :\[
\left|\frac{\pi h^{\alpha-1}\varepsilon^{\prime}(\lambda h^{\alpha})\cos(g_{h}(\lambda))\exp\left(2\pi\varepsilon(\lambda h^{\alpha})/h\right)}{\left(1+\exp\left(2\pi\varepsilon(\lambda h^{\alpha})/h\right)\right)\sqrt{1+\exp\left(2\pi\varepsilon(\lambda h^{\alpha})/h\right)-\cos^{2}\left(g_{h}(\lambda)\right)}}\right|=O(h^{\alpha-1}).\]
D'autre part, pour tout $\lambda\in[-1,1]$ on a aussi \[
\left|\frac{\pi h^{\alpha-1}\varepsilon^{\prime}(\lambda h^{\alpha})\cos(g_{h}(\lambda))\exp\left(2\pi\varepsilon(\lambda h^{\alpha})/h\right)}{\underbrace{\left(1+\exp\left(2\pi\varepsilon(\lambda h^{\alpha})/h\right)\right)}_{\geq\exp\left(2\pi\varepsilon(\lambda h^{\alpha})/h\right)}\underbrace{\sqrt{1+\exp\left(2\pi\varepsilon(\lambda h^{\alpha})/h\right)-\cos^{2}\left(g_{h}(\lambda)\right)}}_{\geq\exp\left(\pi\varepsilon(\lambda h^{\alpha})/h\right)}}\right|\]
\[
\leq\frac{\pi h^{\alpha-1}\varepsilon^{\prime}(\lambda h^{\alpha})\exp\left(2\pi\varepsilon(\lambda h^{\alpha})/h\right)}{\exp\left(3\pi\varepsilon(\lambda h^{\alpha})/h\right)}\]
\[
=\underbrace{\pi h^{\alpha-1}\varepsilon^{\prime}(\lambda h^{\alpha})}_{=O(h^{\alpha-1})}\exp\left(-\pi\varepsilon(\lambda h^{\alpha})/h\right)\]
avec\begin{equation}
\exp\left(-\pi\varepsilon(\lambda h^{\alpha})/h\right)=\exp\left(-\frac{\pi\lambda h^{\alpha-1}}{\sqrt{-V^{''}(0)}}+O(h^{2\alpha-1})-\sum_{j=1}^{\infty}\pi\varepsilon_{j}(\lambda h^{\alpha})h^{j-1}\right)\label{eq:}\end{equation}
donc toujours comme $\alpha\geq\frac{1}{2}$, on en déduit que pour
tout $\lambda\in[0,1]$ on a $\exp\left(-\pi\varepsilon(\lambda h^{\alpha})/h\right)=O(1)$,
ainsi pour tout $\lambda\in[0,1]$ on obtient \[
\left|\frac{\pi h^{\alpha-1}\varepsilon^{\prime}(\lambda h^{\alpha})\cos(g_{h}(\lambda))\exp\left(2\pi\varepsilon(\lambda h^{\alpha})/h\right)}{\left(1+\exp\left(2\pi\varepsilon(\lambda h^{\alpha})/h\right)\right)\sqrt{1+\exp\left(2\pi\varepsilon(\lambda h^{\alpha})/h\right)-\cos^{2}\left(g_{h}(\lambda)\right)}}\right|=O(h^{\alpha-1}).\]
On vient donc de montrer que pour tout $\lambda\in[-1,1]$ \[
\frac{\partial}{\partial\lambda}\left[\arccos\left(\frac{\cos\left(g_{h}(\lambda)\right)}{\sqrt{1+\exp\left(2\pi\varepsilon(\lambda h^{\alpha})/h\right)}}\right)\right]=O(h^{\alpha-1}).\]
Ensuite, pour finir, on va calculer et estimer $\lambda\mapsto\frac{\partial}{\partial\lambda}\left[\arg\left(\Gamma\left(\frac{1}{2}+i\frac{\varepsilon(\lambda h^{\alpha})}{h}\right)\right)\right]$:
pour tout $\lambda\in[-1,1]$ on a :\[
\frac{\partial}{\partial\lambda}\left[\arg\left(\Gamma\left(\frac{1}{2}+i\frac{\varepsilon(\lambda h^{\alpha})}{h}\right)\right)\right]=\frac{\partial}{\partial\lambda}\left[\textrm{Im}\left(\ln\left(\Gamma\left(\frac{1}{2}+i\frac{\varepsilon(\lambda h^{\alpha})}{h}\right)\right)\right)\right]\]
\[
=\textrm{Im}\left[\frac{\partial}{\partial\lambda}\left(\ln\left(\Gamma\left(\frac{1}{2}+i\frac{\varepsilon(\lambda h^{\alpha})}{h}\right)\right)\right)\right]\]
\[
=\textrm{Im}\left[\frac{\Gamma^{\prime}\left(\frac{1}{2}+i\frac{\varepsilon(\lambda h^{\alpha})}{h}\right)ih^{\alpha-1}\varepsilon^{\prime}(\lambda h^{\alpha})}{\Gamma\left(\frac{1}{2}+i\frac{\varepsilon(\lambda h^{\alpha})}{h}\right)}\right]\]
\[
=h^{\alpha-1}\varepsilon^{\prime}(\lambda h^{\alpha})\textrm{Re}\left[\frac{\Gamma^{\prime}\left(\frac{1}{2}+i\frac{\varepsilon(\lambda h^{\alpha})}{h}\right)}{\Gamma\left(\frac{1}{2}+i\frac{\varepsilon(\lambda h^{\alpha})}{h}\right)}\right]\]
\[
=h^{\alpha-1}\varepsilon^{\prime}(\lambda h^{\alpha})\textrm{Re}\left(\Psi\left(\frac{1}{2}+i\frac{\varepsilon(\lambda h^{\alpha})}{h}\right)\right)\]
où $\Psi$ est la fonction di-Gamma définie sur $\mathbb{C}-\mathbb{Z}^{-}$
par $\Psi(z):=\frac{\Gamma^{\prime}(z)}{\Gamma(z)}$ (voir \textbf{{[}1]}).
Rappelons que pour tout $\lambda\in[-1,1]$,

\[
\frac{\varepsilon(\lambda h^{\alpha})}{h}=\frac{\lambda h^{\alpha-1}}{\sqrt{-V^{''}(0)}}+O(h^{2\alpha-1})+\sum_{j=1}^{\infty}\varepsilon_{j}(\lambda h^{\alpha})h^{j-1}.\]
Donc comme $x\mapsto\textrm{Re}\left(\Psi\left(\frac{1}{2}+ix\right)\right)$
est paire et strictement croissante sur $\mathbb{R}_{+},$ on en déduit
l'encadrement pour tout $\lambda\in[-1,1]$ :\[
\textrm{Re}\left(\Psi\left(\frac{1}{2}+iO(h^{2\alpha-1})\right)\right)\leq\textrm{Re}\left(\Psi\left(\frac{1}{2}+i\frac{\varepsilon(\lambda h^{\alpha})}{h}\right)\right)\leq\textrm{Re}\left(\Psi\left(\frac{1}{2}+\frac{ih^{\alpha-1}}{\sqrt{-V^{''}(0)}}+iO\left(h^{2\alpha-1}\right)\right)\right).\]
Alors d'une part, comme\textbf{ }$\alpha\geq\frac{1}{2}$ nous avons
que pour tout $\lambda\in[-1,1]$:\[
\textrm{Re}\left(\Psi\left(\frac{1}{2}+iO(h^{2\alpha-1})\right)\right)=O(1).\]
D'autre part, comme (voir \textbf{{[}1]}) pour $|y|\rightarrow+\infty$
\[
\textrm{Re}\left(\Psi\left(\frac{1}{2}+iy\right)\right)=\ln\left|y\right|+O\left(\frac{1}{y^{2}}\right)\]
on en déduit (car $\alpha<1)$ que:\[
\textrm{Re}\left(\Psi\left(\frac{1}{2}+\frac{ih^{\alpha-1}}{\sqrt{-V^{''}(0)}}+iO(h^{2\alpha-1})\right)\right)\]
\[
=\ln\left|\frac{h^{\alpha-1}}{\sqrt{-V^{''}(0)}}+O(h^{2\alpha-1})\right|+O\left(\frac{1}{\left(\frac{h^{\alpha-1}}{\sqrt{-V^{''}(0)}}+O(h^{2\alpha-1})\right)^{2}}\right).\]
Or comme \[
\frac{1}{\left(\frac{h^{\alpha-1}}{\sqrt{-V^{''}(0)}}+O(h^{2\alpha-1})\right)^{2}}=\frac{1}{\frac{h^{2\alpha-2}}{-V^{''}(0)}+O(h^{3\alpha-2})}\]

\[
=\frac{-V^{''}(0)h^{2-2\alpha}}{1+O(h^{\alpha})}=-V^{''}(0)h^{2-2\alpha}+O(h^{2-\alpha})=O(h^{2-2\alpha})\]
et que\[
\ln\left|\frac{h^{\alpha-1}}{\sqrt{-V^{''}(0)}}+O(h^{2\alpha-1})\right|=\ln\left|\frac{h^{\alpha-1}}{\sqrt{-V^{''}(0)}}\left(1+O(h^{\alpha})\right)\right|\]
\[
=\ln\left|\frac{h^{\alpha-1}}{\sqrt{-V^{''}(0)}}\right|+\ln\left|1+O(h^{\alpha})\right|\]
\[
=(\alpha-1)\ln\left|h\right|-\ln\left|\sqrt{-V^{''}(0)}\right|+O(h^{\alpha}),\]
on en déduit que 

\selectlanguage{english}%
\[
\textrm{Re}\left(\Psi\left(\frac{1}{2}+\frac{ih^{\alpha-1}}{\sqrt{-V^{''}(0)}}+iO(h^{2\alpha-1})\right)\right)=(\alpha-1)\ln\left|h\right|-\ln\left|\sqrt{-V^{''}(0)}\right|+O(h^{\alpha})+O(h^{2-2\alpha}).\]
\foreignlanguage{french}{Par conséquent, pour tout $\lambda\in[-1,1]$
nous avons l'encadrement :\[
m_{\alpha}(h)\leq\frac{\partial}{\partial\lambda}\left[\arg\left(\Gamma\left(\frac{1}{2}+i\frac{\varepsilon(\lambda h^{\alpha})}{h}\right)\right)\right]\leq M_{\alpha}(h).\]
Où on a posé:\[
m_{\alpha}(h):=h^{\alpha-1}\varepsilon^{\prime}(\lambda h^{\alpha})\textrm{Re}\left(\Psi\left(\frac{1}{2}+iO(h^{2\alpha-1})\right)\right)=O(h^{\alpha-1})\]
et\[
M_{\alpha}(h):=h^{\alpha-1}\varepsilon^{\prime}(\lambda h^{\alpha})\textrm{Re}\left(\Psi\left(\frac{1}{2}+\frac{ih^{\alpha-1}}{\sqrt{-V^{''}(0)}}+iO(h^{2\alpha-1})\right)\right)\]
\[
=\left[\frac{h^{\alpha-1}}{\sqrt{-V^{''}(0)}}+O(h^{2\alpha-1})\right]\left[(\alpha-1)\ln\left|h\right|-\ln\left|\sqrt{-V^{''}(0)}\right|+O(h^{\alpha})+O(h^{2-2\alpha})\right]\]
\[
=\frac{(\alpha-1)h^{\alpha-1}}{\sqrt{-V^{''}(0)}}\ln(h)+O\left(h^{\alpha-1}\right).\]
Ainsi au final, on en déduit que pour tout $\lambda\in[-1,1]$ :\[
\frac{h^{\alpha-1}\ln(h)}{\sqrt{-V^{''}(0)}}+O(h^{\alpha-1})\leq\mathcal{Y}_{h}^{\prime}(\lambda)\leq\frac{\alpha h^{\alpha-1}\ln(h)}{\sqrt{-V^{''}(0)}}+O(h^{\alpha-1}).\]
Ensuite pour $h$ assez petit on conclut que pour tout $\lambda\in[-1,1]$,
$\mathcal{Y}_{h}^{\prime}(\lambda)<0$ et donc la fonction $\mathcal{Y}_{h}$
est bien strictement décroissante sur le compact $[-1,1]$. De même
pour la fonction $\mathcal{Z}_{h}$. }
\end{proof}
\selectlanguage{french}%

\subsection*{Deux familles de valeurs propres}

Comme les fonctions $\mathcal{Y}_{h}$ et $\mathcal{Z}_{h}$ sont
toutes deux bijectives, considérons leurs bijections réciproques,
que l'on renote par:\[
\mathcal{A}_{h}:=\mathcal{Y}_{h}^{-1}\,:\,\mathcal{Y}_{h}\left([-1,1]\right)\rightarrow[-1,1]\;\;\textrm{et}\;\;\mathcal{B}_{h}:=\mathcal{Z}_{h}^{-1}\,:\,\mathcal{Z}_{h}\left([-1,1]\right)\rightarrow[-1,1].\]
Ainsi la condition nécessaire et suffisante des valeurs propres semi-classique
: \[
h^{\alpha}\lambda\in\Sigma_{h}(P_{h},\left[-h^{\alpha},h^{\alpha}\right])\]
\[
\Leftrightarrow\left|\begin{array}{ccc}
\mathcal{Y}_{h}(\lambda)\in2\pi\mathbb{Z} &  & \textrm{avec}\,\lambda\in[-1,1]\\
\textrm{ou}\\
\mathcal{Z}_{h}(\lambda)\in2\pi\mathbb{Z} &  & \textrm{avec}\,\lambda\in[-1,1]\end{array}\right.\]
\[
\Leftrightarrow\lambda\in\left({\displaystyle \bigcup_{k\in I_{h}}}\mathcal{A}_{h}(2\pi k)\right)\bigcup\left({\displaystyle \bigcup_{l\in J_{h}}}\mathcal{B}_{h}(2\pi l)\right)\]
où on à posé\[
I_{h}:=\left\{ k\in\mathbb{Z}/2\pi k\in\mathcal{Y}_{h}\left([-1,1]\right)\right\} =\frac{\mathcal{Y}_{h}\left([-1,1]\right)}{2\pi}\cap\mathbb{Z}\]
 et\[
J_{h}:=\left\{ l\in\mathbb{Z}/2\pi l\in\mathcal{Z}_{h}\left([-1,1]\right)\right\} =\frac{\mathcal{Z}_{h}\left([-1,1]\right)}{2\pi}\cap\mathbb{Z}.\]
En résumant nous avons alors la :

\begin{prop}
L'équation ${\displaystyle {\displaystyle {\displaystyle \left({\displaystyle P_{h}}-h^{\alpha}\lambda I_{d}\right)}}}u_{h}=O(h^{\infty})$
admet une solution $u_{h}\in L^{2}(\mathbb{R})$ non triviale avec
son microsupport $MS(u_{h})=p^{-1}\{0\}$ si et seulement si :\[
\lambda\in\left({\displaystyle \bigcup_{k\in I_{h}}}\mathcal{A}_{h}(2\pi k)\right)\bigcup\left({\displaystyle \bigcup_{k\in J_{h}}}\mathcal{B}_{h}(2\pi k)\right)\]
où $\mathcal{A}_{h}=\mathcal{Y}_{h}^{-1}$, $\mathcal{B}_{h}=\mathcal{Z}_{h}^{-1}$
et $I_{h}=\frac{\mathcal{Y}_{h}\left([-1,1]\right)}{2\pi}\cap\mathbb{Z}$,
$J_{h}=\frac{\mathcal{Z}_{h}\left([-1,1]\right)}{2\pi}\cap\mathbb{Z}$.
\end{prop}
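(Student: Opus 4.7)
Le plan est d'assembler en une formule explicite les deux r�sultats d�j� �tablis~: le th�or�me 3.2 (formule de Colin de Verdi�re-Parisse) et la proposition 4.4 (bijectivit� de $\mathcal{Y}_{h}$ et $\mathcal{Z}_{h}$). On commence par sp�cialiser le th�or�me 3.2 avec $E=\lambda h^{\alpha}$ o� $\lambda\in[-1,1]$ et $\alpha\in[\frac{1}{2},1[$~; pour $h$ assez petit, $|\lambda h^{\alpha}|\leq 1$ garantit que les hypoth�ses du th�or�me sont v�rifi�es, et on obtient que l'existence d'une microsolution $L^{2}$ non triviale avec microsupport sur la fibre singuli�re �quivaut � l'�quation transcendante (3.1). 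Avec les notations $f_{h}$ et $g_{h}$, celle-ci se r��crit sous la forme compacte
\[
\cos(f_{h}(\lambda))=\frac{\cos(g_{h}(\lambda))}{\sqrt{1+\exp(2\pi\varepsilon(\lambda h^{\alpha})/h)}}.
\]

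La premi�re �tape consistera � inverser le cosinus. Comme $\sqrt{1+e^{2\pi\varepsilon/h}}\geq 1$ et $|\cos(g_{h}(\lambda))|\leq 1$, le membre de droite a une valeur absolue dans $[0,1]$, ce qui rend l'application de $\arccos$ l�gitime. L'identit� classique $\cos A=\cos B \Leftrightarrow A\equiv\pm B\;[2\pi]$ d�coupera alors la condition en deux branches, selon le signe plac� devant l'arccosinus~: soit $\mathcal{Y}_{h}(\lambda)\in 2\pi\mathbb{Z}$, soit $\mathcal{Z}_{h}(\lambda)\in 2\pi\mathbb{Z}$, o� $\mathcal{Y}_{h},\mathcal{Z}_{h}$ sont exactement les fonctions d�finies juste avant la proposition 4.4.

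La seconde �tape exploitera la proposition 4.4 d�j� d�montr�e~: pour $h$ assez petit, $\mathcal{Y}_{h}$ et $\mathcal{Z}_{h}$ sont des bijections strictement d�croissantes de $[-1,1]$ sur leurs images respectives. On pose alors $\mathcal{A}_{h}:=\mathcal{Y}_{h}^{-1}$ et $\mathcal{B}_{h}:=\mathcal{Z}_{h}^{-1}$~; la condition $\mathcal{Y}_{h}(\lambda)=2\pi k$ avec $\lambda\in[-1,1]$ devient simplement $\lambda=\mathcal{A}_{h}(2\pi k)$, � condition que $2\pi k\in\mathcal{Y}_{h}([-1,1])$, c'est-�-dire $k\in I_{h}$~; de mani�re sym�trique pour $\mathcal{Z}_{h},\mathcal{B}_{h},J_{h}$. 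En prenant la r�union sur $k\in I_{h}$ et $l\in J_{h}$, on obtiendra exactement la formule annonc�e.

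L'�nonc� est essentiellement un reconditionnement de la formule de Colin de Verdi�re-Parisse modulo l'inversibilit� des deux fonctions auxiliaires $\mathcal{Y}_{h},\mathcal{Z}_{h}$, et il n'y a donc pas d'obstacle r�el~: tout le travail technique (encadrement des d�riv�es, contr�le des restes asymptotiques en puissances de $h$) a d�j� �t� effectu� dans la preuve de la proposition 4.4. Le seul point d'attention sera la d�finition correcte des ensembles d'indices $I_{h},J_{h}$, afin d'�viter de consid�rer des entiers $k$ tels que $2\pi k$ sorte du domaine de d�finition de $\mathcal{A}_{h}$ ou $\mathcal{B}_{h}$.
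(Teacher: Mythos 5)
Votre d�marche est correcte et co�ncide essentiellement avec celle du texte~: sp�cialisation du th�or�me 3.2 en $E=\lambda h^{\alpha}$, inversion du cosinus via $\cos A=\cos B\Leftrightarrow A\equiv\pm B\;[2\pi]$ (l�gitime car le membre de droite est de module au plus $1$), identification des deux branches avec $\mathcal{Y}_{h}$ et $\mathcal{Z}_{h}$, puis inversion de ces bijections gr�ce � la proposition 4.4 avec les ensembles d'indices $I_{h}$ et $J_{h}$. La proposition n'est qu'un r�capitulatif de ces �tapes et votre proposition le restitue fid�lement.
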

Notons bien que les ensembles $I_{h}$ et $J_{h}$ ne sont pas vides,
en effet :

\begin{prop}
Pour $h$ assez petit, nous avons les encadrements suivants :\[
E\left[-\frac{\alpha h^{\alpha-1}\ln(h)}{\pi\sqrt{-V^{''}(0)}}+O(h^{\alpha-1})\right]\leq\textrm{Card}(I_{h})\leq E\left[-\frac{h^{\alpha-1}\ln(h)}{\pi\sqrt{-V^{''}(0)}}+O(h^{\alpha-1})\right]+1\]
où $E[x]$ désigne la partie entière de $x.$ On a le même encadrement
pour le cardinal de l'ensemble $J_{h}$.
\end{prop}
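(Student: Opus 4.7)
L'id�e est d'utiliser la monotonie stricte de $\mathcal{Y}_{h}$ et $\mathcal{Z}_{h}$ �tablie dans la proposition 4.3 pour d�terminer pr�cis�ment l'image de $[-1,1]$ sous ces fonctions, puis de compter le nombre de multiples de $2\pi$ tombant dans ces images.

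D'abord, comme pour $h$ assez petit $\mathcal{Y}_{h}^{\prime}(\lambda)<0$ uniform�ment sur $[-1,1]$, la fonction $\mathcal{Y}_{h}$ est strictement d�croissante et continue sur $[-1,1]$, donc son image est l'intervalle $\left[\mathcal{Y}_{h}(1),\mathcal{Y}_{h}(-1)\right]$. Par d�finition on a :
\[
\textrm{Card}(I_{h})=\textrm{Card}\left(\left[\frac{\mathcal{Y}_{h}(1)}{2\pi},\frac{\mathcal{Y}_{h}(-1)}{2\pi}\right]\cap\mathbb{Z}\right).
\]
Or pour tout intervalle ferm� $[a,b]$ de longueur $\ell=b-a\geq0$, le nombre d'entiers qu'il contient v�rifie $E[\ell]\leq\textrm{Card}\left([a,b]\cap\mathbb{Z}\right)\leq E[\ell]+1$. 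Il nous suffit donc d'encadrer la longueur $\ell_{h}:=\frac{\mathcal{Y}_{h}(-1)-\mathcal{Y}_{h}(1)}{2\pi}.$

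Ensuite, par le th�or�me fondamental du calcul diff�rentiel, on a
\[
\mathcal{Y}_{h}(-1)-\mathcal{Y}_{h}(1)=-\int_{-1}^{1}\mathcal{Y}_{h}^{\prime}(\lambda)\, d\lambda=\int_{-1}^{1}\left(-\mathcal{Y}_{h}^{\prime}(\lambda)\right)d\lambda.
\]
Pour $h$ suffisamment petit on a $\ln(h)<0$, ainsi $h^{\alpha-1}\ln(h)<0$; l'encadrement uniforme de la proposition 4.3 donne alors, en prenant les oppos�s :
\[
-\frac{\alpha h^{\alpha-1}\ln(h)}{\sqrt{-V^{\prime\prime}(0)}}+O(h^{\alpha-1})\leq-\mathcal{Y}_{h}^{\prime}(\lambda)\leq-\frac{h^{\alpha-1}\ln(h)}{\sqrt{-V^{\prime\prime}(0)}}+O(h^{\alpha-1}).
\]
En int�grant sur $[-1,1]$ et en divisant par $2\pi$, on obtient donc
\[
-\frac{\alpha h^{\alpha-1}\ln(h)}{\pi\sqrt{-V^{\prime\prime}(0)}}+O(h^{\alpha-1})\leq\ell_{h}\leq-\frac{h^{\alpha-1}\ln(h)}{\pi\sqrt{-V^{\prime\prime}(0)}}+O(h^{\alpha-1}).
\]

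Enfin, comme la fonction $E[\,\cdot\,]$ est croissante, l'encadrement $E[\ell_{h}]\leq\textrm{Card}(I_{h})\leq E[\ell_{h}]+1$ combin� � l'encadrement ci-dessus de $\ell_{h}$ donne pr�cis�ment l'in�galit� annonc�e. Le m�me raisonnement, appliqu� � $\mathcal{Z}_{h}$ en lieu et place de $\mathcal{Y}_{h}$ (la proposition 4.3 fournit la m�me borne sur $\mathcal{Z}_{h}^{\prime}$), conclut pour $\textrm{Card}(J_{h})$. L'unique point technique r�side dans la gestion des signes (le facteur $\ln(h)$ �tant n�gatif), mais les r�sultats d'analyse r�elle utilis�s (monotonie, int�gration d'un encadrement, partie enti�re) sont standards; l'essentiel du travail a d�j� �t� accompli dans la proposition 4.3.
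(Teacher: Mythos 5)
Your proof is correct and follows essentially the same route as the paper's. The paper obtains the diameter of $\mathcal{Y}_{h}([-1,1])$ by the mean value theorem (writing $\mathcal{Y}_{h}(-1)-\mathcal{Y}_{h}(1)=-2\mathcal{Y}_{h}^{\prime}(\xi)$) and then applies the uniform bound on $\mathcal{Y}_{h}^{\prime}$; you instead integrate the derivative bound over $[-1,1]$, which is an equivalent formulation of the same step. You also spell out the elementary counting fact $E[\ell]\leq\textrm{Card}([a,b]\cap\mathbb{Z})\leq E[\ell]+1$, which the paper leaves implicit with ``la suite de la preuve est alors directe''; this is a welcome clarification rather than a new argument.
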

\begin{proof}
On va faire la preuve uniquement pour l'ensemble $I_{h}$. Comme la
fonction $\mathcal{Y}_{h}$ est strictement décroissante sur le compact
$\left[-1,1\right]$, le diamètre du compact $\mathcal{Y}_{h}\left(\left[-1,1\right]\right)$
est simplement donné par la relation:\[
\textrm{diam}\left(\mathcal{Y}_{h}\left(\left[-1,1\right]\right)\right)=\mathcal{Y}_{h}(-1)-\mathcal{Y}_{h}(1).\]
Par le théorème des accroissements finis il existe $\xi\in\left]-1,1\right[$
tels que : \[
\mathcal{Y}_{h}(-1)-\mathcal{Y}_{h}(1)=-2\mathcal{Y}_{h}^{\prime}(\xi)>0.\]
On obtient donc l'encadrement suivant :\[
-2\alpha\frac{h^{\alpha-1}\ln(h)}{\sqrt{-V^{''}(0)}}+O(h^{\alpha-1})\leq\textrm{diam}\left(\mathcal{Y}_{h}\left(\left[-1,1\right]\right)\right)\leq\frac{-2h^{\alpha-1}\ln(h)}{\sqrt{-V^{''}(0)}}+O(h^{\alpha-1}).\]
La suite de la preuve est alors directe. 
\end{proof}

\subsection*{Quinconce et interstice }

Comme on l'a vu, dans le compact $\left[-h^{\alpha},h^{\alpha}\right]$
(avec $\alpha\geq\frac{1}{2}$) le spectre semi-classique de l'opérateur
:\[
P_{h}=-\frac{h^{2}}{2}\frac{d^{2}}{dx^{2}}+V\]
est constitué de deux familles : d'abord la famille\[
\alpha_{k}(h):=h^{\alpha}\mathcal{A}_{h}(2\pi k),\, k\in I_{h}\]
 puis la famille\[
\beta_{l}(h):=h^{\alpha}\mathcal{B}_{h}(2\pi l),\, l\in J_{h}.\]
 Donnons les propriétés importantes de ces deux familles.

\begin{prop}
Pour h assez petit, les deux familles de réels $\left(\alpha_{k}(h)\right)_{k\in I_{h}}$
et $\left(\beta_{l}(h)\right)_{l\in J_{h}}$ sont strictement décroissantes.
\end{prop}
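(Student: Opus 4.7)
Le plan est de déduire cette proposition directement de la stricte décroissance des fonctions $\mathcal{Y}_h$ et $\mathcal{Z}_h$ établie dans la proposition 4.4. En effet, rappelons que par construction $\alpha_k(h) = h^{\alpha}\mathcal{A}_h(2\pi k)$ avec $\mathcal{A}_h = \mathcal{Y}_h^{-1}$, et de même $\beta_l(h) = h^{\alpha}\mathcal{B}_h(2\pi l)$ avec $\mathcal{B}_h = \mathcal{Z}_h^{-1}$, où $h^{\alpha}>0$.

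Premièrement, par la proposition 4.4, pour $h$ assez petit, la fonction $\mathcal{Y}_h \colon [-1,1] \to \mathcal{Y}_h([-1,1])$ est une bijection strictement décroissante (car on a l'encadrement uniforme $\mathcal{Y}_h^{\prime}(\lambda) \leq \alpha h^{\alpha-1}\ln(h)/\sqrt{-V^{\prime\prime}(0)} + O(h^{\alpha-1}) < 0$ dès que $h$ est suffisamment petit, puisque $\ln(h) \to -\infty$ domine le terme $O(h^{\alpha-1})$). Il en résulte classiquement que sa bijection réciproque $\mathcal{A}_h$ est également strictement décroissante sur $\mathcal{Y}_h([-1,1])$.

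Deuxièmement, pour $k, k' \in I_h \subset \mathbb{Z}$ avec $k < k'$, on a $2\pi k < 2\pi k'$ dans $\mathcal{Y}_h([-1,1])$, donc $\mathcal{A}_h(2\pi k) > \mathcal{A}_h(2\pi k')$ par décroissance stricte de $\mathcal{A}_h$. En multipliant par $h^{\alpha} > 0$ on obtient $\alpha_k(h) > \alpha_{k'}(h)$, ce qui est précisément la stricte décroissance de la famille $\left(\alpha_k(h)\right)_{k \in I_h}$. Le même raisonnement s'applique mot pour mot à la famille $\left(\beta_l(h)\right)_{l \in J_h}$ en remplaçant $\mathcal{Y}_h$ par $\mathcal{Z}_h$ et $\mathcal{A}_h$ par $\mathcal{B}_h$.

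Il n'y a en réalité pas d'obstacle technique pour cette proposition : tout le travail sérieux a déjà été fait dans la démonstration de la proposition 4.4, où l'on a estimé soigneusement $\mathcal{Y}_h^{\prime}$ (et $\mathcal{Z}_h^{\prime}$) et montré que le terme dominant $h^{\alpha-1}\ln(h)/\sqrt{-V^{\prime\prime}(0)}$ est strictement négatif et absorbe les termes d'erreur $O(h^{\alpha-1})$. La présente proposition est donc un simple corollaire formel de la bijectivité stricte établie précédemment, combinée à la positivité du facteur $h^{\alpha}$.
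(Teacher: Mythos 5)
Your proof is correct and follows exactly the same route as the paper's: Proposition 4.4 gives the strict decrease of $\mathcal{Y}_h$ and $\mathcal{Z}_h$, hence of their inverses $\mathcal{A}_h$ and $\mathcal{B}_h$, and multiplying by $h^{\alpha}>0$ transfers this to the families $(\alpha_k(h))_k$ and $(\beta_l(h))_l$. The paper states this in one line; you have simply spelled out the same argument.
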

\begin{proof}
cela tiens juste du fait que les fonctions $\mathcal{Y}_{h}$ et $\mathcal{Z}_{h}$
sont $\mathcal{C}^{1}$ et strictement décroissantes, donc leurs bijections
réciproques le sont aussi.
\end{proof}
\begin{lem}
La famille\[
\left\{ \left(\alpha_{n}(h)\right)_{n\in I_{h}},\left(\beta_{l}(h)\right)_{l\in J_{h}}\right\} \]
 est une famille de réels deux à deux bien distincts.
\end{lem}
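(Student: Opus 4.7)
Mon plan est le suivant. Puisque la proposition pr\'ec\'edente garantit d\'ej\`a la stricte d\'ecroissance de chacune des deux familles $(\alpha_k(h))_{k\in I_h}$ et $(\beta_l(h))_{l\in J_h}$, les \'el\'ements sont deux \`a deux distincts \emph{\`a l'int\'erieur} de chaque famille. Il reste donc uniquement \`a exclure le cas crois\'e~: $\alpha_k(h) = \beta_l(h)$ pour un certain couple $(k,l)\in I_h\times J_h$.

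Je raisonnerais par l'absurde. Supposons qu'un tel couple existe. En divisant par $h^\alpha$ et en posant $\lambda := \mathcal{A}_h(2\pi k) = \mathcal{B}_h(2\pi l) \in [-1,1]$, on obtient simultan\'ement $\mathcal{Y}_h(\lambda)=2\pi k$ et $\mathcal{Z}_h(\lambda)=2\pi l$. D'apr\`es la d\'efinition de $\mathcal{Y}_h$ et $\mathcal{Z}_h$, la diff\'erence se calcule imm\'ediatement~:
\[
\mathcal{Z}_h(\lambda) - \mathcal{Y}_h(\lambda) = 2\arccos\!\left(\frac{\cos(g_h(\lambda))}{\sqrt{1+\exp(2\pi\varepsilon(\lambda h^\alpha)/h)}}\right) = 2\pi(l-k).
\]
On devrait donc avoir $\arccos(\cdot) = \pi\cdot(l-k)$, ce qui n'est possible, compte tenu de $\arccos : [-1,1]\to[0,\pi]$, que si l'argument vaut exactement $\pm 1$.

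La clef est alors l'observation suivante~: pour tout $\lambda\in[-1,1]$ et tout $h>0$,
\[
\sqrt{1+\exp(2\pi\varepsilon(\lambda h^\alpha)/h)} \geq \sqrt{2} > 1,
\]
et donc l'argument du $\arccos$ est major\'e en valeur absolue par $1/\sqrt{2}<1$ de fa\c{c}on \emph{strict}e. Cette in\'egalit\'e stricte interdit les valeurs $\pm 1$, ce qui contredit l'hypoth\`ese et ach\`eve la preuve. Je n'anticipe pas d'obstacle technique~: le cœur de l'argument est la remarque que le facteur $e^{2\pi\varepsilon/h}$ est toujours strictement positif, de sorte que la fonction $\arccos$ composante ne peut jamais atteindre les extr\'emit\'es $0$ et $\pi$ de son intervalle image, et donc $\mathcal{Z}_h(\lambda)-\mathcal{Y}_h(\lambda)\in(0,2\pi)$ n'est jamais un multiple entier de $2\pi$.
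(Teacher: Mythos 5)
Votre preuve suit essentiellement la m\^eme strat\'egie que celle du papier : raisonnement par l'absurde, observation que $\mathcal{Z}_h(\lambda)-\mathcal{Y}_h(\lambda)=2\arccos(\cdots)$ devrait appartenir \`a $2\pi\mathbb{Z}$, puis contradiction via le fait que l'argument du $\arccos$ ne peut jamais valoir $\pm1$. L'argument d\'ecisif -- le d\'enominateur $\sqrt{1+\exp(2\pi\varepsilon/h)}$ est strictement sup\'erieur \`a $1$ car l'exponentielle est strictement positive -- est le m\^eme que dans le papier, qui le formule sous la forme $\cos^2(g_h(\lambda)) \leq 1 < 1+\exp(2\pi\varepsilon/h)$.

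Une petite inexactitude toutefois : l'in\'egalit\'e $\sqrt{1+\exp(2\pi\varepsilon(\lambda h^\alpha)/h)}\geq\sqrt{2}$ que vous \'enoncez \og pour tout $\lambda\in[-1,1]$ \fg{} est fausse d\`es que $\varepsilon(\lambda h^\alpha)<0$, c'est-\`a-dire essentiellement pour $\lambda<0$ et $h$ petit : dans ce cas $\exp(2\pi\varepsilon/h)\to0$ et le d\'enominateur tend vers $1$, non vers $\sqrt{2}$. Cela n'invalide pas la conclusion, car la minoration stricte $>1$ -- que vous invoquez correctement \`a la fin -- suffit pour interdire les valeurs $\pm1$ du cosinus ; mais il faut bien s'appuyer sur $>1$, et non sur $\geq\sqrt{2}$, si l'on veut traiter tout le compact $[-1,1]$.
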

\begin{proof}
Les familles $\left\{ \alpha_{n}(h)\right\} _{n\in I_{h}}$ et \textbf{$\left\{ \beta_{l}(h)\right\} _{l\in J_{h}}$}
étant des familles de réels strictement décroissantes, il suffit juste
de vérifier que ces deux familles n'ont pas de valeur commune. Raisonnons
par l'absurde : supposons qu'il existent $(k,l)\in I_{h}\times J_{h}$
tels que $\alpha_{k}(h)=\beta_{l}(h)$, ie : $\mathcal{A}_{h}(2\pi k)=\mathcal{B}_{h}(2\pi l)$.
En notant par $\lambda$ cette valeur commune, c'est-à-dire :\[
\lambda:=\mathcal{A}_{h}(2\pi k)=\mathcal{B}_{h}(2\pi l)\]
puis en appliquant les fonctions $\mathcal{Y}_{h}$ et $\mathcal{Z}_{h}$
sur le réel $\lambda$, on a que \[
\mathcal{Y}_{h}(\lambda)=2\pi k\in2\pi\mathbb{Z}\;\;\textrm{et}\;\;\mathcal{Z}_{h}(\lambda)=2\pi l\in2\pi\mathbb{Z}\]
et par conséquent : \[
\mathcal{Y}_{h}(\lambda)-\mathcal{Z}_{h}(\lambda)\in2\pi\mathbb{Z}\]
donc par définition des fonctions $\mathcal{Y}_{h}$ et $\mathcal{Z}_{h}$
nous avons \[
-2\arccos\left(\frac{\cos\left(g_{h}(\lambda)\right)}{\sqrt{1+\exp\left(2\pi\varepsilon(\lambda h^{\alpha})/h\right)}}\right)\in2\pi\mathbb{Z}\]
d'où :\[
\arccos\left(\frac{\cos\left(g_{h}(\lambda)\right)}{\sqrt{1+\exp\left(2\pi\varepsilon(\lambda h^{\alpha})/h\right)}}\right)\in\pi\mathbb{Z}\]
ainsi nécessairement on a \[
\frac{\cos\left(g_{h}(\lambda)\right)}{\sqrt{1+\exp\left(2\pi\varepsilon(\lambda h^{\alpha})/h\right)}}\in\left\{ -1,1\right\} .\]
Ce qui implique finalement l'égalité : \[
\underbrace{\cos^{2}\left(g_{h}(\lambda)\right)}_{\leq1}=\underbrace{1+\exp\left(2\pi\varepsilon(\lambda h^{\alpha})/h\right)}_{>1}\]
qui est absurde, d'où le lemme proposé.
\end{proof}

\subsection*{Quinconce et interstice }

On va maintenant s'intéresser à comparer ces deux familles entre elles,
pour cela il faut prendre des indices appartenant à $I_{h}\cap J_{h}$.
On va donc d'abord s'assurer que $I_{h}\cap J_{h}\mathbb{\subset Z}$
est non vide. 

\begin{prop}
Pour h assez petit, nous avons\[
\textrm{E}\left[\frac{\alpha(\xi-1)h^{\alpha-1}\ln(h)}{\pi\sqrt{-V^{''}(0)}}+O(h^{\alpha-1})\right]\leq\textrm{Card}(I_{h}\cap J_{h})\leq\textrm{E}\left[\frac{(\xi-1)h^{\alpha-1}\ln(h)}{\pi\sqrt{-V^{''}(0)}}+O(h^{\alpha-1})\right]+1\]
où $\xi\in]-1,1[$.
\end{prop}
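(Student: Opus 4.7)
Le plan est de compter explicitement les entiers $k$ tels que $2\pi k$ appartient simultan\'ement aux images $\mathcal{Y}_h([-1,1])$ et $\mathcal{Z}_h([-1,1])$. D'abord, je d\'eterminerai l'intersection de ces deux intervalles : puisque $\mathcal{Y}_h$ et $\mathcal{Z}_h$ sont strictement d\'ecroissantes (Proposition 4.4) et que leur diff\'erence $\mathcal{Z}_h-\mathcal{Y}_h=2\arccos(\cdots)\geq 0$, on a $\mathcal{Y}_h([-1,1])=[\mathcal{Y}_h(1),\mathcal{Y}_h(-1)]$ et $\mathcal{Z}_h([-1,1])=[\mathcal{Z}_h(1),\mathcal{Z}_h(-1)]$ avec $\mathcal{Y}_h(\pm 1)\leq\mathcal{Z}_h(\pm 1)$ ; l'intersection est donc l'intervalle $[\mathcal{Z}_h(1),\mathcal{Y}_h(-1)]$. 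Je v\'erifierai ensuite la non-vacuit\'e de cet intervalle pour $h$ assez petit : le diam\`etre $\mathcal{Y}_h(-1)-\mathcal{Y}_h(1)$ est d'ordre $|h^{\alpha-1}\ln h|$ (Proposition 4.4), alors que $\mathcal{Z}_h(1)-\mathcal{Y}_h(1)=2\arccos(\cdots)$ reste dans $[0,2\pi]$, d'o\`u $\mathcal{Y}_h(-1)-\mathcal{Z}_h(1)>0$ pour $h$ petit.

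Pour faire appara\^itre la forme annonc\'ee avec le facteur $(\xi-1)$, j'utiliserai la bijectivit\'e de $\mathcal{Z}_h$ : je pose $\xi:=\mathcal{B}_h(\mathcal{Y}_h(-1))$, et les comparaisons d'ordres pr\'ec\'edentes montrent que $\xi\in\,]-1,1[$, avec $\mathcal{Z}_h(\xi)=\mathcal{Y}_h(-1)$. La longueur de l'intervalle s'\'ecrit alors $\mathcal{Z}_h(\xi)-\mathcal{Z}_h(1)$, et le th\'eor\`eme des accroissements finis appliqu\'e \`a $\mathcal{Z}_h$ donne l'existence de $\eta$ entre $\xi$ et $1$ tel que cette diff\'erence vaille $(\xi-1)\mathcal{Z}_h'(\eta)$. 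Les bornes sur $\mathcal{Z}_h'$ sont strictement identiques \`a celles \'etablies sur $\mathcal{Y}_h'$ dans la Proposition 4.4 (puisque $\mathcal{Z}_h$ et $\mathcal{Y}_h$ ne diff\`erent que par le terme $\arccos$ de d\'eriv\'ee $O(h^{\alpha-1})$, d\'ej\`a contr\^ol\'e dans cette preuve), d'o\`u, en retournant les in\'egalit\'es puisque $\xi-1<0$,
\[
\frac{\alpha(\xi-1)h^{\alpha-1}\ln h}{\sqrt{-V''(0)}}+O(h^{\alpha-1})\;\leq\;(\xi-1)\mathcal{Z}_h'(\eta)\;\leq\;\frac{(\xi-1)h^{\alpha-1}\ln h}{\sqrt{-V''(0)}}+O(h^{\alpha-1}).
\]

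Enfin, je conclurai comme dans la preuve de la Proposition 4.7 : le nombre d'entiers $k$ tels que $2\pi k$ tombe dans un intervalle de longueur $L$ est encadr\'e par $\textrm{E}[L/(2\pi)]$ et $\textrm{E}[L/(2\pi)]+1$ ; appliqu\'e \`a $L=(\xi-1)\mathcal{Z}_h'(\eta)$ avec les estim\'ees ci-dessus, ceci donne directement l'encadrement demand\'e sur $\textrm{Card}(I_h\cap J_h)$ apr\`es division par $2\pi$. Le principal obstacle technique sera de recopier, pour $\mathcal{Z}_h'$, les estim\'ees men\'ees sur $\mathcal{Y}_h'$ dans la Proposition 4.4 : il n'y a pas de nouvelle difficult\'e conceptuelle puisque le terme $+\arccos(\cdots)$ (au lieu de $-\arccos(\cdots)$) a la m\^eme d\'eriv\'ee au signe pr\`es. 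Les points secondaires \`a soigner sont la non-vacuit\'e de l'intersection et l'appartenance stricte $\xi\in\,]-1,1[$, qui tous deux d\'ecoulent simplement du fait que $\mathcal{Z}_h-\mathcal{Y}_h$ est uniform\'ement born\'ee tandis que la variation de $\mathcal{Y}_h$ sur $[-1,1]$ est d'ordre $|h^{\alpha-1}\ln h|\to+\infty$.
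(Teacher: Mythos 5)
Votre démonstration est correcte et suit essentiellement la même démarche que celle du papier : identification de l'intersection $[\mathcal{Z}_h(1),\mathcal{Y}_h(-1)]$, introduction de $\xi$ tel que $\mathcal{Z}_h(\xi)=\mathcal{Y}_h(-1)$ (par le théorème des valeurs intermédiaires dans le papier, via la bijection $\mathcal{B}_h$ chez vous, ce qui revient au même), puis accroissements finis sur $\mathcal{Z}_h$ et report des bornes sur $\mathcal{Z}_h'$ établies dans la proposition 4.4. Seule différence superficielle : le papier cite la proposition 4.6 (et non 4.7) pour l'argument de comptage, mais l'idée et les estimées sont identiques.
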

\begin{proof}
Écrivons juste la différence entre les fonctions $\mathcal{Y}_{h}$
et $\mathcal{Z}_{h}$ , pour tout $\lambda\in[-1,1]$ nous avons donc
que\[
\mathcal{Y}_{h}(\lambda)-\mathcal{Z}_{h}(\lambda)=\underbrace{-2\arccos\left(\frac{\cos\left(g_{h}(\lambda)\right)}{\sqrt{1+\exp\left(2\pi\varepsilon(\lambda h^{\alpha})/h\right)}}\right)}_{\in\left[-2\pi,0\right]}\]
donc en particulier\[
\mathcal{Y}_{h}(-1)-\mathcal{Z}_{h}(-1)<0\;\;\textrm{et}\;\;\mathcal{Y}_{h}(1)-\mathcal{Z}_{h}(1)<0\]
(pour le strict dans les inégalités, voir la démonstration du précédent
lemme).

Ensuite comme d'après la preuve de la proposition 4.6 on a l'encadrement
:\[
-2\alpha\frac{h^{\alpha-1}\ln(h)}{\sqrt{-V^{''}(0)}}+O(h^{\alpha-1})\leq\textrm{diam}(\mathcal{Y}_{h}([-1,1]))\leq\frac{-2h^{\alpha-1}\ln(h)}{\sqrt{-V^{''}(0)}}+O(h^{\alpha-1})\]
et en utilisant aussi que pour tout $\lambda\in[-1,1]$ \[
\left|\mathcal{Y}_{h}(\lambda)-\mathcal{Z}_{h}(\lambda)\right|\leq2\pi\]
on voit immédiatement que pour $h$ assez petit $\mathcal{Y}_{h}\left(\left[-1,1\right]\right)\cap\mathcal{Z}_{h}(\left[-1,1\right])\neq\emptyset$
; et on a même mieux, en effet comme: \[
\textrm{diam}\left(\mathcal{Y}_{h}\left(\left[-1,1\right]\right)\cap\mathcal{Z}_{h}\left(\left[-1,1\right]\right)\right)=\mathcal{Y}_{h}(-1)-\mathcal{Z}_{h}(1)\]
puis que\[
\mathcal{Z}_{h}(1)\leq\mathcal{Y}_{h}(-1)\leq\mathcal{Z}_{h}(-1)\]
par le théorème des valeurs intermédiaires il existe $\xi\in\left[-1,1\right]$
tels que\[
\mathcal{Y}_{h}(-1)=\mathcal{Z}_{h}(\xi)\]
par conséquent nous avons \[
\textrm{diam}\left(\mathcal{Y}_{h}\left(\left[-1,1\right]\right)\cap\mathcal{Z}_{h}\left(\left[-1,1\right]\right)\right)=\mathcal{Z}_{h}(\xi)-\mathcal{Z}_{h}(1)\]
\[
=\mathcal{Z}_{h}^{\prime}(\theta)(\xi-1)\]
où $\theta\in\left]\xi,1\right[$ est donné par le théorème des accroissements
finis, d'où au final :\[
\frac{\alpha h^{\alpha-1}\ln(h)(\xi-1)}{\sqrt{-V^{''}(0)}}+O(h^{\alpha-1})\leq\textrm{diam}\left(\mathcal{Y}_{h}\left(\left[-1,1\right]\right)\cap\mathcal{Z}_{h}\left(\left[-1,1\right]\right)\right)\leq\frac{h^{\alpha-1}\ln(h)(\xi-1)}{\sqrt{-V^{''}(0)}}+O(h^{\alpha-1})\]
et on en déduit alors la proposition.
\end{proof}
\begin{prop}
Pour h assez petit et pour tout $k\in\frac{\mathcal{Y}_{h}\left([-1,1]\right)\cap\mathcal{Z}_{h}\left([-1,1]\right)}{2\pi}\cap\mathbb{Z}$,
on a que \[
\alpha_{k}(h)<\beta_{k}(h).\]

\end{prop}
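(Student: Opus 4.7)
La strat\'egie repose sur la relation simple entre $\mathcal{Y}_h$ et $\mathcal{Z}_h$. Par d\'efinition de ces deux fonctions on a, pour tout $\lambda\in[-1,1]$,
\[
\mathcal{Z}_h(\lambda)-\mathcal{Y}_h(\lambda)=2\arccos\!\left(R_h(\lambda)\right),\qquad R_h(\lambda):=\frac{\cos(g_h(\lambda))}{\sqrt{1+\exp(2\pi\varepsilon(\lambda h^{\alpha})/h)}}.
\]
Je commencerais donc par observer que $R_h(\lambda)\in(-1,1)$ strictement: comme $\cos^2(g_h(\lambda))\leq 1<1+\exp(2\pi\varepsilon(\lambda h^{\alpha})/h)$, on a bien $|R_h(\lambda)|<1$ (c'est exactement l'argument d\'eja invoqu\'e \`a la fin de la preuve du lemme 4.8). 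Par cons\'equent $2\arccos(R_h(\lambda))\in(0,2\pi)$ strictement, autrement dit
\[
\mathcal{Z}_h(\lambda)>\mathcal{Y}_h(\lambda)\qquad\text{pour tout }\lambda\in[-1,1].
\]

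Fixons alors $k\in\tfrac{1}{2\pi}\mathcal{Y}_h([-1,1])\cap\tfrac{1}{2\pi}\mathcal{Z}_h([-1,1])\cap\mathbb{Z}$ et posons $\mu:=\mathcal{A}_h(2\pi k)\in[-1,1]$ et $\nu:=\mathcal{B}_h(2\pi k)\in[-1,1]$, de sorte que $\mathcal{Y}_h(\mu)=2\pi k=\mathcal{Z}_h(\nu)$. La remarque pr\'ec\'edente appliqu\'ee au point $\mu$ donne
\[
\mathcal{Z}_h(\mu)=\mathcal{Y}_h(\mu)+2\arccos(R_h(\mu))>\mathcal{Y}_h(\mu)=2\pi k=\mathcal{Z}_h(\nu).
\]

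Or la proposition 4.4 assure que $\mathcal{Z}_h$ est strictement d\'ecroissante sur $[-1,1]$ pour $h$ assez petit; de l'in\'egalit\'e $\mathcal{Z}_h(\mu)>\mathcal{Z}_h(\nu)$ on d\'eduit alors $\mu<\nu$, c'est-\`a-dire $\mathcal{A}_h(2\pi k)<\mathcal{B}_h(2\pi k)$. En multipliant par $h^{\alpha}>0$ on obtient finalement $\alpha_k(h)<\beta_k(h)$, ce qui est le r\'esultat annonc\'e. L'argument est essentiellement alg\'ebrique: il n'y a pas de v\'eritable obstacle technique, la seule chose \`a v\'erifier \'etant la stricte positivit\'e de $\mathcal{Z}_h-\mathcal{Y}_h$, qui se ram\`ene \`a l'in\'egalit\'e stricte $\cos^2(g_h(\lambda))<1+\exp(2\pi\varepsilon(\lambda h^{\alpha})/h)$ d\'eja exploit\'ee plus haut.
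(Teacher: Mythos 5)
Your argument is correct and coincides with the paper's: both proofs hinge on the strict inequality $\mathcal{Y}_h(\lambda)<\mathcal{Z}_h(\lambda)$ (coming from $|\cos(g_h(\lambda))|<\sqrt{1+\exp(2\pi\varepsilon(\lambda h^{\alpha})/h)}$, i.e. the Lemma 4.8 observation), evaluated at $\lambda=\mathcal{A}_h(2\pi k)$, followed by a monotonicity step. You phrase that last step via the strict decrease of $\mathcal{Z}_h$ while the paper applies the strictly decreasing inverse $\mathcal{B}_h$ to $2\pi k<\mathcal{Z}_h(\mathcal{A}_h(2\pi k))$, but these are the same argument.
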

\begin{proof}
On sait déjà que pour tout $\lambda\in[-1,1]$ \[
\mathcal{Y}_{h}(\lambda)-\mathcal{Z}_{h}(\lambda)=\underbrace{-2\arccos\left(\frac{\cos\left(g_{h}(\lambda)\right)}{\sqrt{1+\exp\left(2\pi\varepsilon(\lambda h^{\alpha})/h\right)}}\right)}_{\in\left[-2\pi,0\right]}\leq0.\]
Le lemme 4.8 nous informe de plus que la précédente inégalité est
stricte : pour tout $\lambda\in[-1,1]$ \[
\mathcal{Y}_{h}(\lambda)<\mathcal{Z}_{h}(\lambda).\]
De là on déduit que pour tout $k\in\frac{\mathcal{Y}_{h}(\left[-1,1\right])\cap\mathcal{Z}_{h}(\left[-1,1\right])}{2\pi}\cap\mathbb{Z}$
\[
\mathcal{Y}_{h}\left(\mathcal{A}_{h}(2\pi k)\right)<\mathcal{Z}_{h}\left(\mathcal{A}_{h}(2\pi k)\right)\]
ie : \[
2\pi k<\mathcal{Z}_{h}\left(\mathcal{A}_{h}(2\pi k)\right).\]
Comme $2\pi k\in\mathcal{Z}_{h}\left([-1,1]\right)$ et que $\mathcal{B}_{h}\,:\,\mathcal{Z}_{h}\left([-1,1]\right)\rightarrow[-1,1]$
en appliquant la fonction $\mathcal{B}_{h}$ (qui est strictement
décroissante) sur la dernière inégalité on arrive a :\[
\mathcal{B}_{h}(2\pi k)>\mathcal{A}_{h}(2\pi k)\]
et donc \[
\alpha_{k}(h)<\beta_{k}(h).\]
Ce qui finit la preuve.
\end{proof}
Ensuite on a la :

\begin{prop}
Pour h assez petit tels et pour tout $k\in\frac{\mathcal{Y}_{h}([-1,1])\cap\mathcal{Z}_{h}([-1,1])}{2\pi}\cap\mathbb{Z}$
nous avons :\[
\beta_{k}(h)<\alpha_{k-1}(h).\]

\end{prop}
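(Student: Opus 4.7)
L'id�e est de transformer l'in�galit� demand�e en une in�galit� sur les fonctions $\mathcal{Y}_h$ et $\mathcal{Z}_h$, puis d'exploiter l'encadrement strict de leur diff�rence.

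Tout d'abord, puisque $\mathcal{Y}_h$ est strictement d�croissante (proposition 4.3) et admet $\mathcal{A}_h$ pour bijection r�ciproque, l'in�galit� $\beta_k(h)<\alpha_{k-1}(h)$ est �quivalente � $\mathcal{B}_h(2\pi k)<\mathcal{A}_h(2\pi(k-1))$, ce qui, en appliquant $\mathcal{Y}_h$ (d�croissante), �quivaut encore �
\[
\mathcal{Y}_h\!\left(\mathcal{B}_h(2\pi k)\right)>\mathcal{Y}_h\!\left(\mathcal{A}_h(2\pi(k-1))\right)=2\pi(k-1).
\]
Comme par d�finition $\mathcal{Z}_h(\mathcal{B}_h(2\pi k))=2\pi k$, il suffit donc d'�tablir l'encadrement strict
\[
\mathcal{Y}_h(\lambda)-\mathcal{Z}_h(\lambda)>-2\pi \quad \text{pour tout } \lambda\in[-1,1],
\]
et de l'appliquer au point $\lambda=\mathcal{B}_h(2\pi k)\in[-1,1]$.

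La deuxi�me �tape consiste donc � prouver cet encadrement. Par d�finition,
\[
\mathcal{Y}_h(\lambda)-\mathcal{Z}_h(\lambda)=-2\arccos\!\left(\frac{\cos(g_h(\lambda))}{\sqrt{1+\exp(2\pi\varepsilon(\lambda h^{\alpha})/h)}}\right),
\]
la valeur $-2\pi$ ne pouvant �tre atteinte que si l'argument de l'$\arccos$ vaut $-1$. Or cela imposerait $\cos^{2}(g_h(\lambda))=1+\exp(2\pi\varepsilon(\lambda h^{\alpha})/h)>1$, ce qui est absurde puisque $\cos^{2}\leq 1$. C'est exactement le m�me argument que celui utilis� au lemme 4.8 pour �carter la valeur $+1$ de l'argument; on l'applique ici pour �carter la valeur $-1$. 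On obtient ainsi la stricte in�galit� voulue.

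En combinant les deux �tapes : pour $\lambda=\mathcal{B}_h(2\pi k)$, on obtient
\[
\mathcal{Y}_h(\mathcal{B}_h(2\pi k))-2\pi k>-2\pi,
\]
soit $\mathcal{Y}_h(\mathcal{B}_h(2\pi k))>2\pi(k-1)$. Il ne reste plus qu'� appliquer $\mathcal{A}_h$ (strictement d�croissante sur $\mathcal{Y}_h([-1,1])$) pour conclure $\mathcal{B}_h(2\pi k)<\mathcal{A}_h(2\pi(k-1))$, c'est-�-dire $\beta_k(h)<\alpha_{k-1}(h)$. La principale subtilit� � surveiller est de s'assurer que $2\pi(k-1)$ appartient au domaine de $\mathcal{A}_h$, i.e. que $k-1\in I_h$ d�s lors que $k\in I_h\cap J_h$; c'est la seule obstruction de nature \og fronti�re\fg{} et elle se traite en remarquant que les encadrements des diam�tres des images $\mathcal{Y}_h([-1,1])$ et $\mathcal{Z}_h([-1,1])$ obtenus � la proposition 4.6 sont tous deux bien plus grands que $2\pi$ pour $h$ assez petit, ce qui garantit qu'un tel entier $k-1$ est encore dans $I_h$.
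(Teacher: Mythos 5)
Your proof is correct, and it takes a genuinely different and cleaner route than the paper. The paper introduces the auxiliary quantity $\zeta_{k}:=\mathcal{A}_{h}(2\pi k)-\mathcal{A}_{h}(2\pi(k-1))<0$, evaluates $\mathcal{Y}_{h}(\theta_{k}+\zeta_{k})-\mathcal{Z}_{h}(\theta_{k})$, and invokes the mean value theorem on $f_{h}$ to claim $f_{h}^{\prime}(\tau_{k})\zeta_{k}+O(1)>0$; the justification given there ("car $\zeta_{k}<0$") only explains the sign of the first term and not why it dominates the bounded $\arccos$ contributions, so the paper's argument actually leaves a gap that would require an explicit quantitative estimate of $f_{h}^{\prime}(\tau_{k})\zeta_{k}$. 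You avoid this entirely: you observe that $\mathcal{Y}_{h}-\mathcal{Z}_{h}=-2\arccos(\cdot)>-2\pi$ strictly (the value $-2\pi$ being excluded by exactly the contradiction $\cos^{2}(g_{h})\leq 1<1+e^{2\pi\varepsilon/h}$ already used in Lemma 4.8), evaluate at $\lambda=\mathcal{B}_{h}(2\pi k)$ to get $\mathcal{Y}_{h}(\mathcal{B}_{h}(2\pi k))>2\pi(k-1)$, and apply the decreasing bijection $\mathcal{A}_{h}$. This is the natural mirror of the paper's own proof of Proposition 4.9 (which bounds $\mathcal{Y}_{h}-\mathcal{Z}_{h}<0$ and evaluates at $\mathcal{A}_{h}(2\pi k)$), so it fits the structure of the surrounding text better than the published argument and does not rely on the implicit smallness comparison of $\zeta_{k}$. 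One caveat: your closing remark that the diameter estimates of Proposition 4.6 guarantee $k-1\in I_{h}$ is not actually correct as stated — if $2\pi k$ sits near $\mathcal{Z}_{h}(1)$, the left endpoint of $\mathcal{Y}_{h}([-1,1])\cap\mathcal{Z}_{h}([-1,1])$, then $2\pi(k-1)$ may lie below $\mathcal{Y}_{h}(1)$, since the shift $\mathcal{Z}_{h}(1)-\mathcal{Y}_{h}(1)$ is strictly less than $2\pi$. The diameter being large does not resolve this. However, this is a purely boundary issue: the inequality $\beta_{k}(h)<\alpha_{k-1}(h)$ is only meaningful when $\alpha_{k-1}(h)$ exists, i.e. $k-1\in I_{h}$, and the paper's proof also silently assumes this (its $\zeta_{k}$ is undefined otherwise), so you should simply state that hypothesis rather than try to derive it.
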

\begin{proof}
Pour tout $k\in\frac{\mathcal{Y}_{h}([-1,1])\cap\mathcal{Z}_{h}([-1,1])}{2\pi}\cap\mathbb{Z}$,
considérons les deux réels : \[
\theta_{k}:=\mathcal{B}_{h}(2\pi k)\in[-1,1]\]
et \[
\zeta_{k}:=\mathcal{A}_{h}(2\pi k)-\mathcal{A}_{h}(2\pi(k-1))<0.\]
Alors comme :\[
\mathcal{Y}_{h}(\theta_{k}+\zeta_{k})-\mathcal{Z}_{h}(\theta_{k})\]
\[
=f_{h}(\theta_{k}+\zeta_{k})-f_{h}(\theta_{k})-\arccos\left(\frac{\cos\left(g_{h}(\theta_{k}+\zeta_{k})\right)}{\sqrt{1+\exp\left(2\pi\varepsilon(\lambda h^{\alpha})/h\right)}}\right)-\arccos\left(\frac{\cos\left(g_{h}(\theta_{k})\right)}{\sqrt{1+\exp\left(2\pi\varepsilon(\lambda h^{\alpha})/h\right)}}\right)\]
\[
=f_{h}(\theta_{k}+\zeta_{k})-f_{h}(\theta_{k})+O(1)\]
\[
=\underbrace{f_{h}^{\prime}(\tau_{k})\zeta_{k}+O(1)}_{>0\;(\textrm{car}\,\zeta_{k}<0)}\]
où $\tau_{k}$ est donné par le théorème des accroissement finis,
on a que :

\[
\mathcal{Y}_{h}(\theta_{k}+\zeta_{k})>\mathcal{Z}_{h}(\theta_{k})\]
ie : \[
\mathcal{Y}_{h}(\mathcal{B}_{h}(2\pi k)+\zeta_{k})>2\pi k.\]
D'où en appliquant la fonction $\mathcal{A}_{h}$ (qui est strictement
décroissante) nous obtenons alors:

\[
\mathcal{B}_{h}(2\pi k)+\zeta_{k}<\mathcal{A}_{h}(2\pi k)\]
ie :

\[
\mathcal{B}_{h}(2\pi k)+\mathcal{A}_{h}(2\pi k)-\mathcal{A}_{h}(2\pi(k-1))<\mathcal{A}_{h}(2\pi k)\]
soit encore 

\[
\beta_{k}(h)<\alpha_{k-1}(h).\]
Ce qui montre l'inégalité proposée dans l'énoncé.
\end{proof}
Pour finir, estimons la distance entre les valeurs propres :

\begin{prop}
Il existent $C$ et $C^{\prime}$ deux nombres réels strictement positifs
et indépendant de $h$ tels que :\[
C\frac{h}{\left|\ln(h)\right|}\leq\left|\alpha_{k+1}(h)-\alpha_{k}(h)\right|\leq C^{\prime}\frac{h}{\left|\ln(h)\right|}.\]
De même pour la distance $\left|\beta_{l+1}(h)-\beta_{l}(h)\right|.$
\end{prop}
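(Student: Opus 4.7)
Le plan consiste � exploiter la Proposition 4.4, qui fournit d�j� un encadrement fin de $\mathcal{Y}_h'$, en le combinant avec le th�or�me des accroissements finis et la formule de d�rivation d'une fonction r�ciproque. Pour deux indices cons�cutifs $k,k+1 \in I_h$ je commencerais par �crire
\[
\alpha_{k+1}(h) - \alpha_k(h) = h^{\alpha}\bigl(\mathcal{A}_h(2\pi(k+1)) - \mathcal{A}_h(2\pi k)\bigr) = 2\pi h^{\alpha}\, \mathcal{A}_h'(\xi_k),
\]
le r�el $\xi_k \in (2\pi k, 2\pi(k+1))$ �tant donn� par le TAF appliqu� � la fonction $\mathcal{A}_h = \mathcal{Y}_h^{-1}$, qui est $\mathcal{C}^{\infty}$ strictement d�croissante sur son domaine $\mathcal{Y}_h([-1,1])$ (par la Proposition 4.4). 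En posant $\eta_k := \mathcal{A}_h(\xi_k) \in [-1,1]$, la formule de d�rivation de la r�ciproque donne alors
\[
|\alpha_{k+1}(h) - \alpha_k(h)| = \frac{2\pi h^{\alpha}}{|\mathcal{Y}_h'(\eta_k)|}.
\]

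Il suffit ensuite d'injecter l'encadrement de la Proposition 4.4. Puisque $\ln h < 0$ pour $h$ petit et $\alpha \in \left[\frac{1}{2},1\right[$, en passant aux valeurs absolues et en absorbant le reste $O(h^{\alpha-1})$ dans le terme principal $h^{\alpha-1}|\ln h|$ (gr�ce � $|\ln h| \to +\infty$), on obtient
\[
\frac{\alpha\, h^{\alpha-1}|\ln(h)|}{\sqrt{-V''(0)}}(1+o(1)) \leq |\mathcal{Y}_h'(\eta_k)| \leq \frac{h^{\alpha-1}|\ln(h)|}{\sqrt{-V''(0)}}(1+o(1)).
\]
En passant � l'inverse puis en multipliant par $2\pi h^{\alpha}$, il vient
\[
\frac{2\pi\sqrt{-V''(0)}}{|\ln(h)|}\, h\, (1+o(1)) \leq |\alpha_{k+1}(h) - \alpha_k(h)| \leq \frac{2\pi\sqrt{-V''(0)}}{\alpha\, |\ln(h)|}\, h\, (1+o(1)),
\]
ce qui fournit les constantes $C$ et $C'$ recherch�es pour $h$ suffisamment petit. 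Le raisonnement pour $|\beta_{l+1}(h) - \beta_l(h)|$ est strictement identique en rempla�ant le couple $(\mathcal{Y}_h,\mathcal{A}_h)$ par $(\mathcal{Z}_h,\mathcal{B}_h)$.

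Ce plan ne pr�sente pas d'obstacle s�rieux : toute la difficult� analytique a d�j� �t� encaiss�e dans la d�monstration de la Proposition 4.4, et la preuve ne fait qu'exploiter m�caniquement cet encadrement via le TAF et la d�riv�e r�ciproque. Le seul point m�ritant attention est l'uniformit� en $k$ des termes $o(1)$, qui d�coule directement du caract�re uniforme en $\lambda \in [-1,1]$ des bornes de la Proposition 4.4 ; de m�me, il convient de v�rifier au passage que $[2\pi k, 2\pi(k+1)] \subset \mathcal{Y}_h([-1,1])$ d�s que $k,k+1 \in I_h$, ce qui est imm�diat puisque $\mathcal{Y}_h([-1,1])$ est un intervalle.
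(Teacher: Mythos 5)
Votre démonstration est correcte et suit exactement la même route que celle du papier : accroissements finis appliqués à $\mathcal{A}_h$, dérivée de la fonction réciproque $|\mathcal{A}_h'| = 1/|\mathcal{Y}_h'\circ\mathcal{A}_h|$, puis injection de l'encadrement de la Proposition 4.4 et absorption des restes $O(h^{\alpha-1})$ dans le terme principal $h^{\alpha-1}|\ln h|$. Les remarques finales sur l'uniformité en $k$ et l'inclusion $[2\pi k, 2\pi(k+1)]\subset \mathcal{Y}_h([-1,1])$ sont justes, quoique implicites dans le papier.
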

\begin{proof}
Or pour tout indice $k\in\frac{\mathcal{Y}_{h}([-1,1])\cap\mathcal{Z}_{h}([-1,1])}{2\pi}\cap\mathbb{Z}$,
on a : \[
\left|\alpha_{k+1}(h)-\alpha_{k}(h)\right|=h^{\alpha}\left|\mathcal{A}_{h}\left(2\pi(k+1)\right)-\mathcal{A}_{h}\left(2\pi k\right)\right|\]
\[
=h^{\alpha}\left|\mathcal{A}_{h}^{\prime}(\xi_{k})2\pi\right|\]
 où $\xi_{k}\in\left]k,k+1\right[$ est donné par le théorème des
accroissements finis. Il reste alors à écrire simplement que :\[
\left|\mathcal{A}_{h}^{\prime}(\xi_{k})\right|=\left|\frac{1}{\mathcal{Y}_{h}^{\prime}(\mathcal{A}_{h}(\xi_{k}))}\right|\]
pour avoir l'encadrement suivant :\[
\frac{2\pi h^{\alpha}}{\frac{h^{\alpha-1}\left|\ln(h)\right|}{\sqrt{-V^{''}(0)}}+O(h^{\alpha-1})}\leq\left|\alpha_{k+1}(h)-\alpha_{k}(h)\right|\leq\frac{2\pi h^{\alpha}}{\frac{\alpha h^{\alpha-1}\left|\ln(h)\right|}{\sqrt{-V^{''}(0)}}+O(h^{\alpha-1})}.\]
Ensuite il reste juste a noter que : \[
\frac{h^{\alpha}}{h^{\alpha-1}\left|\ln(h)\right|+O(h^{\alpha-1})}=\frac{h}{\left|\ln(h)\right|+O(1)}\]
\[
=\frac{h}{\left|\ln(h)\right|}\frac{1}{1+O\left(\frac{1}{\left|\ln(h)\right|}\right)}\]
\[
=\frac{h}{\left|\ln(h)\right|}\left(1+O\left(\frac{1}{\left|\ln(h)\right|}\right)\right)\]
et comme pour $h$ assez petit $h/\ln(h)^{2}\ll h/\left|\ln(h)\right|$
on démontre la proposition 4.12.
\end{proof}
En résumant toute cette partie 4, on a bien montré le théorème 4.1.

\subsection{Quelques remarques}

Pour finir, on va donner deux remarques, la première est technique
et concerne le diamètre du compact où le théorème 4.1 est valide.
Dans la seconde remarque on tente de donner un panorama global sur
le spectre du double puits à l'aide des résultats connus sur le haut
de spectre \textbf{{[}21]}, \textbf{{[}11]} et sur le bas de spectre
\textbf{{[}20]}. Quelques tracés numériques sont aussi proposés.

\subsection*{Une remarque technique}

Dans la preuve du théorème 4.1 on a vu la nécessité technique d'avoir
supposé $\alpha\geq\frac{1}{2}$ (voir en particulier les majorants
4.5 et 4.6). Cependant, malgré cette hypothèse, le théorème 4.1 reste
assez intéressant, notamment en vu d'applications: par exemple pour
l'étude de la dynamique quantique d'un paquet d'ondes, en effet la
taille $\sqrt{h}$ est (modulo un facteur multiplicatif) la taille
d'un boule d'aire $h$, c'est à dire en physique la taille d'un quanta%
\footnote{Par exemple les états cohérents.%
} .

\subsection*{Une remarque sur le spectre global du double puits}

Pour fixer les idées, on va supposer que le double puits est symétrique
(la fonction $V$ est paire), le bas du spectre est alors uniquement
constitué de quasi-doublets%
\footnote{En fait les valeurs propres sont toutes simples, mais la présence
des deux puits induit deux spectres exponentiellement proches l'un
de l'autre; voir\textbf{ {[}20]}.%
} de valeurs propres distant l'un de l'autre de $h$. Le haut de spectre
est constitué de valeurs propres régulièrement espacées de taille
$h$ (voir \textbf{{[}21]}, \textbf{{[}11]}). Sur la figure 7 on voit
le passage du spectre quasi-double correspondant au bas du spectre
au spectre simple correspondant au haut du spectre. On voit aussi
que les valeurs propres se resserrent entre elles au passage du maximun
local et s'écartent lorsque on monte vers le haut du spectre. 

\begin{center}
\includegraphics[scale=0.45]{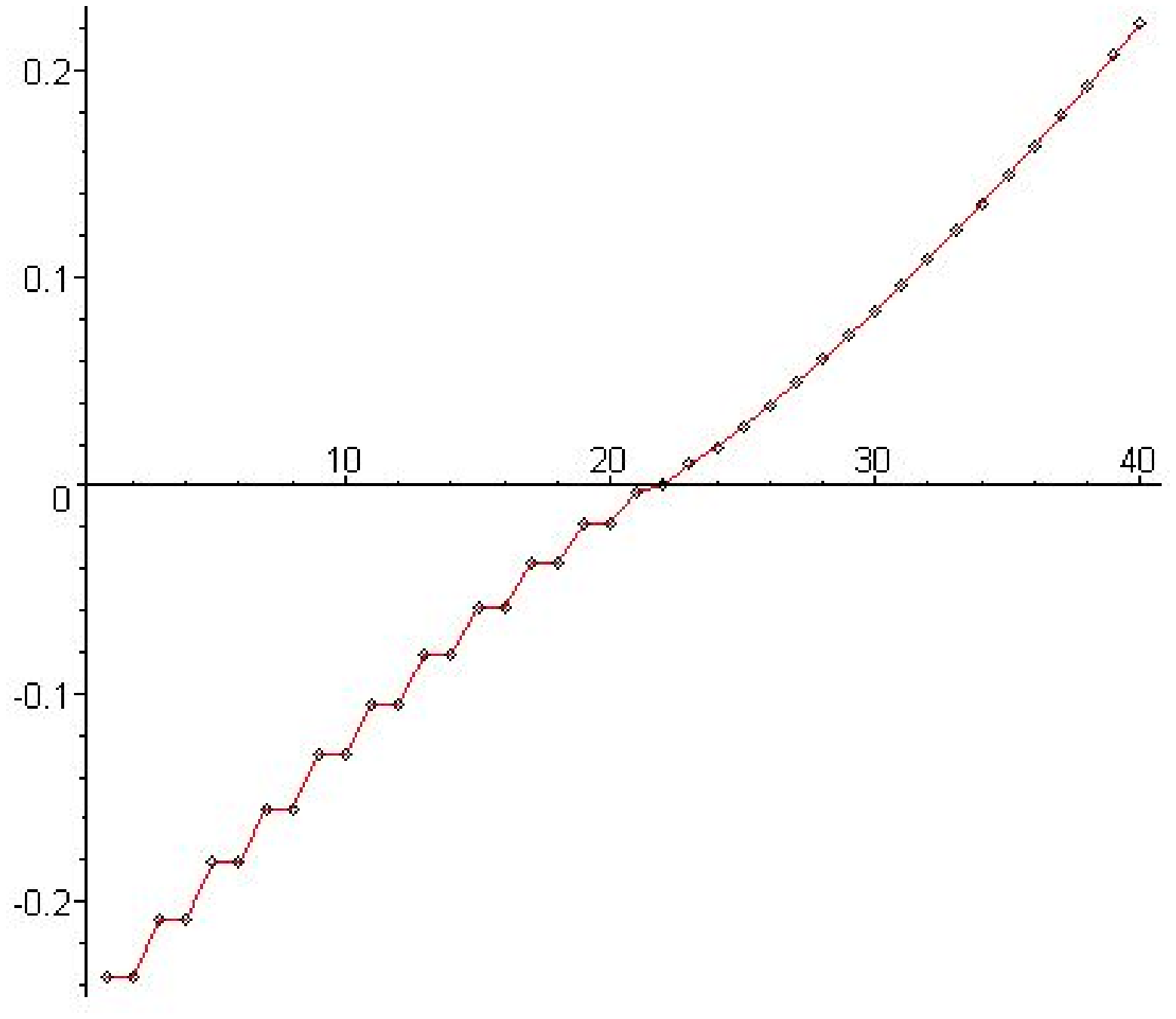}
\par\end{center}

\begin{center}
\textit{Fig. 7: Tracé numérique d'une partie du spectre du double
puits symétrique : sur l'axe des abscisses on trouve un indexage des
valeurs propres, et sur l'axe des ordonnées on trouve les valeurs
propres. }
\par\end{center}

\vspace{+0.25cm}

La figure 8 décrit la différence entres 2 valeurs propres consécutives
: en passant du bas au haut du spectre on voit que l'oscillation induite
par le phénomène de quasi-doublets diminue jusqu'à disparaître. Sur
cette même figure 7 on distingue aussi très bien le resserrement {}``logarithmique''
des valeurs propres au passage du maximun local, puis l'écartement,
lui aussi {}``logarithmique'', des valeurs propres quand on remonte
dans la haut du spectre.

\begin{center}
\includegraphics[scale=0.32]{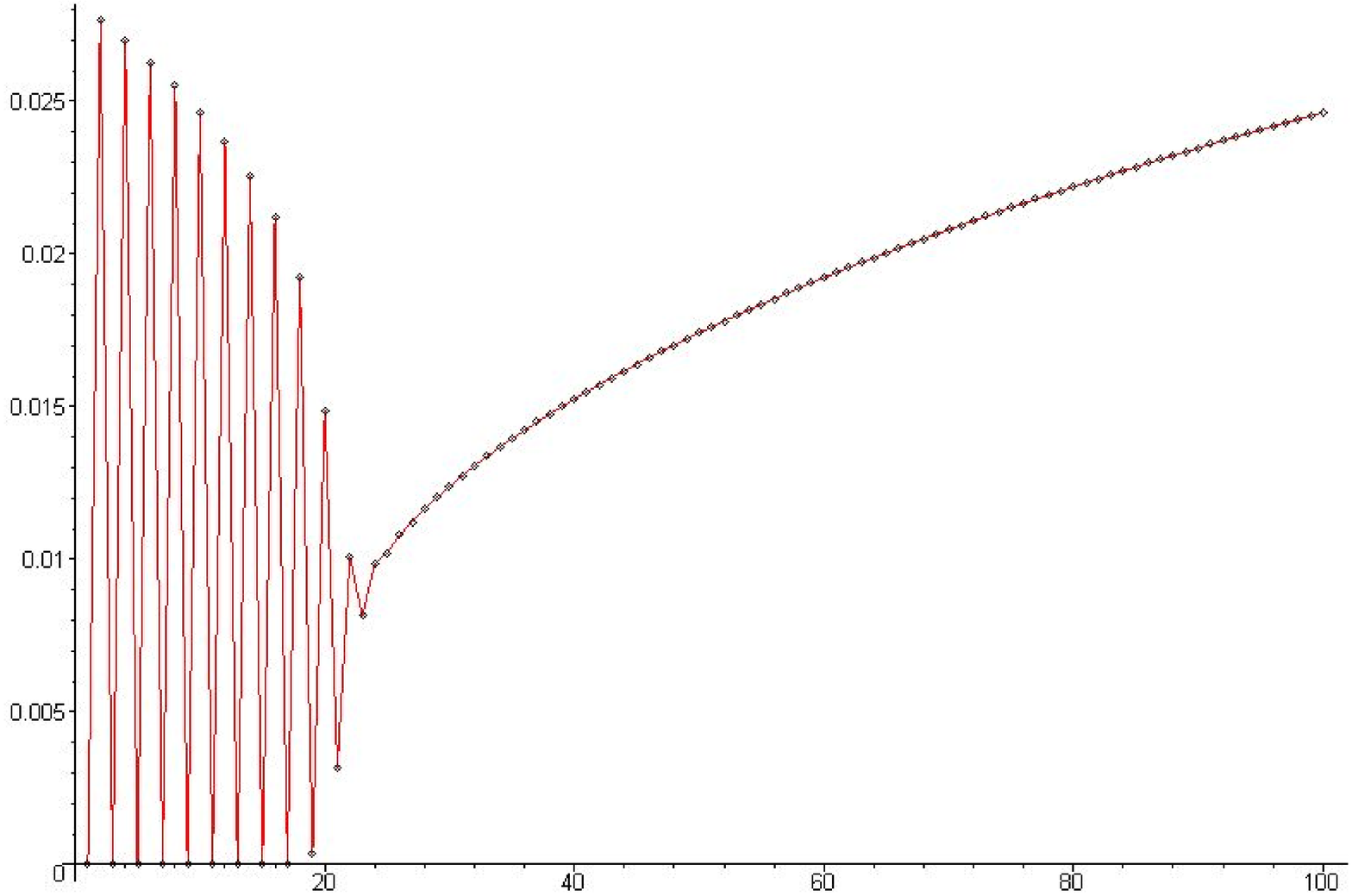}
\par\end{center}

\begin{center}
\textit{Fig. 8: Tracé numérique de la différence entres 2 valeurs
propres consécutives dans le cas du double puits symétrique : sur
l'axe des abscisses on trouve un indexage des valeurs propres, et
sur l'axe des ordonnées on trouve la différence entres 2 valeurs propres
consécutives.}
\par\end{center}

\vspace{+0.25cm}

Cette article donne le train d'union entre le bas et le haut de spectre
: même si les transitions restent mathématiquement délicates à écrire,
on peut imaginer qu'en partant du bas de spectre et en montant vers
le maximun local, la quinconce exponentiel des quasi-doublets augmente
jusqu'à apparaître%
\footnote{Il faut dire qu'en méthode semi-classique la distance exponentiellement
petite est un $O(h^{\infty})$, donc non visible. %
} clairement. Dans le même temps, toutes les valeurs propres se resserrent
(passage de la distance spectrale $h$ à $h/\left|\ln(h)\right|$).
Ensuite quand on continue de monter du maximun vers le haut de spectre,
il faut la aussi imaginer que la quinconce devient équidistante et
que les valeurs propres s'écartent (passage de la distance $h/\left|\ln(h)\right|$
à $h$ ).

\section{Annexe }

Le but de cette annexe est de montrer de manière détaillée, à l'aide
des distributions, le lemme 3.8 (voir aussi\textbf{ {[}19]}).

\subsection{Distributions tempérées holomorphes}

\begin{defn}
Soit $U$ un ouvert non vide de $\mathbb{C}$, et considérons l'application
:\[
T_{.}\,:\,\left\{ \begin{array}{cc}
U\rightarrow\mathcal{S}^{\prime}(\mathbb{R})\\
\\\lambda\mapsto T_{\lambda}.\end{array}\right.\]
On dira que $T_{\lambda}$ est holomorphe sur $U$ si et seulement
si pour tout $\varphi\in\mathcal{S}(\mathbb{R})$ la fonction $\lambda\mapsto\left\langle T_{\lambda},\varphi\right\rangle _{\mathcal{S}^{\prime},\mathcal{S}}$
est holomorphe sur $U$.
\end{defn}

\subsection{Les distributions $\left[x_{+}^{\lambda}\right]$ et $\left[x_{-}^{\lambda}\right]$ }

Soit $\lambda\in\mathbb{C}$ tels que Re$(\lambda)>-1$, on définit
alors les fonctions $x_{+}^{\lambda}$ et $x_{-}^{\lambda}$ par :
$x_{+}^{\lambda}:=\mathbf{1}_{\mathbb{R}_{+}^{*}}(x)x^{\lambda}$
et $x_{-}^{\lambda}:=\mathbf{1}_{\mathbb{R}_{-}^{*}}(x)|x|^{\lambda}$.
Comme Re$(\lambda)>-1$ on vérifie sans peine que $x_{+}^{\lambda}$
et $x_{-}^{\lambda}$ sont dans $L_{\textrm{loc}}^{1}(\mathbb{R})$
et que les distributions $\left[x_{+}^{\lambda}\right]$ et $\left[x_{-}^{\lambda}\right]$
sont holomorphes sur $\left\{ z\in\mathbb{C},\,\textrm{Re}(z)>-1\right\} $.

\begin{prop}
Les distributions $\left[x_{+}^{\lambda}\right]$ et $\left[x_{-}^{\lambda}\right]$
admettent toutes les deux un prolongement holomorphe à $\mathbb{C}-\mathbb{Z}_{-}^{*}$.
\end{prop}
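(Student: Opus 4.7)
Le plan est de prolonger la distribution $\left[x_{+}^{\lambda}\right]$ par étapes, en soustrayant à la fonction test son développement de Taylor à l'origine de manière à gagner une unité à chaque étape sur la partie réelle autorisée de $\lambda$. Les pôles aux entiers strictement négatifs apparaîtront naturellement lors de ce procédé, et le cas de $\left[x_{-}^{\lambda}\right]$ s'en déduira immédiatement par un changement de variable.

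Premièrement, pour $\textrm{Re}(\lambda)>-1$ la distribution $\left[x_{+}^{\lambda}\right]$ est déjà définie par intégration. Pour $N\in\mathbb{N}^{*}$ fixé et pour $\varphi\in\mathcal{S}(\mathbb{R})$, je proposerai l'expression, valide a priori sur $\left\{\textrm{Re}(\lambda)>-1\right\}$ :
$$\left\langle \left[x_{+}^{\lambda}\right],\varphi\right\rangle =\int_{0}^{1}x^{\lambda}\left(\varphi(x)-\sum_{k=0}^{N-1}\frac{\varphi^{(k)}(0)}{k!}x^{k}\right)dx+\sum_{k=0}^{N-1}\frac{\varphi^{(k)}(0)}{k!(\lambda+k+1)}+\int_{1}^{+\infty}x^{\lambda}\varphi(x)\,dx.$$
Par la formule de Taylor avec reste intégral, la parenthèse dans le premier terme est un $O(x^{N})$ au voisinage de $0$, ce qui rend cette intégrale convergente et holomorphe en $\lambda$ sur $\left\{\textrm{Re}(\lambda)>-N-1\right\}$. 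L'intégrale sur $[1,+\infty[$ est entière en $\lambda$ grâce à la décroissance rapide de $\varphi$. La somme finie centrale est méromorphe sur $\mathbb{C}$, à pôles simples en $\lambda\in\{-1,-2,\ldots,-N\}$. On obtient ainsi un prolongement méromorphe de $\left[x_{+}^{\lambda}\right]$ sur le demi-plan $\left\{\textrm{Re}(\lambda)>-N-1\right\}$ avec pour seules singularités $\{-1,\ldots,-N\}$.

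Deuxièmement, je ferai varier $N$ : par unicité du prolongement holomorphe, les expressions obtenues pour deux valeurs distinctes de $N$ coïncident sur leur domaine commun, puisqu'elles coïncident déjà sur $\left\{\textrm{Re}(\lambda)>-1\right\}$. Ce recollement définit sans ambiguïté $\left[x_{+}^{\lambda}\right]$ comme distribution tempérée pour tout $\lambda\in\mathbb{C}-\mathbb{Z}_{-}^{*}$, et la dépendance en $\lambda$ est holomorphe par construction. Le caractère tempéré résulte de ce que chaque terme se majore par une semi-norme de Schwartz de $\varphi$, uniformément sur les compacts en $\lambda$. Pour $\left[x_{-}^{\lambda}\right]$ je procéderai par l'identité $\left\langle \left[x_{-}^{\lambda}\right],\varphi\right\rangle =\left\langle \left[x_{+}^{\lambda}\right],\check{\varphi}\right\rangle $ où $\check{\varphi}(x):=\varphi(-x)$, ce qui transpose automatiquement le prolongement précédent.

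Le point délicat à soigner sera la vérification de l'holomorphie en $\lambda$ de $\left\langle \left[x_{+}^{\lambda}\right],\varphi\right\rangle$ sur chaque composante connexe de $\mathbb{C}-\mathbb{Z}_{-}^{*}$ pour $\varphi$ fixée : il s'agira d'appliquer le théorème de dérivation sous l'intégrale au premier terme, avec une domination uniforme sur les compacts en $\lambda$, en utilisant $\partial_{\lambda}(x^{\lambda})=\ln(x)\,x^{\lambda}$ et le fait que $x^{\lambda}\ln(x)$ reste intégrable près de $0$ tant que $\textrm{Re}(\lambda)+N>-1$.
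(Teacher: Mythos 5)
Votre preuve suit essentiellement la même démarche que l'article : prolongement par soustraction du développement de Taylor de la fonction test à l'origine, la formule proposée étant (à un décalage d'indice près) identique à celle de l'article, avec les mêmes trois morceaux dont on contrôle séparément l'holomorphie. Vous êtes un peu plus explicite que l'article sur deux points secondaires mais bienvenus : le recollement des expressions pour différents $N$ par unicité du prolongement holomorphe, et le traitement de $\left[x_{-}^{\lambda}\right]$ via $\check{\varphi}(x)=\varphi(-x)$, que l'article laisse implicite.
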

\begin{proof}
Pour le moment la distribution $\left[x_{+}^{\lambda}\right]$ n'a
de sens que pour $\lambda\in\mathbb{C}$ tels que $\textrm{Re}(\lambda)>-1$.
Alors comme pour tout $\varphi\in\mathcal{S}(\mathbb{R})$ nous avons
que\[
\left\langle \left[x_{+}^{\lambda}\right],\varphi\right\rangle _{\mathcal{S}^{\prime},\mathcal{S}}={\displaystyle \int_{0}^{+\infty}}x^{\lambda}\varphi(x)\, dx\]
\[
={\displaystyle \int_{0}^{1}}x^{\lambda}\left(\varphi(x)-\varphi(0)\right)\, dx+{\displaystyle \int_{1}^{+\infty}}x^{\lambda}\varphi(x)\, dx+\frac{\varphi(0)}{\lambda+1}.\]
Il est clair que $\lambda\mapsto\frac{\varphi(0)}{\lambda+1}$ est
holomorphe sur $\mathbb{C}-\left\{ -1\right\} $, que $\lambda\mapsto{\displaystyle \int_{1}^{+\infty}}x^{\lambda}\varphi(x)\, dx$
est holomorphe sur $\mathbb{C}$, et que $\lambda\mapsto{\displaystyle \int_{0}^{1}}x^{\lambda}\left(\varphi(x)-\varphi(0)\right)\, dx$
est absolument convergente pour $\lambda\in\mathbb{C}$ tels que $\textrm{Re}(\lambda)>-2$,
ainsi l'égalité précédente est vrai pour $\lambda\in\mathbb{C}-\left\{ -1\right\} $
tels que $\textrm{Re}(\lambda)>-2$ , on vient donc de définir $\left[x_{+}^{\lambda}\right]$
pour $\lambda\in\mathbb{C}-\left\{ -1\right\} $ tels que $\textrm{Re}(\lambda)>-2$.
Itérons ce procédé: en écrivant que \[
\left\langle \left[x_{+}^{\lambda}\right],\varphi\right\rangle _{\mathcal{S}^{\prime},\mathcal{S}}={\displaystyle \int_{0}^{+\infty}}x^{\lambda}\varphi(x)\, dx\]
\[
={\displaystyle \int_{0}^{1}}x^{\lambda}\left(\varphi(x)-\varphi(0)-x\varphi\prime(0-...-\frac{x^{n-1}}{(n-1)!}\varphi^{(n-1)}(0)\right)\, dx\]
\[
+{\displaystyle \int_{1}^{+\infty}}x^{\lambda}\varphi(x)\, dx+{\displaystyle \sum_{k=1}^{n}}\frac{\varphi^{(k-1)}(0)}{(k-1)!(\lambda+k)}.\]
Avec le même procédé qu'avant, on peut définir la distribution $\left[x_{+}^{\lambda}\right]$
pour $\lambda\in\mathbb{C}-\left\{ -1,-2,\cdots,-n\right\} $ et tels
que $\textrm{Re}(\lambda)>-n-1$. Ainsi par récurrence on peut définir
la distribution $\left[x_{+}^{\lambda}\right]$ pour tout $\lambda\in\mathbb{C}-\left\{ \mathbb{Z}_{-}^{*}\right\} $. 
\end{proof}

\subsection{Les distributions $\left(x+i0\right)^{\lambda}$ et $\left(x-i0\right)^{\lambda}$}

Soit $\lambda\in\mathbb{C}$ et pour tout $(x,y)\in\mathbb{R}^{2}$
on définit la fonction $(x+iy)^{\lambda}$ pour tout $(x,y)\in\mathbb{R}^{2}$
par :\[
(x+iy)^{\lambda}:=e^{\lambda\ln(x+iy)}=e^{\lambda\ln|x^{2}+y^{2}|+\lambda i\arg(x+iy)}.\]
Ainsi comme $z=x+iy\mapsto z^{\lambda}=(x+iy)^{\lambda}$ est holomorphe
sur $U_{\pi}:=\mathbb{C}-\mathbb{R}_{-}$, on va s'intéresser aux
limites quand on s'approche de l'axe des réels par le haut et par
le bas de l'axe; on a simplement que :\[
(x+i0)^{\lambda}=\lim_{y\rightarrow0^{+}}(x^{2}+y^{2})^{\frac{\lambda}{2}}e^{\lambda i\arg(x+iy)}=\left\{ \begin{array}{cc}
x^{\lambda}\;\; si\;\; x\geq0\\
\\|x|^{\lambda}e^{\lambda i\pi}\;\; si\;\; x\leq0\end{array}\right.\]

et\[
(x-i0)^{\lambda}=\lim_{y\rightarrow0^{-}}(x^{2}+y^{2})^{\frac{\lambda}{2}}e^{\lambda i\arg(x+iy)}=\left\{ \begin{array}{cc}
x^{\lambda}\;\; si\;\; x\geq0\\
\\|x|^{\lambda}e^{-\lambda i\pi}\;\; si\;\; x\leq0.\end{array}\right.\]
Ces deux nouvelles fonctions $(x+i0)^{\lambda}$ et $(x-i0)^{\lambda}$
sont bien définies sur tout $\mathbb{C}$ et donc pour tout $\lambda\in\mathbb{C}$
tels que Re$(\lambda)>-1$ ces fonctions peuvent aussi s'écrirent
:\[
(x+i0)^{\lambda}=x_{+}^{\lambda}+e^{\lambda i\pi}x_{-}^{\lambda}\textrm{ et }(x-i0)^{\lambda}=x_{-}^{\lambda}+e^{-\lambda i\pi}x_{+}^{\lambda}.\]
Ainsi pour tout $\lambda\in\mathbb{C}$ tels que Re$(\lambda)>-1$,
au sens des distributions nous avons que :\[
(x+i0)^{\lambda}=\left[x_{+}^{\lambda}\right]+e^{\lambda i\pi}\left[x_{-}^{\lambda}\right]\textrm{ et }(x-i0)^{\lambda}=\left[x_{-}^{\lambda}\right]+e^{-\lambda i\pi}\left[x_{+}^{\lambda}\right].\]
Avec la proposition 5.2 on peut définir un prolongement holomorphe
de $\left(x+i0\right)^{\lambda}$ et de $\left(x-i0\right)^{\lambda}$
à $\mathbb{C}-\mathbb{Z}_{-}^{*}$.

\subsection{Calcul de la transformée de Fourier de $\left[x_{+}^{\lambda}\right]$
et de $\left[x_{-}^{\lambda}\right]$ }

On va démontrer le lemme 3.9 : soit $\lambda\in\mathbb{C}$ tels que
$\textrm{Re}(\lambda)\in\left]0,1\right[$. Soit $\tau>0$ pour tout
$x\in\mathbb{R}$ nous avons 

\textit{\[
\mathcal{F}_{1}\left(t_{+}^{\lambda}e^{-\tau t}\right)(x)=\frac{1}{\sqrt{2\pi}}{\displaystyle \int_{0}^{+\infty}t^{\lambda}e^{-\tau t}e^{-ixt}\, dt}\]
\begin{equation}
=\frac{1}{\sqrt{2\pi}}{\displaystyle \int_{0}^{+\infty}t^{\lambda}e^{ist}\, dt}\label{eq:}\end{equation}
}où on a posé $s:=-x+\tau i$.

Comme Im(s)$>0,$ l'intégrale (5.1) converge absolument et on peut
voir avec le théorème de convergence dominée que la distribution $\left[x_{+}^{\lambda}e^{-\tau x}\right]$
converge dans $\mathcal{S}^{\prime}(\mathbb{R})$ vers la distribution
$\left[x_{+}^{\lambda}\right]$ quand $\tau\rightarrow0$, ainsi par
continuité de la transformée de Fourier la distribution $\mathcal{F}_{1}\left(\left[x_{+}^{\lambda}e^{-\tau x}\right]\right)$
converge dans $\mathcal{S}^{\prime}(\mathbb{R})$ vers la distribution
$\mathcal{F}_{1}\left(\left[x_{+}^{\lambda}\right]\right)$ quand
$\tau\rightarrow0$.

On va maintenant calculer l'intégrale (5.1) : en faisant le changement
de variable $u:=-ist$, avec $s=-x+\tau i$, où $\tau>0$, ainsi $\arg(s)\in\left]0,\pi\right[$
on a:\[
{\displaystyle \int_{0}^{+\infty}t^{\lambda}e^{ist}\, dt}=\left(\frac{i}{s}\right)^{\lambda+1}{\displaystyle \int_{L}u^{\lambda}e^{-u}\, du}\]
où $L$ est la demi-droite partant de $0$ et d'angle $\arg(-is)$.
Notons bien que $\arg(-ist)=\arg(s)-\frac{\pi}{2}$, donc $\arg(-ist)\in\left]-\frac{\pi}{2},\frac{\pi}{2}\right[.$
Maintenant montrons que :\[
{\displaystyle \int_{L}u^{\lambda}e^{-u}\, du}={\displaystyle \int_{0}^{+\infty}x^{\lambda}e^{-x}\, dx.}\]
Pour cela soit $0<\epsilon<R$, et considérons le lacet orienté $\gamma_{\epsilon,R}$
du plan complexe défini par (voir Figure suivante) :

\begin{center}
\includegraphics[scale=0.5]{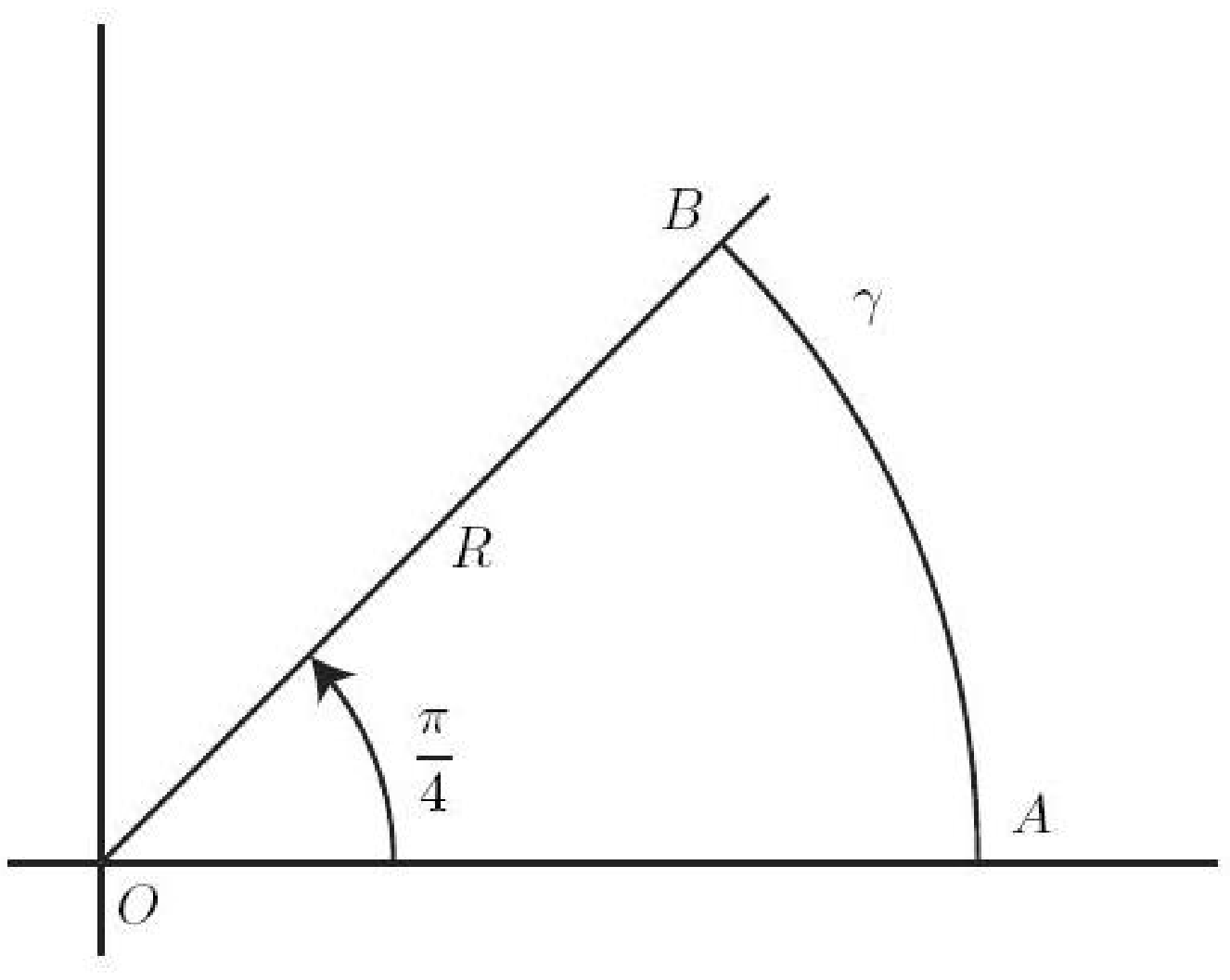}
\par\end{center}

\begin{center}
\textit{Fig. 9: Chemin d'intégration $\gamma_{\epsilon,R}:=L_{\epsilon,R}\cup C_{R}\cup\left[R,\epsilon\right]\cup C_{\epsilon}$.}
\par\end{center}

\vspace{+0.25cm}

Alors comme $f\,:\, z\mapsto z^{\lambda}e^{-z}=e^{\lambda\ln(z)}e^{-z}$
est holomorphe sur tout ouvert de $\left\{ z\in\mathbb{C},\,\textrm{Re}(z)>0\right\} $
par le théorème de Cauchy on a d'une part que :\[
{\displaystyle \int_{\gamma_{\epsilon,R}}f(z)\, dz}=0\]
et d'autre part en décomposant le lacet on obtient l'égalité suivante
:

\textit{\begin{equation}
{\displaystyle \int_{\gamma_{\epsilon,R}}f(z)\, dz}={\displaystyle \int_{L_{\epsilon,R}}f(z)\, dz}+{\displaystyle \int_{C_{R}}f(z)\, dz}+{\displaystyle \int_{R}^{\epsilon}f(x)\, dx}+{\displaystyle \int_{C_{\epsilon}}f(z)\, dz.}\label{eq:}\end{equation}
}Or avec le changement de variable $z:=Re^{i\theta}$ dans la seconde
intégrale de (5.2) nous avons

\[
{\displaystyle \int_{C_{R}}f(z)\, dz}=-i{\displaystyle \int_{0}^{\arg(-is)}R^{\lambda+1}e^{i\lambda\theta}e^{-Re^{i\theta}}\, d\theta}\]
or \[
\left|R^{\lambda+1}e^{i\lambda\theta}e^{-Re^{i\theta}}\right|\leq R^{Re(\lambda)+1}e^{-R\cos(\theta)}\]
donc, comme $\arg(-is)\in\left]-\frac{\pi}{2},\frac{\pi}{2}\right[$,
pour tout $\theta\in\left[0,\arg(-is)\right],\,\cos(\theta)>0$ ,
ainsi

\[
{\displaystyle \lim_{R\rightarrow+\infty}}R^{Re(\lambda)+1}e^{-R\cos(\theta)}=0\]
d'où par convergence dominée sur un compact :\[
{\displaystyle \lim_{R\rightarrow+\infty}}{\displaystyle \int_{C_{R}}f(z)\, dz}=0.\]
Avec le changement de variable $z:=\epsilon e^{i\theta}$ dans la
quatrième intégrale de (5.2) nous avons

\[
{\displaystyle \int_{C_{\epsilon}}f(z)\, dz}=i{\displaystyle \int_{0}^{\arg(-is)}\epsilon^{\lambda+1}e^{i\lambda\theta}e^{-\epsilon e^{i\theta}}\, d\theta}\]
donc\[
\left|{\displaystyle \int_{C_{\epsilon}}f(z)\, dz}\right|\leq\epsilon^{Re(\lambda)+1}|\arg(-is)|\]
et comme Re$(\lambda)+1>0$

\[
{\displaystyle \lim_{\epsilon\rightarrow0^{+}}}\epsilon^{Re(\lambda)+1}|\arg(-is)|=0\]
on a 

\[
{\displaystyle \lim_{\epsilon\rightarrow0^{+}}}{\displaystyle \int_{C_{\epsilon}}f(z)\, dz}=0.\]
Enfin par le théorème de Cauchy, et comme (5.1) converge absolument,
en faisant tendre $\epsilon\rightarrow0$ et $R\rightarrow+\infty$,
l'égalité est valable pour tout $\lambda\in\mathbb{C}$ tels que $-1<$Re$(\lambda)<0$\[
0={\displaystyle \int_{L}u^{\lambda}e^{-u}\, dt}+{\displaystyle \int_{+\infty}^{0}x^{\lambda}e^{-x}\, dx}\]
donc 

\[
{\displaystyle \int_{L}u^{\lambda}e^{-u}\, dt}={\displaystyle \int_{0}^{+\infty}x^{\lambda}e^{-x}\, dx}=\Gamma(\lambda+1)\]
ie :\[
\mathcal{F}_{1}\left(t_{+}^{\lambda}e^{-\tau t}\right)(x)=\frac{1}{\sqrt{2\pi}}\left(\frac{i}{s}\right)^{\lambda+1}\Gamma(\lambda+1)\]
\[
=\frac{1}{\sqrt{2\pi}}\left(\frac{e^{i\frac{\pi}{2}}}{s}\right)^{\lambda+1}\Gamma(\lambda+1)\]
\[
=\frac{1}{\sqrt{2\pi}}\frac{ie^{i\frac{\pi}{2}\lambda}}{(-x+i\tau)^{\lambda+1}}\Gamma(\lambda+1).\]
Maintenant passons à la limite $(\tau\rightarrow0)$ dans $\mathcal{S}^{\prime}(\mathbb{R})$
on obtient :\[
\mathcal{F}_{1}\left(\left[t_{+}^{\lambda}\right]\right)(x)=\frac{1}{\sqrt{2\pi}}\frac{ie^{i\frac{\pi}{2}\lambda}}{(-x+i.0)^{\lambda+1}}\Gamma(\lambda+1).\]
Alors comme $\lambda\mapsto\frac{1}{\sqrt{2\pi}}\frac{ie^{i\frac{\pi}{2}\lambda}}{(-x+i.0)^{\lambda+1}}\Gamma(\lambda+1)$
est holomorphe sur $\mathbb{C}-\mathbb{Z}_{-}^{*}$ et que $\lambda\mapsto\mathcal{F}_{1}\left(\left[t_{+}^{\lambda}\right]\right)$
est holomorphe sur $\mathbb{C}-\mathbb{Z}^{*}$ , par un prolongement
holomorphe la précédente égalité reste vraie sur $\mathbb{C}-\mathbb{Z}^{*}$.
Et donc pour tout $\lambda\in\mathbb{C}-\mathbb{Z}^{*}$ nous avons

\[
\mathcal{F}_{1}\left(\left[t_{+}^{\lambda}\right]\right)(x)=\frac{1}{\sqrt{2\pi}}\frac{ie^{i\frac{\pi}{2}\lambda}}{(-x+i.0)^{\lambda+1}}\Gamma(\lambda+1)\]
et donc\[
\mathcal{F}_{1}\left(\left[t_{+}^{\lambda}\right]\right)(-x)=\frac{1}{\sqrt{2\pi}}\frac{ie^{i\frac{\pi}{2}\lambda}}{(x+i.0)^{\lambda+1}}\Gamma(\lambda+1)\]
ensuite comme

\[
(x+i.0)^{-\lambda-1}=\left[x_{+}^{-\lambda-1}\right]-e^{-\lambda i\pi}\left[x_{-}^{-\lambda-1}\right]\]
pour tout $\lambda\in\mathbb{C}-\mathbb{Z}^{*}$on a 

\[
\mathcal{F}_{1}\left(\left[t_{+}^{\lambda}\right]\right)(-x)=\frac{ie^{i\frac{\pi}{2}\lambda}\Gamma(\lambda+1)}{\sqrt{2\pi}}\left(\left[x_{+}^{-\lambda-1}\right]-e^{-\lambda i\pi}\left[x_{-}^{-\lambda-1}\right]\right)\]
d'où\[
\mathcal{F}_{1}\left(\left[t_{+}^{\lambda}\right]\right)(x)=\frac{ie^{i\frac{\pi}{2}\lambda}\Gamma(\lambda+1)}{\sqrt{2\pi}}\left(\left[x_{-}^{-\lambda-1}\right]-e^{-\lambda i\pi}\left[x_{+}^{-\lambda-1}\right]\right).\]

\vspace{1cm}

\hspace{-0.5cm}\textbf{\large Olivier Lablée}{\large \par}

\hspace{-0.5cm}

\hspace{-0.5cm}Université Grenoble 1-CNRS

\hspace{-0.5cm}Institut Fourier

\hspace{-0.5cm}UFR de Mathématiques

\hspace{-0.5cm}UMR 5582

\hspace{-0.5cm}BP 74 38402 Saint Martin d'Hères 

\hspace{-0.5cm}mail: \textcolor{blue}{lablee@ujf-grenoble.fr}

\hspace{-0.5cm}http://www-fourier.ujf-grenoble.fr/\textasciitilde{}lablee/
\end{document}